\author{Aleksandra Korzhenkova, Avelio Sep\'ulveda}
\date{\vspace{-5ex}}
\title{Vector-valued Gaussian free field conditioned to avoid a ball: Entropic repulsion of the norm and Freezing of spins}
\begin{document}
\maketitle

\pagenumbering{arabic}
\begin{abstract} 
 We study the laws of the two-dimensional vector-valued Dirichlet Gaussian free field and its massive lattice counterpart, conditioned to avoid a ball at every site of a subdomain. We  prove that, under this conditioning, the norm of the massless field exhibits entropic repulsion, while its angular components freeze at all mesoscopic scales. A key step in the analysis is showing that around any given point in the bulk of the range, the unconditioned field has no holes.

In the massive case, the conditioned field behaves differently: its norm remains uniformly bounded as the system size grows, leading to the existence of infinite-volume Gibbs measures. Furthermore, in the scalar massive case, the system undergoes a phase transition in the size of the avoided interval: for small intervals, the system admits a unique infinite-volume limit, while for sufficiently large centered intervals, multiple such limits exist. 
\end{abstract}



\section{Introduction}
\label{sec:intro} 
The behavior of spin systems is one of the subjects of study in probability theory and statistical physics. One of the most fundamental models in this context is the spin $O(N)$ model in two dimensions. This is a Gibbs measure on random functions from a graph $G \subseteq \Z^2$ to $\Ss^{N-1}$, whose energy at an edge is given by minus the inner product of the adjacent spins. Notable cases include the Ising model ($N=1$), the XY model ($N=2$), and the Heisenberg model ($N=3$).

The existence of a phase transition in the spin $O(N)$ model depends crucially on the parameter $N$. For $N=1$, the model undergoes a classical order–disorder phase transition: at high temperature it decorrelates exponentially, while at low temperature it is largely ordered, with spins at arbitrarily large distances likely to be aligned. For $N=2$, there is no classical phase transition, but instead a topological one: the system transitions between weak and strong disorder \cite{FS, vEL, AHPS}. In this case, correlations decay exponentially at high temperature, whereas at low temperature they decrease only at a polynomial rate. For $N \geq 3$, the low-temperature behavior remains a mystery. It is known that the model does not exhibit a classical phase transition and decorrelates exponentially at high temperature, but the rate of correlation decay in the low-temperature regime remains unknown. A key conjecture of Polyakov \cite{POLYAKOV197579} suggests that exponential decorrelation holds at all temperatures, which would imply the absence of a phase transition altogether.

The objective of this paper is to initiate a program to study the low-temperature behavior of the spin $O(N)$ model using the (massive) Gaussian free field (GFF) conditioned to have a norm, at every point, greater than a given value. The GFF is a generalization of the Gaussian random walk, with time replaced by a graph $G$. As a Gaussian vector, the GFF is analytically tractable: for instance, its $n$-point correlation functions can be computed explicitly in terms of the Green’s function of the underlying graph.
The key result justifying our approach is that the law of the GFF conditioned on its norm is that of a spin $O(N)$ model in an inhomogeneous environment. More precisely, let $\phi: G \rightarrow \R^N$ be an $N$-vectorial (massive) GFF—that is, a collection of $N$ independent scalar (massive) GFFs. Then conditionally on the norms $(|\phi(x)|)_{x \in G}$, the law of the angular component $\phi(x)/|\phi(x)|$ is that of the spin $O(N)$ model on $G$ with the inverse temperature at each edge $\{x,y\}$ given by $|\phi(x)|| \phi(y)|$. This connection has already proved fruitful: for $N=1$, it was used to advance the understanding of the Ising model on general graphs \cite{LW,DCRS}; for $N=2$, it was employed in \cite{AGS} to study the disordered environments, in which the $XY$ model mixes exponentially fast, even when the temperature is low in most locations. 

To exploit this connection for a spin $O(N)$ model at low temperature, we propose conditioning a (vector-valued) GFF to have norm at least $R > 0$ at every vertex. We show (Theorem \ref{thm:repulsion}) that this conditioning induces the so-called entropic repulsion phenomenon (cf. \cite{entropRep3+D, entropRep2D,FHO1,FHO2}) for the norm of the GFF: although it is not energetically efficient, the norm is pushed away from the origin so that typical fluctuations of the field do not violate the constraint. At the same time, we prove (Theorem \ref{thm:freezing}) that the spins become totally ordered at mesoscopic scales—that is, with high probability, for two typical points taken at arbitrary mesoscopic distances, the corresponding spins of the conditioned field point almost in the same direction.

The behavior explained above is a signal that to better mirror the properties of the spin $O(N)$ model at low temperature, it is necessary to add a mass to the conditioned field. That is to say, one needs to study the massive vector-valued GFF conditioned to have its norm bigger than $R$ everywhere. In this case, we show (Theorem \ref{thm:massiveGFF}) that the typical norm of the conditioned massive field at any given site remains bounded as $G \nearrow \Z^2$, and thus there are infinite-volume limits of this conditioned field.

We expect that future investigation of these infinite-volume measures will shed light on the geometric properties of the low-temperature regime of the spin $O(N)$ model. In particular, we believe that the isomorphism theorems connecting this GFF to loop soups may provide insight into the correlations of the spins of the spin $O(N)$ model in this regime.

\subsection*{Overview of main results}
\subsection{Massless case} 
Let $\phi \sim \P_n$ be an $N$-vectorial Dirichlet Gaussian free field\footnote{We normalize the field so that the variance of a coordinate at the origin is $\E[(\phi^1(0))^2]\sim \log n$. See Subsection \ref{subsec:notation&defs}, particularly Remark \ref{rem:normalization}.} on $\Lambda_n = [-n/2, n/2]^2 \cap \Z^2$. For an open ball $I = \B^N(0,R) \subset \R^N$ and $V \subset \Lambda_n$, define the event 
\begin{align*}
    \Omega_{V}^I \coloneqq \{\phi(x) \notin I, \; \forall x \in V\}.
\end{align*}
Our goal is to understand the behavior of the field upon conditioning on $\Omega_V^I$ as $V, \Lambda_n \nearrow \Z^2$. To avoid the effect of the Dirichlet boundary condition, we choose $V = D_n= n D \cap \Z^2$, where $D\subset \Lambda:= [-1/2, 1/2]^2$ is an open set containing $0$ with smooth outer boundary at positive distance from $\partial \Lambda$.

As mentioned earlier, the norm of the field ($|\phi|$) conditioned on $\Omega_{D_n}^I$, for any $R > 0$, behaves similarly to a scalar field conditioned to remain positive (the so-called \emph{hard-wall} condition)—see Section \ref{sec:repulsion}. A key ingredient in the analysis of the latter was the study of the (unconditioned) field's maximum. In the vectorial setting—perhaps unsurprisingly—the corresponding step involves analyzing the range of the (unconditioned) vector-valued field, rather than just its maximum.
\begin{theorem} \label{intro:thm:2Dno_hole}
    Let $\varepsilon > 0$ and $0<r_n<1$  such that $\log (1/r_n) = o(\log(n))$. Then,
    \begin{align*} 
        \sup \Big\{\P_n\Big[\phi(x) + t \notin I, \;\forall x \in \Lambda_n \cap \mathcal{B}^2(0, r_n n)\Big]: t\in \B^N \big(0,  2(1-\varepsilon)\log n\big) \Big\} \xrightarrow{n\to \infty} 0.
    \end{align*} 
\end{theorem}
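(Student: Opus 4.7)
The plan is a modified second-moment argument on the count
\[
Z_t := \#\bigl\{x \in \Lambda_n \cap \mathcal{B}^2(0, r_n n) : \phi(x) + t \in \B^N(0, R)\bigr\},
\]
aiming to show $\P_n[Z_t \ge 1] \to 1$ uniformly in $t \in \B^N(0, 2(1-\varepsilon)\log n)$. The uniformity follows by a net argument: cover this ball with an $(R/2)$-net of polylogarithmic size, and prove polynomial decay of $\P_n[Z_t = 0]$ at each net point.

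The first moment is easy: the Gaussian density of $\phi(x)$ at $-t$ is of order $(\log n)^{-N/2}\exp(-|t|^2/(2\log n + O(1)))$, so summing over $x \in \Lambda_n \cap \mathcal{B}^2(0, r_n n)$ gives $\E Z_t \gtrsim n^{2 - 2(1-\varepsilon)^2 + o(1)}$, which is polynomially large since $\log(1/r_n) = o(\log n)$. A naive second moment fails because the GFF's logarithmic correlations cause hits to cluster, so I would use a multi-scale modified second moment in the spirit of Bramson--Ding--Zeitouni. Concretely, decompose $\phi = \sum_{k=0}^K \psi_k$ along dyadic scales $r_k = 2^{-k} r_n n$ via the Markov property, where each $\psi_k$ is independent with $O(1)$ variance per coordinate, and restrict $Z_t$ to those $x$ whose partial sums $S_k(x) = \sum_{j \le k}\psi_j(x)$ stay in an $\R^N$-tube of radius $k^{3/4}$ around the straight segment from $0$ to $-t$. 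This restriction keeps a constant fraction of the first moment by ballot-type barrier estimates applied coordinate-wise (using independence of the $N$ coordinates), and cuts the second moment down to order $(\E Z_t)^2$, because pairs branching at scale $k$ contribute two conditionally independent Gaussian hit probabilities starting from a tube location near the interpolation, and the resulting sum over branching scales is geometric. Paley--Zygmund then gives $\P_n[Z_t \ge 1] \ge c > 0$ at each fixed $t$.

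To upgrade to high probability, I would partition $\mathcal{B}^2(0, r_n n)$ into $L \to \infty$ disjoint sub-balls of mesoscopic radius $\tilde r$ with $\log(1/\tilde r) = o(\log n)$; the Markov decomposition produces GFFs independent across sub-balls, while the harmonic boundary contribution has typical size $O(\sqrt{\log n})$ and can be absorbed by running the previous argument with $\varepsilon$ replaced by some $\varepsilon' < \varepsilon$ after a further net argument on the boundary contribution. Independence across sub-balls then makes the probability of no hit decay geometrically in $L$, beating the polylogarithmic net factor on $t$. The main technical obstacle is the modified second-moment computation under the entropic tube, where for each branching scale $k$ one must carefully track the tube-entry probability against the two conditional Gaussian hit probabilities in $\B^N(-t, R)$; the product structure of the GFF across its $N$ coordinates is essential, since it lets the barrier and hit estimates decouple into $N$ independent copies of the classical scalar bounds.
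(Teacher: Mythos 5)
Your plan is a legitimate route to the theorem, but it is genuinely different from the paper's. You re-derive (a vectorial, shifted-target version of) the subcritical thick-point lower bound from scratch: a Bramson--Ding--Zeitouni-style truncated second moment on the point count $Z_t$, Paley--Zygmund at fixed $t$, a boost to high probability by conditioning on the field outside finitely many disjoint mesoscopic sub-balls, and a polylogarithmic net in $t$. The paper instead works at a single coarse scale: it shows (Proposition \ref{prop:no_hole_harmonic}) that with high probability at least $n^\kappa$ boxes $B$ of side $n^\alpha$ have harmonic average $h_B$ within $\delta\log n$ of $-t$, importing the needed counting of high points from the proof of \cite[Theorem 1.3]{extremesGFF} (Proposition \ref{prop:Dav}) for $s\neq 0$ and using a direct Chebyshev/arcsine-law argument (Lemma \ref{lemma:pos_centers_estimate}) for $s=0$; the hit is then produced by the $n^\kappa$ conditionally independent zero-boundary fields inside those boxes, each hitting the target with probability about $n^{-\delta^2N/(2\alpha)}$, and uniformity in $t$ is obtained softly by compactness of $s=-t/\log n$ rather than by a net. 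What your approach buys is self-containedness and quantitative (stretched-polynomial) failure probabilities uniform in $t$; what the paper's buys is that the entire truncated-second-moment machinery is delegated to the existing high-points result, so the new work reduces to the parameter bookkeeping of Lemma \ref{lemma:inequalities_non_empty} and the $s=0$ case. Two places in your sketch deserve care, since they carry the real content: (i) the truncated second moment with the $\R^N$-valued tube and a target ball $\B^N(-t,R)$ of fixed radius is asserted rather than carried out, and it is the heavy lifting (for strictly subcritical $|t|\leq 2(1-\varepsilon)\log n$ it does work, and coordinate independence does let the barrier estimates decouple, but the scale-by-scale bookkeeping must be done); (ii) in the sub-ball boosting the harmonic contribution $h$ is not a constant shift: its oscillation over the core of a sub-ball is unbounded in $n$, so you cannot reduce to a single shifted target by a net on $h$; instead the fixed-$t$ argument must be run with a spatially varying target $-(t+h(x))$ whose norm (not value) is controlled by $2(1-\varepsilon')\log n$ on a good event of the boundary data, with the hit events across sub-balls treated as conditionally independent given the outside field. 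Neither point looks fatal, but both must be executed for the proposal to become a proof.
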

\noindent Informally, the result states that there is no hole of fixed size at any given point $-t$ in the range of the vector-valued field away from the boundary, since $\max_{x \in \Lambda_n} |\phi(x)| = (2 + o(1)) \log n$ in probability \cite[Theorem 2]{entropRep2D}.

For the norm of the conditioned field, we then prove:
\begin{theorem}[Entropic repulsion of the norm]
\label{thm:repulsion}
    Let $\beta > 0$ and $\varepsilon > 0$. Then, the set of vertices $x \in D_n$ such that 
    \begin{align*}
        \P_n\bigg[\frac{|\phi(x)|}{\log n} \in \big(2 -\beta, 2 + \beta\big) ~\bigg\vert~ \Omega_{D_n}^I\bigg] \leq  1- \varepsilon
    \end{align*}
    is negligible. That is to say, its cardinality divided by $n^{2}$ goes to $0$ as $n\rightarrow \infty$. 
\end{theorem}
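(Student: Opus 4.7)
Plan. By Markov's inequality, it suffices to show pointwise that for every $x_0 \in D_n$ in the macroscopic bulk (say, $d(x_0, \partial D_n) \ge \delta n$ for any fixed $\delta > 0$),
\[ \P_n\bigl[|\phi(x_0)|/\log n \notin (2-\beta, 2+\beta) \,\big|\, \Omega_{D_n}^I\bigr] \to 0 \quad \text{as } n \to \infty; \]
summing over such $x_0$ and then letting $\delta \to 0$ gives the desired $o(n^2)$ bound on the bad set. I would treat the lower tail $\{|\phi(x_0)| \le (2-\beta)\log n\}$, where Theorem \ref{intro:thm:2Dno_hole} is the new input, and the upper tail $\{|\phi(x_0)| \ge (2+\beta)\log n\}$ separately, both via a domain-Markov decomposition of $\phi$ around $x_0$.

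Fix a mesoscopic ball $B = \mathcal{B}^2(x_0, r_n n) \subset D_n$ with $r_n \to 0$ such that $\log(1/r_n) = o(\log n)$, and decompose $\phi|_B = h_B + \tilde\phi_B$, where $h_B$ is the harmonic extension of $\phi|_{\partial B}$ and $\tilde\phi_B$ is an independent zero-boundary vector-valued GFF on $B$. By Harnack's inequality, $h_B \approx h_B(x_0)$ on a sub-ball $B' \subset B$; moreover $\tilde\phi_B(x_0)$ is a centered Gaussian of variance $\Theta(\log n)$ independent of $\phi|_{B^c}$. For the lower tail, $\{|\phi(x_0)| \le (2-\beta)\log n\}$ forces $|h_B(x_0)| \le (2-\beta/2)\log n$ outside a Gaussian-tail event of probability $n^{-c}$ (which $\tilde\phi_B(x_0)$ would have to absorb). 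On this restricted event, the deterministic shift $h_B(x_0)$ lies in the admissible range $|t| \le 2(1-\varepsilon)\log(r_n n)$ of Theorem \ref{intro:thm:2Dno_hole} applied to the GFF $\tilde\phi_B$, yielding $\P_n[\Omega_{B'}^I \mid \phi|_{B^c}] \to 0$.

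To turn this into a conditional statement I would use the Cameron--Martin lower bound $\P_n[\Omega_{D_n}^I] \ge \exp(-C(\log n)^2)$, obtained by shifting $\phi$ by the harmonic interpolation of $(2+\beta/4)(\log n)\,e_1 \mathbb{1}_{D_n}$ to $\Lambda_n$, applying Cauchy--Schwarz, and using the max bound $\max_{\Lambda_n}|\phi| \le (2+o(1))\log n$ from \cite{entropRep2D}. For the upper tail, a Cameron--Martin tilt by the same $\psi$ concentrates $\phi(x_0)$ around $\psi(x_0)$ (of norm $(2+\beta/4)\log n$) with Gaussian fluctuations of order $\sqrt{\log n}$, so $\{|\phi(x_0)| \ge (2+\beta)\log n\}$ has tilted probability $\le e^{-c\log n}$; transferring back via the Radon--Nikodym factor and pairing with the lower-tail analysis gives the upper-tail bound.

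The hardest step will be the quantitative matching in the lower-tail argument: Theorem \ref{intro:thm:2Dno_hole} provides only a qualitative rate $\epsilon_n \to 0$, while the denominator $\P_n[\Omega_{D_n}^I]$ is exponentially small in $(\log n)^2$. I would close this gap by a coarse-graining upgrade: partition $D_n$ into $\sim 1/r_n^2$ disjoint boxes $\{B_i\}$ of side $r_n n$, and use the Gibbs--Markov independence of the $\tilde\phi_{B_i}$ conditionally on $\phi$ along $\bigcup_i \partial B_i$, so that having a positive fraction of bad boxes on $\Omega_{D_n}^I$ forces the joint probability to be $\lesssim \epsilon_n^{\Theta(1/r_n^2)}$; for $r_n$ tending to zero slowly enough (tuned to $\epsilon_n$ but still satisfying $\log(1/r_n) = o(\log n)$) this beats the Cameron--Martin lower bound, contradicting the assumption of a non-negligible bad set.
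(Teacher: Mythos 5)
Your lower-tail mechanism is essentially the paper's own (Proposition \ref{prop:few_low_boxes}: coarse-grain, condition on the grid values, and apply Theorem \ref{thm:2Dno_hole} independently in each box with low harmonic extension, so that ``many low boxes'' has probability $e^{-\omega(\log^2 n)}$, beating the lower bound \eqref{eq:lowerbound_condevent} on $\P_n[\Omega^I_{D_n}]$), but as written the proof has genuine gaps. First, your framing reduces the theorem to a \emph{pointwise} statement at every bulk site, and your single-site argument does not survive the conditioning: the exceptional event you discard, $\{|\tilde\phi_B(x_0)|\geq (\beta/2)\log n\}$, has unconditional probability only $n^{-c}$, which is enormous compared with $\P_n[\Omega^I_{D_n}]=e^{-\Theta(\log^2 n)}$, so it cannot be ignored after conditioning. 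Your coarse-graining upgrade fixes a different event — a macroscopic fraction of boxes with low harmonic extension at their \emph{centers} on one fixed partition — and therefore yields neither the claimed pointwise statement nor, by itself, the statement about almost every site $x\in D_n$. What is missing is precisely the bridge the paper builds: (i) a conditional bound showing that the boxes where the zero-boundary fluctuation field is atypically large at the relevant site are few, proved with Bernstein's inequality across the independent box fields so that the failure probability ($e^{-n^{c}}$ in Proposition \ref{prop:few_high_fluct_boxes}) beats the $e^{-C\log^2 n}$ denominator — a single-site Gaussian tail is useless here; and (ii) an averaging over all translates $x_0\in\Lambda_{n^\alpha}$ of the grid together with a counting argument turning conditional expectations of box counts into a deterministic set of good sites of cardinality $|D_n|(1-o(1))$. (Also, ``Harnack'' is not the right tool since $h_B$ changes sign; one needs an oscillation/variance estimate as in \eqref{eq:harmdiff_bound} together with an a priori bound on $\max|\phi|$ whose failure probability beats $e^{-C\log^2 n}$.)

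Second, your upper-tail argument does not close quantitatively: under the Cameron--Martin tilt the event $\{|\phi(x_0)|\geq(2+\beta)\log n\}$ only has probability $e^{-c\log n}$ (the fluctuations have variance of order $\log n$), while transferring back to the original measure and dividing by $\P_n[\Omega^I_{D_n}]$ each cost factors of size $e^{\Theta(\log^2 n)}$; so the bound you obtain is vacuous unless the tilted tail were $e^{-\omega(\log^2 n)}$, which it is not. The paper avoids reproving this: it deduces the upper tail, uniformly in $x\in D_n$, from the scalar hard-wall result of \cite{entropRep2D} via the conditional FKG inequality, after reducing the vector norm to a single coordinate by covering the complement of a large ball with finitely many rotations of a slab and conditioning on the other coordinates (Corollary \ref{cor:halfline_implications}, giving \eqref{eq:norm_upperbound}); your sketch also does not address this vector-valued reduction. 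To repair your write-up with minimal changes, replace the pointwise reduction by the counting/grid-averaging scheme above and replace the tilt argument for the upper tail by the FKG comparison with the hard-wall field.
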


Moreover, we describe the angular component $\phi/|\phi|$ of the GFF—referred to as the spin—under this conditioning. In this case, the spins exhibit what we call freezing, or total ordering, at mesoscopic scales as established in the following theorem.
\begin{theorem}[Freezing of spins] \label{thm:freezing}
    Let $\varepsilon>0$, $\nu\in [0,1)$ if $N\geq 2$, and $\nu = 1$ if $N=1$. Then, for all sufficiently large $n$, there exists a set $\tilde D_n \subset D_n$ of cardinality $|D_n|(1-o(1))$ such that for all its vertices $x,y \in \tilde D_n$ with $|x-y| \leq n^\nu$,
    \begin{align}
    \label{eq:freezing_0}
        \E_n \bigg[\frac{\phi(x) }{|\phi(x)| } \cdot \frac{\phi(y) }{|\phi(y)| } ~ \bigg\vert~ \Omega_{D_n}^I \bigg] \geq 1 - \varepsilon.
    \end{align}
\end{theorem}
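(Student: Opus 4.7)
The plan is to combine the pointwise norm control from Theorem \ref{thm:repulsion} with an upper bound on the field increments $|\phi(x) - \phi(y)|$ under the conditioning, and then invoke the elementary deterministic inequality
\[
    \frac{u}{|u|} \cdot \frac{v}{|v|} \geq 1 - \frac{2|u-v|^2}{\min(|u|, |v|)^2}, \qquad u, v \in \R^N \setminus \{0\}.
\]

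Fix $\varepsilon > 0$ and choose small auxiliary parameters $\beta, \eta > 0$ depending on $\varepsilon$. Applying Theorem \ref{thm:repulsion} with parameter $\beta$ (and suitably small probability threshold) produces a set $\tilde D_n \subset D_n$ with $|\tilde D_n|/|D_n| \to 1$, such that for each $x \in \tilde D_n$ one has $\P_n[|\phi(x)|/\log n \in (2-\beta, 2+\beta) \mid \Omega_{D_n}^I] \geq 1 - \varepsilon/10$. A union bound then shows that for any pair $x, y \in \tilde D_n$, both norms fall in the window $((2-\beta)\log n, (2+\beta)\log n)$ with conditional probability at least $1 - \varepsilon/5$.

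The core technical step is the increment bound: for $x, y \in \tilde D_n$ with $|x-y| \leq n^\nu$,
\[
    \P_n\bigl[|\phi(x) - \phi(y)| > \eta \log n \,\big|\, \Omega_{D_n}^I\bigr] \leq \varepsilon/5.
\]
The naive approach of dividing the unconditional Gaussian tail $\exp(-c\eta^2 \log n /\nu)$ by $\P_n[\Omega_{D_n}^I] = \exp(-\Theta((\log n)^2))$ is far too wasteful. The plan is instead to invoke the Cameron--Martin shift underpinning entropic repulsion: the conditional law of $\phi$ is well approximated by the law of $\phi_0 + h$, where $\phi_0$ is an unconditioned GFF and $h$ is a deterministic, macroscopically smooth profile of magnitude $\approx 2\log n$. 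Since $h$ varies slowly, $|h(x) - h(y)|$ is negligible on mesoscopic scales $|x-y| \leq n^\nu$ with $\nu < 1$, so the conditional law of the increment is essentially the Gaussian law of $\phi_0(x) - \phi_0(y)$ of variance $O(\log|x-y|) = O(\nu \log n)$, which gives the desired tail bound. In the case $N = 1$ and $\nu = 1$ this reasoning breaks down, since the effective shift $h$ can take either sign; here the argument must show instead that the sign of $\phi$ is effectively constant on $D_n$ under the conditioning, using that any sign flip between neighbors costs an increment of at least $2R$, combined with reflection and the $\phi \leftrightarrow -\phi$ symmetry.

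Finally, on the good event where $|\phi(x)|, |\phi(y)| \in ((2-\beta)\log n, (2+\beta)\log n)$ and $|\phi(x) - \phi(y)| \leq \eta \log n$, the deterministic inequality yields
\[
    \frac{\phi(x)}{|\phi(x)|}\cdot \frac{\phi(y)}{|\phi(y)|} \geq 1 - \frac{2\eta^2}{(2-\beta)^2} \geq 1 - \varepsilon/2
\]
for $\eta, \beta$ chosen small enough. The complementary event has conditional probability at most $2\varepsilon/5$, and on it the inner product is at least $-1$, so taking expectation gives the desired lower bound $\geq 1 - \varepsilon$. I expect the main obstacle to lie in the increment estimate via the Cameron--Martin shift, and particularly in its uniformity over pairs $(x, y) \in \tilde D_n$; the scalar case $N = 1$, $\nu = 1$ appears especially delicate, since a genuinely global mechanism must be invoked to prevent sign flips at arbitrary distances.
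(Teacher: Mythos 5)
Your overall skeleton (norm window from Theorem \ref{thm:repulsion}, an increment bound $|\phi(x)-\phi(y)|\leq \eta\log n$, and the elementary spin inequality) matches the shape of the paper's argument, but the step you yourself flag as the core—the increment bound via a Cameron--Martin shift—is a genuine gap, not a technicality. First, the claim that the conditional law of $\phi$ given $\Omega^I_{D_n}$ is well approximated by $\phi_0+h$ with a \emph{deterministic} profile $h$ of magnitude $\approx 2\log n$ is false as stated: the conditioning is invariant under global rotations (for $N\geq 2$) and under $\phi\leftrightarrow-\phi$ (for $N=1$), so the conditional law of $\phi(x)/|\phi(x)|$ at a single site is exactly uniform, and no deterministic shift can capture it; the direction of the repulsion is spontaneously broken and random. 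Showing that this random direction is \emph{locally constant} at scale $n^\nu$ is precisely the content of the theorem, so your reduction is circular. The paper never proves a shifted-field approximation; instead it works with the decomposition $\phi = h^{\primorial_\alpha(x_0)} + \phi^B$ on mesoscopic grids, shows under the conditioning that boxes with $|h_B|<(2-\beta)\log n$ are negligible (Proposition \ref{prop:few_low_boxes}, powered by the no-hole Theorem \ref{thm:2Dno_hole}), that boxes with fluctuations $|\phi^B|>\eta\log n$ at the two relevant points are negligible (Proposition \ref{prop:few_high_fluct_boxes}, via the Markov property and Bernstein), and that $h^{\primorial_\alpha(x_0)}$ is nearly constant at scale $n^\nu\ll n^\alpha$ (the variance bound \eqref{eq:harmdiff_bound}); the deterministic set $\tilde D_n$ is then produced by averaging over $n^\nu$-shifted grids. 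None of these inputs appears in your proposal, and without them the uniformity over pairs that you acknowledge as the obstacle has no source.

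The scalar case $N=1$, $\nu=1$ is also not salvageable by the mechanism you suggest. A sign flip between neighbours costs only an increment of order $2R=O(1)$, which is not costly for the GFF at all (nearest-neighbour increments are $O(1)$); the actual obstruction is that, by entropic repulsion, the field sits at height $\approx\pm 2\log n$ on most boxes, so a macroscopic interface between a plus region and a minus region would force, at the level of the coarse-grained field $h_B$, either many boxes with $|h_B|\lesssim\delta\log n$ (excluded by Proposition \ref{prop:few_low_boxes}, i.e.\ ultimately by Theorem \ref{thm:2Dno_hole}) or jumps $|h_B-h_{B'}|\gtrsim\delta\log n$ between neighbouring boxes, whose probability $e^{-c\log^2 n}$ must be beaten against the conditioning probability $e^{-\Theta(\log^2 n)}$ via the gradient-covariance control of Lemma \ref{lemma:harmonic_cov_EVs} plus a white-noise decomposition and Bernstein (Proposition \ref{prop:pick_sign}), followed by a chaining argument over grid shifts to make the sign globally consistent. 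Your ``$2R$ per flip plus reflection symmetry'' sketch does not engage with any of this, so the $\nu=1$ statement remains unproven in your proposal as well.
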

Additionally, for $N=1$—as in the hard-wall case\footnote{Note that there is a typo in the corresponding result in \cite{entropRep2D}, see \ref{footnote:typo_cond_event}}—we show (cf. Corollary \ref{cor:halfline_implications} and Proposition \ref{prop:N=1upperbound_condition}) that for any $\beta > 0$,
\begin{align*}
    \lim_{n \rightarrow \infty} \frac{\log \P_n[ \Omega_{D_n}^{I}]}{ \log^2 n }  = -\frac{2}{\pi} \:\cpc_\Lambda(D),
\end{align*}
where $\cpc_\Lambda(D)$ is the relative capacity of $D$ with respect to $\Lambda$ defined by 
\begin{align*}
    \cpc_\Lambda(D) =\frac{1}{2} \inf\left \{ \norm{\nabla f}_{L^2(\Lambda)}^2: f \in H^1_0(\Lambda), f \geq 1 \text{ on } D\right \}.
\end{align*} 

Similarly to the hard-wall case, to prove all the above results we study the field at mesoscopic scales $n^\alpha$ for appropriate values of $\alpha \in (0,1)$, where the local spikes are living. More precisely, we divide $D_n$ into boxes of side length $n^\alpha$ such that this subgrid is centered at some vertex $x_0 \in \Lambda_{n^\alpha}$—denote this collection of boxes by $\Pi_\alpha(x_0)$.

Theorem \ref{intro:thm:2Dno_hole} follows from three components: bounds obtained in the proof of \cite[Theorem 1.3]{extremesGFF} on the number of $\eta$-high points of the GFF (adapted to the vector-valued case), the domain Markov property of the GFF (Lemma \ref{lemma:domainMP}), and the fact that the number of boxes $B \in \Pi_\alpha(x_0)$ contained in $\B^2(0, r_n n)$, where at the center of $B$, all the coordinates of the harmonic extension $h_B$ of $\phi |_{\partial_{\text{in}} B}$ are positive, cannot be of smaller order than the total number of such boxes in $\Pi_\alpha(x_0)$ (Lemma \ref{lemma:pos_centers_estimate}).

As for the entropic repulsion, the upper bound for the norm of the conditioned field is a simple consequence of the hard-wall case. The lower bound follows easily from two key observations (proven in Subsection \ref{subsec:key_ingredients_repulsion}): on the event $\Omega^I_{D_n}$, for a fixed small $\beta > 0$
\begin{enumerate}
    \item The number of boxes $B \in \Pi_\alpha(x_0)$, where at the center of $B$, the harmonic extension $h_B$ has the norm smaller than $(2 - \beta/2) \log n$, is negligible (uniformly in $x_0$) if $\alpha$ is close enough to one;
    \item The number of boxes $B \in \Pi_\alpha(x_0)$, for which $|\phi_{x_B} - h_B|$ exceeds $\frac{\beta}{2} \log n$ is negligible (uniformly in $x_0$).
\end{enumerate}
These two ingredients—with a slightly generalized version of the second point (see Proposition~\ref{prop:few_high_fluct_boxes})—also imply, with a suitable choice of $\tilde D_n$, freezing at all mesoscopic scales $n^\nu$ for $\nu \in [0,1)$.

The first ingredient relies primarily on the fact that the probability of a hole appearing in the range of the GFF is small (cf. Theorem \ref{intro:thm:2Dno_hole}), while the second follows from the domain Markov property of the GFF and Bernstein's inequality for independent bounded random variables. 

To prove freezing of signs at macroscopic scales in the case $N = 1$, in addition to the two ingredients above, we need one further input (proved in Subsection \ref{subsubsec:pick_sign}):
\begin{enumerate}
\setcounter{enumi}{2}
    \item Given $\Omega^I_{D_n}$ and $\alpha \in (0,1)$, the sign of $h_B$ remains constant in almost all boxes of $\Pi_\alpha(x_0)$ (for all $x_0 \in \Lambda_{n^\alpha}$).
\end{enumerate}
Combined with the two previous ingredients, it implies that the set $G_\alpha(x_0)$ of centers of boxes $\Pi_\alpha(x_0)$, for each $x_0 \in \Lambda_{n^\alpha}$, picks a sign—in the sense that $\phi(x)$ at ``almost all" centers is either positive or negative. Furthermore, the union of the disjoint sets $\Cc^{\sign}_\alpha(z, \beta, \varepsilon)$ defined as
\begin{align*}
    \bigg\{x \in G_\alpha(x_0): \P_n \bigg[ \frac{|\phi(x)|}{\log n} \geq 2 - \beta, \,\sgn(\phi(x)) = \sign(G_\alpha(x_0))~\bigg|~\Omega^I_{D_n}\bigg] \geq 1 - \varepsilon \bigg\}
\end{align*}
contains all but $o(n^2)$ points of $D_n$. Since for two grids $G_\alpha(x), G_\alpha(y)$ at distance $n^\alpha/K$, for $K>0$ sufficiently large, it is highly unlikely that they pick different signs, the vertices in the above union satisfy \eqref{eq:freezing_0}. 

The idea behind the third ingredient is that if the number of both positive ($h_B > 0$) and negative ($h_B < 0$) boxes were considerable, then at the interface between the negative and positive ones, there would either be considerably many \emph{low} boxes—those with $|h_B| < \delta \log n$ for some small $\delta > 0$—or jumps of magnitude $\delta \log n$, i.e., values of $|h_B - h_{B'}|$ for neighboring boxes $B$ and $B'$ of opposite sign. Both of these events are extremely unlikely.

\subsection{Massive case}
For $m^2>0$, let $\psi \sim \P_{\Z^2, m^2}$ be an $N$-vectorial $m^2$-massive Gaussian free field on $\Z^2$. As before, for an open ball $I = \B^N(0,R) \subset \R^N$ and $V \subset \Lambda_n$, let 
$\Omega_{V}^I \coloneqq \{\psi(x) \notin I, \; \forall x \in V\}$. Denote by $\G_1$ the set of infinite-volume Gibbs measures $\mu$ heuristically corresponding to the law of $\psi$ conditioned to avoid $I$ at every site of $\Z^2$ (``$\P_{\Z^2, m^2}[ \, \cdot\, |\Omega^I_{\Z^2}]$'') and possessing uniform first moments at each site; that is, $\mu$ satisfies $\sup_{x \in \Z^2} \mu[|\psi(x)|] \leq C < \infty$.
\begin{theorem}[Massive GFF conditioned to avoid a ball]
\label{thm:massiveGFF}
    The set $\G_1$ is non-empty. Furthermore, if $R>0$ is sufficiently small (depending on $m^2, N, d$, under an additional assumption on $m^2$), $|\G_1| = 1$. On the other hand, for $N=1$, the model undergoes a phase transition: for sufficiently large $R>0$, $|\G_1| > 1$. 
\end{theorem}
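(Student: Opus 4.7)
The plan is to construct $\mu\in\G_1$ as a weak subsequential limit of the finite-volume conditioned measures $\mu_n := \P_{\Lambda_n, m^2}[\,\cdot\,\mid\Omega_{\Lambda_n}^{I}]$ with zero boundary conditions. Exponential decay of correlations for the massive GFF gives uniform-in-$n$ variance bounds on the unconditioned field, so the key new input is a uniform-in-$n$ first moment bound $\sup_n \sup_{x}\E_{\mu_n}[|\psi(x)|]\leq C$. I would derive this via the domain Markov property applied to a fixed ball $\mathcal{B}(x,r)$: conditionally on $\psi$ outside, the interior law is a massive GFF with Gaussian boundary data $\bar\psi$, and the conditioning $\Omega^I\cap\mathcal{B}(x,r)$ has positive local density uniformly bounded below on $\{\|\bar\psi\|_\infty\leq M\}$ for each fixed $M$. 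Coupled with Gaussian tails of $\bar\psi$ coming from the bounded massive Green's function, this yields the uniform $L^1$ bound. Tightness of the finite-dimensional marginals then produces a weak subsequential limit $\mu$, whose Gibbs property follows from passing DLR equations to the limit.

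\textbf{Uniqueness for small $R$.} For small $R$ the restriction $|\psi|\geq R$ is a mild perturbation of the Gaussian structure. I would verify Dobrushin's single-site uniqueness criterion in Wasserstein distance: the single-site conditional kernel under the Gibbs specification is the Gaussian $\mathcal{N}(\bar\psi(x),(4+m^2)^{-1})$ restricted to $\R^N\setminus I$, and its Wasserstein distance to the analogous kernel from a different neighbour configuration is bounded by the unrestricted Gaussian coefficient $4/(4+m^2)$ plus a correction of order $R^N$ (from the $\bar\psi$-dependence of the restriction's normalising constant). Together with the assumption on $m^2$ ensuring that the unrestricted Dobrushin coefficient sits strictly inside the critical window with room for the correction, this yields contraction strictly less than one and hence $|\G_1|=1$.

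\textbf{Phase coexistence for $N=1$, large $R$.} I would construct $\mu^\pm\in\G_1$ as weak limits of $\P_{\Lambda_n, m^2}[\,\cdot\,\mid\Omega^I_{\Lambda_n},\,\psi|_{\partial\Lambda_n}=\pm R]$, and separate them via a Peierls contour argument. Under the $+$ measure, $\{\psi(0)<0\}$ forces a closed dual contour $\gamma$ around $0$ such that every primal edge $\{u,v\}$ crossed by $\gamma$ has $\psi(u)\geq R$ and $\psi(v)\leq-R$. Flipping the field inside $\gamma$ to $\tilde\psi:=-\psi$ preserves both $\Omega^I$ (since $|\tilde\psi|=|\psi|$) and the boundary values, and changes the Hamiltonian on the $|\gamma|$ crossing edges by
\[
(\tilde\psi_u-\tilde\psi_v)^2-(\psi_u-\psi_v)^2=4\psi_u\psi_v\leq-4R^2,
\]
so the density of $\tilde\psi$ dominates that of $\psi$ by a factor $e^{2R^2|\gamma|}$. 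Combined with the standard $3^{|\gamma|}$ entropy bound on the number of contours around $0$, this gives $\mu_n^+[\psi(0)<0]\leq C e^{-c R^2}\xrightarrow{R\to\infty}0$ uniformly in $n$. By symmetry $\mu_n^-[\psi(0)>0]\to 0$, and hence $\mu^+\neq\mu^-$.

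\textbf{Main obstacle.} I expect the uniform first-moment bound in the existence proof to be the principal technical step, particularly in the vectorial case $N\geq 2$ where FKG is unavailable. Showing that the global conditioning on $\Omega^I$ does not inflate $\E_{\mu_n}[|\psi(x)|]$ uniformly in $n$ requires carefully combining the local positive density of $\Omega^I$ on $\mathcal{B}(x,r)$ with an energetic comparison to a reference admissible configuration such as $\psi\equiv R\mathbf{e}_1$, and may require an iteration over nested balls to avoid circularity in the bound. This is also where the additional assumption on $m^2$ naturally plays a role in the small-$R$ uniqueness argument.
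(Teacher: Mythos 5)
The genuine gap is in the existence part, and it is precisely the step you flag as the ``main obstacle'': the uniform-in-$n$ first-moment bound is never actually established. Your sketch (domain Markov property on a fixed ball $\mathcal{B}(x,r)$ plus a lower bound on the local density of $\Omega^I$ on $\{\|\bar\psi\|_\infty\le M\}$) is circular as stated, because the quantity you need to control is exactly the law of the boundary data $\bar\psi$ under the \emph{globally} conditioned measure, and nothing in the sketch rules out the conditioning inflating it — in the massless case it does, by $2\log n$, so any valid argument must use the mass quantitatively. The paper closes this via a different mechanism: for $N=1$, the FKG inequality for the conditioned field (Proposition \ref{prop:FKG_condGFF}) reduces all cases to the one-sided event $\Omega^{(-\infty,R)}_{\Lambda_n}$, for which $n\mapsto\E_{m^2}[(\psi(x))_+\mid\Omega^{(-\infty,R)}_{\Lambda_n}]$ is monotone, and a relative-entropy type inequality (following \cite[Lemma 4.7]{entropRep3+D}) bounds the conditional mean by $\sqrt{-2\log\P_{m^2}[\Omega^{(-\infty,R)}_{\Lambda_{2n}}]/\langle \1_{\Lambda_n},G_{\Z^d,m^2}\vert_{\Lambda_n\times\Lambda_n}^{-1}\1_{\Lambda_n}\rangle}$; Slepian's lemma gives $-\log\P_{m^2}[\Omega^{(-\infty,R)}_{\Lambda_{2n}}]=O(|\Lambda_{2n}|)$ and the exponential decay of $G_{\Z^d,m^2}$ gives $\lambda_{\max}(G_{\Z^d,m^2}\vert_{\Lambda_n\times\Lambda_n})=O(1)$, yielding a constant. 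Moreover, your worry that ``FKG is unavailable for $N\ge2$'' is resolved in the paper by conditioning on the coordinates $(\psi^j)_{j=2}^N$: given these, the constraint on $\psi^1$ is avoidance of (random) symmetric intervals, the scalar FKG applies for each realization, and one dominates by the scalar one-sided case. Without some such quantitative input, your existence claim (and hence also membership of your $\pm R$-boundary limits in $\G_1$) is unsupported.

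On the other two parts: your uniqueness argument is the same route as the paper (Dobrushin's criterion in Wasserstein distance via a derivative/variance bound on the single-site kernel), but your accounting of the extra assumption on $m^2$ is off — for $N=1$ no assumption is needed (the correction term for the half-line is in fact nonpositive), while for $N\ge2$ the assumption $m^2>2d(N-1)$ enters because the derivative bound produces $\E[|X-v|^2]$, i.e.\ a trace of order $N/(2d+m^2)$, which must be below $1/(2d)$; it is not about giving the unrestricted coefficient ``room'' for an $R^N$ correction. Your non-uniqueness argument is a genuinely different route: a direct Peierls flip on the conditioned real-valued field with $\pm R$ boundary values, which is sound in principle (the constraint forces $\psi_u\psi_v\le -R^2$ across contour edges and the flip is an injective map preserving $\Omega^I$, the mass term and the boundary), and is arguably more self-contained than the paper's argument, which conditions on the one-sided annulus event $\Omega^{(-\infty,R)}_{\Lambda_{2n}\setminus\Lambda_n}$, compares by FKG to the $+R$-boundary measure, and then uses the conditional Ising representation of the signs given the moduli together with domination by the homogeneous Ising model at inverse temperature $R^2$ with plus boundary condition (for which Peierls is quoted). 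The price of your route is that you must separately verify that the $\pm R$-boundary sequences have uniformly bounded first moments and pass the DLR equations to the limit — the paper's choice of conditioning events is made precisely so that its FKG/first-moment machinery applies verbatim to both sequences.
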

\noindent See Propositions \ref{prop:massiveGFF_existence} and \ref{prop:massiveGFF_uniqueness} for the precise statements. 

To prove the existence of the desired Gibbs measures, we consider the sequence of probability measures $(\P_{\Z^2, m^2}[\cdot| \Omega^I_{\Lambda_n}])_n$ and show that it is tight, with uniformly bounded first moment at any given site. The latter follows from the FKG inequality for the conditioned field (Proposition \ref{prop:FKG_condGFF}) and standard entropy bounds. Uniqueness for sufficiently small $R>0$ is then established in a canonical way, by verifying Dobrushin's uniqueness criterion \cite[Theorem 4]{Dobrushin}. Finally, in the scalar case with $R$ sufficiently large, we consider a similar tight sequence of probability measures $(\tilde \Q_n)_n$, where we condition on $\Omega^I_{\Lambda_n}\cap \Omega^{(-\infty, R)}_{\Lambda_{2n}\setminus \Lambda_n}$ instead of $\Omega_{\Lambda_n}^I$, and show that any of its subsequential limits differs from the one from above. This is done by comparing the law of $\psi(0)/|\psi(0)|$ given $(|\psi(x)|)_{x \in \Lambda_n}$ under $\tilde \Q_n$ with the Ising model with plus boundary condition at inverse temperature $R^2$.

\subsection*{Outline}
Section \ref{sec:setup} defines and provides the necessary background on the discrete (vector-valued massive) GFF and its covariance structure. Section \ref{sec:2D_GFF_range} discusses the range of the vector-valued GFF and proves that there is no hole of fixed size at any given point in the bulk of the range, which is then used in the proof of the entropic repulsion. In Sections \ref{sec:repulsion} and \ref{sec:freezing}, we study a vector-valued Dirichlet GFF conditioned to avoid a ball: the former section proves the entropic repulsion phenomenon (Theorem \ref{thm:repulsion}), while the latter demonstrates the freezing of spins of the conditioned field (Theorem \ref{thm:freezing}). In Section \ref{sec:massiveGFF}, we treat the massive vector-valued lattice field under an analogous condition—avoiding a ball in a box of side length~$n$—and prove Theorem \ref{thm:massiveGFF}.

\subsection*{Acknowledgments}
We thank Juhan Aru for insightful discussions.
Part of this work was carried out during the research visit of A.K. as a participant of the CMM-PhD Visiting Program 2024 to the Center for Mathematical Modeling at the University of Chile. 
The research of A.K. was supported by Eccellenza grant 194648 of the Swiss National Science Foundation and is a members of NCCR Swissmap.
The research of A.S. was supported by Centro de Modelamiento Matem\'{a}tico Basal Funds FB210005 from ANID-Chile, by Fondecyt Grant 1240884, and by ERC 101043450 Vortex. 
A.S. would also like to thank the Hausdorff Institute of Mathematics, and in particular the trimester program ``Probabilistic methods in quantum field theory'', where he was based while part of this work was carried out.


\section{Setup and preliminaries}
\label{sec:setup}

\subsection{Notation and definitions}
\label{subsec:notation&defs}
We begin this section by defining the the main object of study of this paper, the vector-valued (massive Dirichlet/zero-boundary) Gaussian free field on a two-dimensional graph. To this end, let $G \subset \Z^2$ be a given finite graph, and let $\partial G$ stand for the outer boundary\footnote{Here, by the outer boundary we mean the vertices in $\Z^2\setminus G$ that have a nearest neighbor in $G$.} of its set of vertices in $\Z^2$. From now on, we furthermore denote the norm of a vector $x\in \R^k$ for any $k \in \N$ by $|x|$.
 
\begin{definition}[($m^2$-massive $N$-vectorial) Gaussian free field]
\label{def:GFF}
    Let $m \geq 0$, $N \in \N$, $\mathfrak{g} > 0$, and $G \subset \Z^2$. We call a random function $\phi: G \rightarrow \R^N$ an $m^2$-massive $N$-vectorial Gaussian free field on $G$ (with Dirichlet boundary condition on $\partial G$) if its law is given by
    \begin{align*} 
        \P[\d \phi] \propto \exp \bigg(- \frac{\mathfrak{g}}{4}\sum_{x\sim y} |\phi(x) - \phi(y)|^2 - \frac{m^2}{2} \sum_{x \in G} |\phi(x)|^2\bigg) \prod_{x \in G} \d \phi(x),
    \end{align*}
    where here and in the sequel sums over $x\sim y$ run over the pairs $(x, y)$ such that $\{x,y\}$ is an edge in $G \cup \partial G$. We write $\phi(x)$ to mean $0$ whenever $x \in \partial G$. If $m^2 = 0$, we drop the term ``$m^2$-massive'' from the name.
\end{definition}
\begin{remark}[Normalization of the GFF]\label{rem:normalization}
    In the massless case ($m^2 = 0$), we assume throughout that $\mathfrak{g} = \frac{1}{2\pi}$, so that the variance of a coordinate field at the origin satisfies $\E[(\phi^1(0))^2] \sim \log n$. This choice considerably shortens many expressions in the statements of results and proofs. Note, however, that this is not the standard normalization of the discrete field; whenever we use results obtained for a different normalization of the field, we state them already adjusted to our setting. In the massive case ($m^2 > 0$), treated in Section \ref{sec:massiveGFF}, we instead take $\mathfrak{g} = 1$ to simplify the expressions there.
\end{remark}

One of the main technical tools in our work is the domain Markov property of the GFF. This property will be used to obtain independent behavior on the field, despite the field itself not decorrelating.
\begin{lemma}[Domain Markov property]
\label{lemma:domainMP}
     Let $G\subseteq \Z^2$ be a finite graph, $K\subseteq G$ and $m\geq 0$. For $\phi$ an $m^2$-massive GFF on $G$, set
     \begin{align*}
        h^{K}(x) = \E[\phi(x) |\F_K] \quad \text{for all } x \in G,
    \end{align*}
    where $\F_K \coloneqq \sigma(\phi(x): x \in K)$. Then, $h^{K}$ is discrete (massive-) harmonic\footnote{That is to say $(-\Delta_{G, \mathfrak{g}} + m^2) h^{K} (y) = (4\mathfrak{g} + m^2) h^K(y) - \mathfrak{g} \sum_{z \in G: z\sim y} h^K(z) = 0$ for any $y \in G\setminus K$.} on $G\setminus K$ with boundary values determined by $h^K(x) = \phi(x)$ for any $x \in \partial G \cup K$. Moreover, the field $\phi^{G\setminus K} \coloneqq \phi-h^K$ is independent of $\F_K$ and has the law of an $m^2$-massive GFF on $G\setminus K$ (with values $0$ on $K$ if viewed as a function on $G$). 
\end{lemma}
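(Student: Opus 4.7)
The plan is to exploit the Gaussian structure via an explicit energy-splitting argument. First, note that the Hamiltonian in Definition \ref{def:GFF} factorises over the $N$ coordinates, so the vector-valued field is simply a collection of $N$ independent scalar $m^2$-massive GFFs on $G$. It therefore suffices to prove the statement for $N=1$ and then apply it coordinate-wise; in what follows $\phi$ denotes the scalar field.

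Next, define $h^K$ deterministically as the unique solution of the discrete Dirichlet problem
\[
(-\Delta_{G,\mathfrak{g}} + m^2) h^K = 0 \text{ on } G \setminus K, \qquad h^K = \phi \text{ on } K, \qquad h^K = 0 \text{ on } \partial G.
\]
Existence and uniqueness hold because the massive Laplacian $-\Delta_{G,\mathfrak{g}}+m^2$, viewed as an operator on functions on $G \setminus K$ vanishing on $K \cup \partial G$, is symmetric and strictly positive-definite: the Dirichlet constraint on $\partial G$ alone suffices for coercivity, even when $m^2 = 0$. Set $\varphi := \phi - h^K$, which vanishes on $K$ and on $\partial G$; the goal is to show that $\varphi$ is independent of $\F_K$ and distributed as an $m^2$-massive GFF on $G \setminus K$.

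The crucial step is a quadratic identity for the Hamiltonian. Extending all functions by zero on $\partial G$ and summing by parts, one can rewrite
\[
H(\phi) := \tfrac{\mathfrak{g}}{4}\sum_{x \sim y}(\phi(x)-\phi(y))^{2} + \tfrac{m^{2}}{2}\sum_{x \in G}\phi(x)^{2} = \tfrac{1}{2}\bigl\langle \phi,\, (-\Delta_{G,\mathfrak{g}}+m^{2})\phi \bigr\rangle_{\ell^{2}(G)}.
\]
Expanding $\phi = h^K + \varphi$ and using self-adjointness of $-\Delta_{G,\mathfrak{g}}+m^{2}$, the cross term becomes $\bigl\langle \varphi,\, (-\Delta_{G,\mathfrak{g}}+m^{2})h^K \bigr\rangle_{\ell^{2}(G)}$; its summand vanishes on $K$ because $\varphi|_K = 0$ and on $G\setminus K$ because $h^K$ is massive-harmonic there. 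Hence the Hamiltonian splits as $H(\phi) = H(h^K) + H_{G \setminus K}(\varphi)$, where $H_{G \setminus K}(\varphi)$ is the Hamiltonian of an $m^{2}$-massive GFF on $G \setminus K$ with Dirichlet boundary on $K \cup \partial G$, and in particular is functionally independent of $h^K$ and therefore of $\phi|_K$.

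Given this splitting, the conditional density of $\varphi$ given $\phi|_K = \eta$ is proportional to $\exp(-H_{G \setminus K}(\varphi))$, independently of $\eta$. This simultaneously establishes that $\varphi$ is independent of $\F_K$ and distributed as an $m^2$-massive GFF on $G \setminus K$; since that field has zero mean, $\E[\phi \mid \F_K] = h^K + \E[\varphi \mid \F_K] = h^K$, confirming that $h^K$ is indeed the conditional expectation and carries the stated boundary data and harmonicity. The one subtle point, in my view, is the bookkeeping of boundary contributions in the summation by parts: one must verify that the edges between $K$ and $G \setminus K$, and between $G$ and $\partial G$, are correctly accounted for so that the cross term truly cancels. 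Once the discrete Green's identity is written down on $G \cup \partial G$ with the Dirichlet convention $\phi|_{\partial G} = 0$, this becomes routine.
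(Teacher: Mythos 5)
Your argument is correct. Note that the paper does not prove this lemma at all: it simply cites \cite[Lemma 3.1]{Biskup_notes} for the massless case and \cite[Lemma 1.1]{Rod17} for the massive one, and your energy-splitting proof is essentially the standard argument underlying those references, so you have in effect supplied the outsourced proof rather than found a new one. The key points all check out: the coordinate-wise reduction is legitimate since the Hamiltonian factorises over the $N$ components; well-posedness of the Dirichlet problem holds because every connected component of $G\setminus K$ (a finite subset of $\Z^2$) has an edge to $K\cup\partial G$, so the quadratic form is coercive even for $m^2=0$; the Green's-identity cancellation of the cross term $\langle \varphi,(-\Delta_{G,\mathfrak{g}}+m^2)h^K\rangle$ is exactly right given that all neighbours of a vertex of $G$ lie in $G\cup\partial G$ and $\varphi$ vanishes on $K\cup\partial G$; and identifying $H(\varphi)$ with $H_{G\setminus K}(\varphi)$ works because the edges from $G\setminus K$ to its outer boundary (contained in $K\cup\partial G$) carry zero boundary values. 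Two points you pass over silently but which are routine: the factorisation of the joint law uses that, for fixed $\phi|_K=\eta$, the map $\varphi\mapsto h^K[\eta]+\varphi$ is an affine (unit-Jacobian) bijection onto $\{\phi:\phi|_K=\eta\}$, and identifying the conditional expectation with $h^K$ uses integrability, which is immediate for a Gaussian vector. With those remarks your proof is complete and self-contained, which is arguably a small gain over the paper's pure citation.
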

\begin{proof}
    See \cite[Lemma 3.1]{Biskup_notes} and \cite[Lemma 1.1]{Rod17} for the massless and massive cases, respectively. 
\end{proof}

We now introduce the setup of the paper. We take $\Lambda = [-1/2,1/2]^2$ and $D\subseteq \Lambda$ an open set with smooth boundary such that $0 \in D$ and $\upiota:= \mathrm{dist}(D, \partial \Lambda)>0$. For a given $n\in \N$, we consider a discrete blow-up version of $D$ defined as $D_n = (nD) \cap \Z^2$ and the domain $\Lambda_n = [-n/2, n/2]^2 \cap \Z^2$. Let $\alpha \in (0,1)$, $x_0 \in \Z^2$. Abusing notation and writing $n^\alpha$ for the closest even integer to $n^\alpha$, we define a mesoscopic discretization $\Pi_\alpha(x_0)$ of $D_n$ at scale $n^\alpha$ as the set of boxes $B \subset D_n$ of side length $n^\alpha$ with centers in $x_0 + n^\alpha \Z^2$ (see Figure \ref{fig:setup}). We denote the union of the inner boundaries of these boxes $B$ by $\primorial_\alpha(x_0)$, and their centers by $x_B$. For brevity, we write $\Pi_\alpha(x_0) \cap F$ for the subset of boxes $B \in \Pi_\alpha(x_0)$ contained in $F \subset \Z^2$, i.e., $B \subset F$. 
\begin{figure}[ht]
    \centering
    \includegraphics[width=0.7\linewidth]{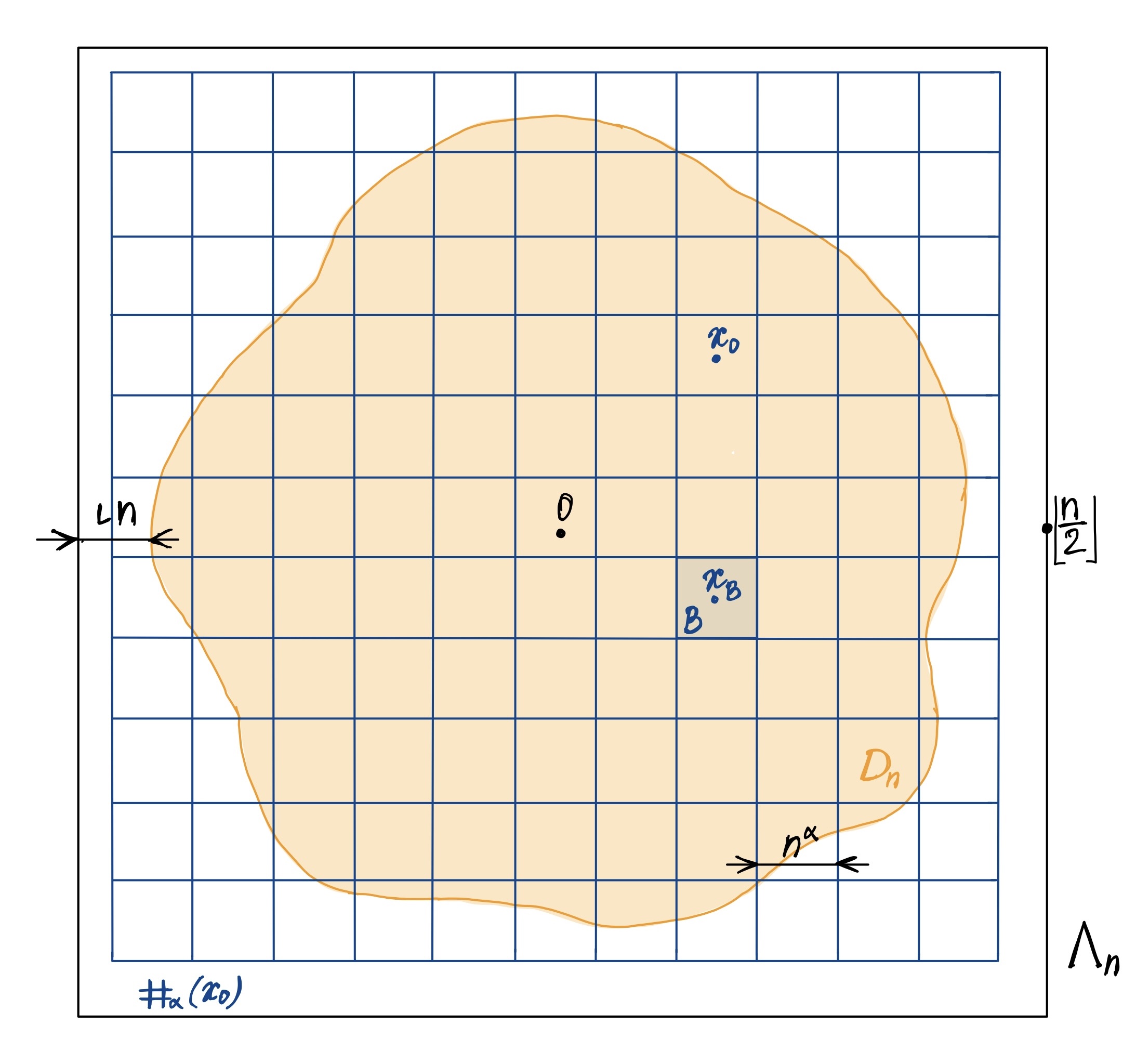}
     \caption{The part of the \emph{blue} subgrid of $\Z^2$ fully contained in $D_n$ is $\primorial_\alpha(x_0)$. The union of boxes $B$ formed by this part of the subgrid is $\Pi_\alpha(x_0)$.}
    \label{fig:setup}
\end{figure} 

Unless stated otherwise, we write $\phi \sim \P_n$ for an ($N$-vectorial) Dirichlet GFF on $\Lambda_n$. Then—as in the domain Markov property (Lemma \ref{lemma:domainMP})—$h^{\primorial_\alpha(x_0)}$ stands for the harmonic extension of its values on $\primorial_{\alpha}(x_0)$, and for better readability, we set $h_B \coloneqq h^{\primorial_\alpha(x_0)}_{x_B}$. Furthermore, for every $B \in \Pi_\alpha(x_0)$, define $\phi^B(x)$ by $(\phi^{\Lambda_n \setminus \primorial_\alpha(x_0)}(x)) \ind_{\{x \in B\}}$. As the direct consequence of the domain Markov property, we get that the family $(\phi^B)_{B \in \Pi_\alpha(x_0)}$ is independent of $\F_\alpha(x_0) \coloneqq \sigma(\phi_x: x \in \,\primorial_\alpha(x_0))$, and has the law of a collection of jointly independent Dirichlet GFFs on the interiors of $B$'s. If $x_0 = 0$, for brevity, we suppress the dependence on $x_0$ in all the above expressions.

Furthermore, we write $\psi$ for an $m^2$-massive field on $\Z^d$ ($d\geq 2$, cf. \cite[Section 8.5]{StatMech_book}), and denote its law by $\P_{\Z^d, m^2}$. Sometimes to highlight the different spin-dimensions used within a sequence of equations we add $N$ to the subscript of $\P_n$, $\P_{\Z^d, m^2}$. More precisely, in the equations involving, say, both $\P_n$ and $\P_{n,N}$ the former stands for the law of the scalar field, while the latter denotes the law of the $N$-vectorial field.

\subsection{Correlations and FKG inequality}
The goal of this section is to recall and prove some technical results related to the (scalar) Dirichlet GFF $\phi$ on $\Lambda_n$. In particular, we first recall an FKG inequality for the GFF conditioned to avoid specific sets and then study the correlation structure of $h^K$.

To respect the above setup for the massless field, we state the FKG inequality for the planar case. However, note that with obvious adjustments to the statement, it holds (with the same proof) for any $d\geq 2$, which we then use in Section \ref{sec:massiveGFF} in our study of the massive GFF on $\Z^d$.
\begin{proposition} \label{prop:FKG_condGFF}
    Let $V$ be a finite subset of $\Lambda_n$ and $(A_x)_{x \in V}$ be a family of subsets of $\R$ consisting of finite unions of possibly infinite intervals. Let $\phi \sim \P$ be either a massless Dirichlet GFF on $\Lambda_n$, or an $m^2$-massive GFF on $\Z^2$ or $\Lambda_n$.
    Then, $\P$ conditioned on the event $\Omega^{A^c}_{V} \coloneqq \{\phi(x) \in A_x, \; \forall x \in V\}$ satisfies the FKG inequality: for any increasing functions $f,g$
    \begin{equation}
        \label{eq:FKG}
        \E \big[f(\phi) g(\phi)~\big\vert~\Omega^{A^c}_V\big] \geq \E \big[ f(\phi) ~\big\vert~\Omega^{A^c}_V\big]\, \E \big[ g(\phi) ~\big\vert~ \Omega^{A^c}_V\big].
    \end{equation}
\end{proposition}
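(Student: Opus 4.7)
The plan is to verify that the conditional density satisfies the FKG lattice condition (the MTP$_2$ property) on $\R^V$, and then invoke a standard continuous lattice-FKG theorem (of Preston--Holley type) for absolutely continuous densities on $\R^V$.

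First I would treat the unconditioned field. Its density is proportional to $p(\phi) = \exp(-\tfrac12 \phi^T Q \phi)$, where the precision matrix $Q$ is (the restriction of) $-\mathfrak{g}\Delta + m^2\,\mathrm{Id}$ and in particular satisfies $Q_{xy} \leq 0$ for $x\neq y$. To check the lattice inequality $p(\phi\vee\phi')p(\phi\wedge\phi')\geq p(\phi)p(\phi')$, the diagonal part of the quadratic form is preserved coordinatewise by $\max(a,b)^2+\min(a,b)^2 = a^2+b^2$, and the off-diagonal part is controlled by the elementary inequality
\begin{align*}
    \max(a,b)\max(c,d) + \min(a,b)\min(c,d) \geq ac + bd.
\end{align*}
Combining these with $Q_{xy}\leq 0$ off the diagonal yields
\begin{align*}
    (\phi\vee\phi')^T Q(\phi\vee\phi') + (\phi\wedge\phi')^T Q(\phi\wedge\phi') \leq \phi^T Q\phi + (\phi')^T Q\phi',
\end{align*}
which is precisely the lattice condition for $p$.

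Second, I would observe that conditioning on $\Omega^{A^c}_V$ multiplies $p$ by the indicator $\prod_{x\in V}\ind_{A_x}(\phi(x))$, and that this indicator satisfies the lattice condition regardless of the structure of $A_x$: if $\phi(x),\phi'(x) \in A_x$, then $\max(\phi(x),\phi'(x))$ and $\min(\phi(x),\phi'(x))$ both lie in $\{\phi(x),\phi'(x)\}\subset A_x$. Hence the conditional density, being a product of two nonnegative functions satisfying the lattice condition, satisfies it as well, and the lattice-FKG theorem delivers \eqref{eq:FKG} for all bounded increasing $f,g$ (and then, by monotone approximation, the unbounded case).

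The finite-union-of-intervals hypothesis is actually not used in the lattice manipulation above; it serves only to guarantee $\P[\Omega^{A^c}_V]>0$ so that the conditioning is meaningful. In the infinite-volume massive setting on $\Z^d$ one first applies the finite-volume statement on a box containing $V$, which transfers to $\Z^d$ by the standard weak-limit argument using integrable decay of the massive covariance. The main (and rather minor) obstacle is the algebraic bookkeeping for the quadratic form in the first step; once that is in place, everything else is mechanical.
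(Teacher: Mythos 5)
Your proposal is correct and takes essentially the same approach as the paper: the paper's proof just cites \cite[Lemma 2.6]{AGS} for the Holley/FKG lattice condition of the Gaussian density with single-site constraints (with the remark that the mass term does not spoil it), which is exactly the quadratic-form computation you carry out, and the proof of \cite[Lemma 1.3]{Rod17} to reduce the infinite-volume massive case to the Dirichlet field on a finite box, matching your finite-volume-plus-limit step.
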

\begin{proof}
    The result can be easily deduced from the proofs of \cite[Lemma 1.3]{Rod17} (to reduce \eqref{eq:FKG} for $\P_{\Z^d, m^2}$ to an analogous statement for the $m^2$-massive Dirichlet GFF on $\Lambda_n$) and \cite[Lemma 2.6]{AGS} (adding a mass-term to the Hamiltonian does not break the Holley condition).
\end{proof}

We now turn to the covariance structure $G_{\Lambda_n}$ of $\phi$, the so-called Dirichlet Green's function\footnote{We remain consistent with our chosen normalization of the GFF, see Remark \ref{rem:normalization}}.
\begin{lemma}[Estimates for the planar Dirichlet Green's function]
\label{lemma:planar_Greens_fct}
    For all $x,y \in \Z^2$, let $G_0(x,y) \coloneqq -\frac{\pi}{2} a(y-x)$ with $a$ being the potential kernel of a simple random walk on $\Z^2$ (cf. \cite[Section 1.6]{lawler2012intersections}). Then, for $x = (x_1, x_2) \in \Z^2$ as $|x|\rightarrow \infty$,
    \begin{align} \label{eq:potkernel_expansion}
        G_0(0,x) = -\log |x| + C_1 + C_2 \frac{x_1^2 x_2^2}{|x|^6} + C_3\frac{1}{|x|^2} + \mathcal{O}(|x|^{-3})
    \end{align}
    for some constants $C_1, C_2, C_3 \in \R$.

    Let $\Lambda_n$ and $D_n$ be as above—in particular, $D_n$ is at distance at least $\upiota n/2$ from $\partial \Lambda_n$ for some $\upiota >0$. Then, there exists $C = C(\upiota) >0$ such that for all $x, y \in D_n$, $y\neq 0$, 
    \begin{equation}
    \begin{aligned}\label{eq:Greensfct_bounds}
        |G_{\Lambda_{n}}(x,x) - \log n|\vee |G_{\Lambda_n} (0,y) - \log (n /|y|)| &\leq C.
    \end{aligned}
    \end{equation}
    Furthermore, there exist $c = c(\upiota) > 0$ and $n_0 = n_0(\upiota) \in \N$ such that for all $n \geq n_0$, for all $x,y \in D_n$,
    \begin{align} \label{eq:Greensdiff_bounds}
        \big|\partial^{(1)}_{e_i}\partial^{(2)}_{e_j} G_{\Lambda_n}(x,y) - \partial^{(1)}_{e_i}\partial^{(2)}_{e_j} G_{0}(x,y)\big| \leq \frac{c(\upiota)}{n^2},
    \end{align}
    where $e_1 = 1, e_2 = \iu \in \Z^2 \subset \C$ and $\partial^{(k)}_{e_l}$ denotes the discrete $e_l$-directional derivative in the $k$-th component: for example, $\partial^{(2)}_{e_j} f(u,v) = f(u, v+e_j) - f (u,v)$. 
\end{lemma}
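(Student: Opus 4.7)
The plan is to derive all three parts from the random-walk representation
\begin{equation*}
    G_{\Lambda_n}(x,y) = G_0(x,y) - \E_x[G_0(S_\tau,y)], \qquad x,y \in \Lambda_n,
\end{equation*}
where $(S_k)$ is a simple random walk started at $x$ and $\tau$ its exit time from $\Lambda_n$. With the normalisation $\mathfrak{g} = 1/(2\pi)$, both $-\Delta_x G_{\Lambda_n}(\cdot,y)$ and $-\Delta_x G_0(\cdot,y)$ equal $2\pi\,\delta_{x=y}$ (using the classical identity $(I-P)a = -\delta_0$), so the difference $G_{\Lambda_n}(\cdot,y) - G_0(\cdot,y)$ is discrete harmonic on $\Lambda_n$ with boundary values $-G_0(\cdot,y)|_{\partial\Lambda_n}$, hence equal to $-\E_x[G_0(S_\tau,y)]$. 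For part~(1), which is a refined classical expansion of the potential kernel (see e.g.\ Kozma--Schreiber or Fukai--Uchiyama), I would use the Fourier representation
\begin{equation*}
    a(x) = \int_{[-\pi,\pi]^2} \frac{1-\cos(x\cdot k)}{1-\hat p(k)} \, \frac{\mathrm{d}k}{(2\pi)^2}
\end{equation*}
together with the expansion $\hat p(k) = 1 - |k|^2/4 + (k_1^4+k_2^4)/48 + \mathcal{O}(|k|^6)$ and a stationary-phase analysis: the $\log|x|+C_1$ piece is the continuum-Green's-function contribution, the $C_2\, x_1^2 x_2^2/|x|^6$ term is the anisotropic correction produced by the quartic part of $\hat p$, and the $\mathcal{O}(|x|^{-3})$ remainder comes from the lattice symmetries $x_i \mapsto -x_i$ killing the $|x|^{-3}$-order contribution.

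For part~(2), I plug the expansion from~(1) into the representation. Since $D_n$ lies at distance $\geq \upiota n /2$ from $\partial\Lambda_n$, for every $y \in D_n$ and every realisation of $\tau$ one has $|S_\tau - y| \in [\upiota n/2,\,\sqrt{2}\, n]$, so $G_0(S_\tau,y) = -\log n + \mathcal{O}_\upiota(1)$ along every path. Taking expectations and combining with $G_0(x,x) = 0$ (resp.\ $G_0(0,y) = -\log|y| + \mathcal{O}(1)$) in the representation yields \eqref{eq:Greensfct_bounds}.

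For part~(3), set $g(x,y) := G_{\Lambda_n}(x,y) - G_0(x,y)$; by the opening paragraph $g(\cdot,y)$ is discrete harmonic on $\Lambda_n$ for each fixed $y \in \Lambda_n$. Fix $y \in D_n$ and define $w_y(x) := \partial^{(2)}_{e_j} g(x,y)$, which is again harmonic in $x$ on $\Lambda_n$ as a difference of two harmonic functions. On $\partial\Lambda_n$ one has $g(x,y) = -G_0(x,y)$, so $w_y(x) = G_0(x,y) - G_0(x,y+e_j)$; the expansion from~(1) together with $|x-y| \geq \upiota n /2$ gives $|w_y(x)| \leq C/(\upiota n)$ on $\partial\Lambda_n$, and therefore $\|w_y\|_{\infty,\Lambda_n} \leq C/(\upiota n)$ by the maximum principle. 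Since $x \in D_n$ sits at distance $\geq \upiota n /2$ from $\partial\Lambda_n$, the standard discrete gradient estimate for harmonic functions (proved e.g.\ by coupling two nearest-neighbour walks started from $x$ and $x+e_i$ until they meet before exiting $\Lambda_n$) then yields $|\partial^{(1)}_{e_i} w_y(x)| \leq C(\upiota)\,\|w_y\|_\infty / n \leq c(\upiota)/n^2$, which is \eqref{eq:Greensdiff_bounds}. The delicate point of the whole proof lies exactly in this last step: to obtain the full $1/n^2$ gain one needs \emph{both} the $\mathcal{O}(1/n)$ smoothness of $G_0$ on $\partial\Lambda_n$ (which is what the refined expansion in~(1) is designed to provide) \emph{and} the gradient estimate on the inside, each contributing one factor of $n^{-1}$; any logarithmic loss at either step would ruin the bound.
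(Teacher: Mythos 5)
Your argument is correct, but note that the paper does not actually prove this lemma: it simply cites \cite[Lemmas 28 and 31]{duerre_thesis} for \eqref{eq:potkernel_expansion} and \eqref{eq:Greensdiff_bounds} and \cite[Theorem 1.6.6, Proposition 1.6.7]{lawler2012intersections} for \eqref{eq:Greensfct_bounds}, so what you have done is reconstruct the standard arguments underlying those references. Your key identity $G_{\Lambda_n}(x,y)=G_0(x,y)-\E_x[G_0(S_\tau,y)]$ is (after unwinding the normalization $\mathfrak{g}=1/(2\pi)$, i.e. $G_{\Lambda_n}=\tfrac{\pi}{2}G^{\mathrm{RW}}_{\Lambda_n}$ and $-\Delta G_{\Lambda_n}(\cdot,y)=-\Delta G_0(\cdot,y)=2\pi\delta_y$) exactly the potential-kernel representation behind Lawler's estimates, and your treatment of \eqref{eq:Greensdiff_bounds}—the mixed difference $w_y(x)=\partial^{(2)}_{e_j}\bigl(G_{\Lambda_n}-G_0\bigr)(x,y)$ is harmonic in $x$ on all of $\Lambda_n$ because the singularities at $x=y$ cancel, is $\mathcal{O}(1/(\upiota n))$ on $\partial\Lambda_n$, hence everywhere by the maximum principle, and then gains another factor $1/n$ from the interior difference estimate for bounded harmonic functions at distance $\gtrsim \upiota n$ from the boundary—is the standard route to the $1/n^2$ bound and is sound, including the case $x$ near $y$. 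Two small remarks: first, \eqref{eq:potkernel_expansion} remains essentially a citation in your write-up as well (Fukai--Uchiyama/Kozma--Schreiber via the Fourier representation), which is on par with the paper's citation of Dürre; second, your closing claim that the refined expansion is needed for the $\mathcal{O}(1/n)$ boundary smoothness overstates matters—the crude expansion $G_0(0,x)=-\log|x|+C_1+\mathcal{O}(|x|^{-2})$ already suffices, since the correction terms are individually $\mathcal{O}(n^{-2})$ at distance of order $n$ and only the discrete gradient of the logarithm contributes at order $1/n$; the refined terms in \eqref{eq:potkernel_expansion} are recorded because they are used elsewhere (in the proof of Lemma \ref{lemma:harmonic_cov_EVs}), not because \eqref{eq:Greensdiff_bounds} requires them.
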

\begin{proof}
    \eqref{eq:potkernel_expansion} and \eqref{eq:Greensdiff_bounds} are \cite[Lemma 28 and Lemma 31]{duerre_thesis}, respectively. For \eqref{eq:Greensfct_bounds}, see \cite[Theorem 1.6.6 and Proposition 1.6.7]{lawler2012intersections}. 
\end{proof}

Next, we discuss the covariance structure of $h^K$. Observe that, as an immediate consequence of the domain Markov property of the GFF (Lemma \ref{lemma:domainMP}), we have
\begin{align} \label{eq:cov_harmext}
    \E_n[h^{K}(x) h^{K}(y)] &= G_{\Lambda_n}(x,y) - G_{\Lambda_n \setminus K}(x,y),
\end{align}
where $G_{\Lambda_n \setminus K}(x,y)$ is the Green's function on $\Lambda_n$ with Dirichlet boundary condition on $K$. We use this identity to study the behavior of $h^{\primorial_\alpha(x_0)}$, for $\alpha \in (0,1)$ and $x_0 \in \Z^2$, by setting $K \coloneqq \, \primorial_\alpha(x_0)$ throughout the rest of this subsection. 

We start by showing that $h^K$ does not vary much close to the centers of the boxes in $\Pi_\alpha(x_0)$:
\begin{lemma}
    There exist constants $\tilde c = \tilde c(\upiota) > 0$ and $n_0 = n_0(\upiota, \alpha) \in \N$ such that for all $n \geq n_0$ and $0 \leq r < 1/4$ (may depend on $n$, e.g., $r = o(1)$ as $n\to \infty$ is included),
    \begin{align}\label{eq:harmdiff_bound}
        \sup \Big\{ \Var_n\big[h^K(x) - h_{B}\big]: B \in \Pi_\alpha(x_0); \;x \in B,\, |x - x_B| \leq r n^\alpha\Big\} \leq \tilde c r^2.
    \end{align}
\end{lemma}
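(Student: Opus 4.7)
The plan is to reduce the variance to a ``doubly second-order'' difference of a function that has no diagonal singularity, and then apply the gradient estimates from Lemma \ref{lemma:planar_Greens_fct} to extract the quadratic factor $r^2$.

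Concretely, by the domain Markov property (Lemma \ref{lemma:domainMP}) and \eqref{eq:cov_harmext}, for $u, v$ in the same box $B \in \Pi_\alpha(x_0)$ we have $\E_n[h^K(u)h^K(v)] = G_{\Lambda_n}(u,v) - G_{\Lambda_n\setminus K}(u,v) = G_{\Lambda_n}(u,v) - G_B(u,v)$, since removing $K = \primorial_\alpha(x_0)$ disconnects the interior of $B$ from the rest. Setting
\begin{align*}
    f(u,v) := G_{\Lambda_n}(u,v) - G_B(u,v),
\end{align*}
one obtains $\Var_n[h^K(x) - h_B] = f(x,x) - 2f(x, x_B) + f(x_B, x_B)$. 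Writing $G_{\Lambda_n} = G_0 + H_n$ and $G_B = G_0 + H_B$, we see that $f = H_n - H_B$; both $H_n$ and $H_B$ are discrete harmonic in each variable on their respective domains (the logarithmic singularity of the Green's functions on the diagonal is entirely carried by the potential kernel $G_0$), so $f$ is itself free of singularities on the interior of~$B$.

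Next, I would invoke estimate \eqref{eq:Greensdiff_bounds} of Lemma \ref{lemma:planar_Greens_fct} to obtain $|\partial^{(1)}_{e_i}\partial^{(2)}_{e_j} H_n(u,v)| \le c(\upiota)/n^2$ for $u,v \in D_n$, and apply the same estimate to the domain $B$ (a square of side $n^\alpha$ at positive distance from $\partial \Lambda_n$) to get $|\partial^{(1)}_{e_i}\partial^{(2)}_{e_j} H_B(u,v)| \le c'/n^{2\alpha}$ for $u, v$ at distance at least a constant fraction of $n^\alpha$ from $\partial B$ — which is guaranteed for $u = x, v = x_B$ since $r < 1/4$ forces them to lie in $B(x_B, n^\alpha/4)$. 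Combining these two bounds gives
\begin{align*}
    \sup_{u,v}\; |\partial^{(1)}_{e_i}\partial^{(2)}_{e_j} f(u,v)| \le c''/n^{2\alpha},
\end{align*}
where the supremum is over the relevant deep interior of $B$, and where the weaker scale $n^\alpha$ dominates. Requiring $n \geq n_0(\upiota, \alpha)$ guarantees that the internal estimate for $G_B$ is applicable.

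Finally, writing the quantity of interest as the mixed second difference
\begin{align*}
    f(x,x) - f(x,x_B) - f(x_B,x) + f(x_B, x_B) = \sum_{j,k} \partial^{(1)}_{e_{j}} \partial^{(2)}_{e_{k}} f(\xi_j, \xi_k),
\end{align*}
obtained by telescoping along any lattice path of length $O(|x - x_B|)$ from $x_B$ to $x$ in each of the two coordinates, I bound the right-hand side by (path length)${}^2$ times $\sup|\partial^{(1)}\partial^{(2)} f|$, which is at most $C|x - x_B|^2 \cdot c''/n^{2\alpha} \le \tilde c\, r^2$ since $|x - x_B| \leq r n^\alpha$. The main (minor) obstacle is the derivative estimate on $G_B$: although \eqref{eq:Greensdiff_bounds} is stated for $\Lambda_n$, the same argument (or a rescaling) applies verbatim to the square $B$, and this is the step where the choice $r < 1/4$ is used to keep $x$ well inside $B$.
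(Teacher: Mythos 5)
Your proposal is correct and follows essentially the same route as the paper: write $\Var_n[h^K(x)-h_B]$ as a mixed second difference of $G_{\Lambda_n}-G_B$, telescope it into discrete mixed derivatives $\partial^{(1)}_{e_i}\partial^{(2)}_{e_j}$ along a lattice path of length $O(rn^\alpha)$, and apply \eqref{eq:Greensdiff_bounds} both at scale $n$ and at scale $n^\alpha$ for the box $B$ (with $r<1/4$ keeping all points well inside $B$). The only cosmetic difference is that you subtract $G_0$ explicitly to form $H_n, H_B$, whereas the paper applies \eqref{eq:Greensdiff_bounds} to each Green's function directly and lets the $G_0$ terms cancel.
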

\begin{proof}
    Note that for a function $f: V^2 \subset (\Z^2)^2 \rightarrow \R$ satisfying $f(u,v) = f(v,u)$ and $k, k' \in \N_0, v \in V$ such that $v + k + k'\iu \in V$, 
    \begin{align*}
        \Lc_{k,k'} f (v, v) &\coloneqq f(v + k + k'\iu, v+k + k'\iu) + f(v,v) - 2f(v, v + k + k'\iu)\\
        &\:= \sum_{m,l = 0}^{k-1} \partial^{(1)}_{e_1} \partial^{(2)}_{e_1} f(v + m + k' \iu, v + l + k'\iu) + \sum_{m',l' = 0}^{k'-1} \partial^{(1)}_{e_2} \partial^{(2)}_{e_2} f(v + m' \iu, v + l'\iu) \\
        &\:+ 2\sum_{l = 0}^{k-1} \sum_{m'=0}^{k'-1} \partial^{(1)}_{e_2} \partial^{(2)}_{e_1} f(v + m' \iu, v + l + k'\iu).
    \end{align*}
    Let $x, x_B \in B \subset D_n$ as in \eqref{eq:harmdiff_bound}—note that, by symmetry, we may assume without loss of generality that $x_1 \geq (x_B)_1$ and $x_2\geq (x_B)_2$. Then, setting $v = x_B$, $k = x_1 - (x_B)_1$, and $k' = x_2 - (x_B)_2$ (so in particular $0 \leq k, k' \leq r n^\alpha$), the above together with \eqref{eq:Greensdiff_bounds} implies
    \begin{align*}
        \Var_n[h^K(x) - h_{B}] &= \Lc_{k,k'} G_{\Lambda_n}(v,v) - \Lc_{k,k'} G_B(v,v) \\
        &\leq (k^2 + (k')^2 + 2 k k') \Big(\frac{c(\upiota)}{n^2} + \frac{c(1/2)}{n^{2\alpha}}\Big) \leq \tilde c r^2
    \end{align*}
    for all $n$ sufficiently large as desired. 
\end{proof}

We are also interested in the covariance matrix of the process of differences of the form $h_B - h_{B'}$ at centers of the neighboring boxes of $\Pi_\alpha(x_0)$ and prove the following result about its diagonal values and eigenvalues.
\begin{lemma}
\label{lemma:harmonic_cov_EVs}
    The covariance matrix $\Sigma$ of the Gaussian process $(Z({B, p}))_{B \in \Pi_\alpha(x_0), p \in \{1, \iu\}}$ defined by $Z({B, p}) = h_{B + n^\alpha p} - h_{B}$ satisfies the following. There exists $\tilde c = \tilde c(\upiota) > 0$\footnote{Note that by definition of $\Pi_\alpha(x_0)$, all the centers $x_B$ as well as $x_B + n^\alpha p$ for $p=1, \iu$ are at distance at least $\upiota n/4$ from $\partial \Lambda_n$ for all $n$ sufficiently large depending on $\alpha$ and $\iota$.} such that for all $n$ sufficiently large (depending on $\upiota, \alpha$)
    \begin{enumerate}
        \item ${\Sigma}_{(B,p), (B, p)} \leq \tilde c$ for all $B \in \Pi_\alpha(x_0)$ and $p = 1, \iu \in \Z^2 \subset \C$;
        \item The maximal eigenvalue of ${\Sigma}$ is bounded by $\tilde c$.\footnote{Since $\Sigma$ is positive semi-definite, the largest eigenvalue of any principal submatrix is also bounded by $\tilde{c}$.} 
    \end{enumerate}
\end{lemma}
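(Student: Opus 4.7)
For part (i), the plan is to compute $\Var_n[Z(B,p)]$ directly from the covariance identity \eqref{eq:cov_harmext}. Since $x_B$ and $x_{B+n^\alpha p}$ lie in distinct connected components of $\Lambda_n \setminus K$ (with $K = \primorial_\alpha(x_0)$), one has $G_{\Lambda_n \setminus K}(x_B, x_{B+n^\alpha p}) = 0$, so
\begin{align*}
\Var_n[Z(B,p)] &= \bigl[G_{\Lambda_n}(x_B, x_B) - G_B(x_B, x_B)\bigr] + \bigl[G_{\Lambda_n}(x_{B+n^\alpha p}, x_{B+n^\alpha p}) - G_{B+n^\alpha p}(x_{B+n^\alpha p}, x_{B+n^\alpha p})\bigr] \\
&\quad - 2\, G_{\Lambda_n}(x_B, x_{B+n^\alpha p}).
\end{align*}
By Lemma \ref{lemma:planar_Greens_fct} together with the scaling of the potential kernel, $G_{\Lambda_n}(x,x) = \log n + O(1)$, $G_{\Lambda_n}(x_B, x_{B+n^\alpha p}) = (1-\alpha)\log n + O(1)$, and $G_B(x_B, x_B) = \alpha \log n + O(1)$. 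All $\log n$ contributions cancel, giving the uniform $O(1)$ bound.

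For part (ii), the operator norm is $\|\Sigma\|_{\mathrm{op}} = \sup_{\|v\|_2 = 1}\Var_n[\sum_{(B,p)} v_{(B,p)} Z(B,p)]$. Writing $Z = Dh$, where $h = (h_B)_B$ and $D$ is the signed incidence matrix of coarse horizontal/vertical edges at scale $n^\alpha$, summation by parts gives $\sum v_{(B,p)} Z(B,p) = \sum_B u_B h_B$ with $u = D^\top v$ satisfying $\sum_B u_B = 0$ and $\|u\|_2 \leq C\|v\|_2$. Using the domain Markov property (Lemma \ref{lemma:domainMP}) and the independence of $\phi^{\Lambda_n \setminus K}$ across distinct boxes,
\begin{align*}
\Var_n\Bigl[\textstyle\sum_B u_B h_B\Bigr] = u^\top \tilde G\, u - \textstyle\sum_B u_B^2\, G_B(x_B, x_B), \qquad \tilde G_{B,B'} = G_{\Lambda_n}(x_B, x_{B'}).
\end{align*}

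Setting $C_h := \tilde G - \mathrm{diag}(G_B(x_B,x_B))$, so that $\Sigma = DC_hD^\top$, and letting $L = D^\top D$ be the Laplacian of the coarse graph on $\Pi_\alpha(x_0)$, the key step is the quadratic-form comparison $C_h \preceq c(\upiota)\, L^{-1}$ on the subspace $\{u: \sum_B u_B = 0\}$. On this subspace both $C_h$ and the pseudo-inverse $L^{-1}$ share the same leading $-\log|x_B - x_{B'}|$ singularity, by Lemma \ref{lemma:planar_Greens_fct} and the analogous scaling of the coarse Dirichlet Green's function. Given this comparison, since $D\mathbf 1 = 0$, the algebraic identity
\begin{align*}
(DL^{-1}D^\top)^2 = DL^{-1}(D^\top D)L^{-1}D^\top = DL^{-1}D^\top
\end{align*}
shows that $DL^{-1}D^\top$ is an orthogonal projection, so $\|\Sigma\|_{\mathrm{op}} = \|DC_hD^\top\|_{\mathrm{op}} \leq c(\upiota)\,\|DL^{-1}D^\top\|_{\mathrm{op}} \leq c(\upiota)$.

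The main obstacle is the quadratic-form comparison above: to control the smooth (boundary-dependent) part of $G_{\Lambda_n}$ uniformly in $n$ and $\alpha$, so that the $O(1)$ discrepancies between $(C_h)_{B,B'}$ and $L^{-1}(B,B')$ do not accumulate when tested against divergence-like vectors. The second-difference estimate \eqref{eq:Greensdiff_bounds} is the main analytic input. An alternative is to expand $u^\top \tilde G u$ directly via \eqref{eq:potkernel_expansion}--\eqref{eq:Greensfct_bounds}, cancel the $\log n$ contributions using $\sum_B u_B = 0$ and cancel the $\alpha \log n$ contribution against $-\sum_B u_B^2 G_B(x_B, x_B)$, and then bound the remaining coarse logarithmic energy $-\sum_{B \neq B'} u_B u_{B'}\log(|x_B - x_{B'}|/n^\alpha)$ by a further summation by parts $u = D^\top v$ combined with a discrete Poincaré-type estimate on the coarse graph.
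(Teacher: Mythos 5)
Your part (i) is correct and coincides with the paper's argument: via \eqref{eq:cov_harmext} and the separation of the two centers by $\primorial_\alpha(x_0)$, the variance of $Z(B,p)$ reduces to a combination of Green's function values whose $\log n$ contributions cancel by \eqref{eq:Greensfct_bounds}, leaving a constant depending only on $\upiota$.

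Part (ii), however, has a genuine gap at precisely the point you yourself flag as ``the main obstacle'': the quadratic-form comparison $C_h \preceq c(\upiota)\,L^{-1}$ on mean-zero vectors (equivalently, the bound on the ``remaining coarse logarithmic energy'' in your alternative route) is asserted but not proved, and the justification offered---that $C_h$ and the pseudo-inverse $L^{-1}$ share the same leading $-\log|x_B-x_{B'}|$ singularity entrywise---does not yield it. There are $\asymp n^{2(1-\alpha)}$ boxes, and the entrywise $O(1)$ discrepancies (from the boundary-dependent harmonic part of $G_{\Lambda_n}$, and from replacing $G_B(x_B,x_B)$ by $\alpha\log n$) can a priori accumulate to an operator norm of the order of the number of boxes when tested against $u = D^\top v$; ruling this out requires decay of the error \emph{after} taking second differences, i.e.\ an estimate at the level of $\Sigma = DC_hD^\top$ rather than of $C_h$. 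Moreover, the inequality you need is essentially the statement that the coarse logarithmic energy is dominated by the coarse Dirichlet energy, which is not elementary---it is exactly the kind of input the paper imports rather than proves. The paper's proof sidesteps your obstacle: it compares $\Sigma$ \emph{entrywise} to the covariance $\Xi$ of the discrete gradient of a Dirichlet GFF on $\Lambda_{n^{1-\alpha}}$, using \eqref{eq:Greensdiff_bounds} and \eqref{eq:potkernel_expansion} to obtain $|\Sigma_{(B,p),(B',q)} - \Xi_{(x,p),(y,q)}| \leq \hat c\,\big(n^{-(2-2\alpha)} + (|x-y|^3\vee 1)^{-1}\big)$, an error matrix with uniformly bounded row sums, and then bounds $\lambda_{\max}(\Xi)\leq \sigma^2$ via \cite[Proposition 2.7]{GS_diff_calc}, which states that adding a suitable independent Gaussian field to the gradient field produces white noise; Cauchy--Schwarz then gives $\lambda_{\max}(\Sigma)\leq \sigma^2 + O(1)$. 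Unless you establish your comparison (or the discrete Poincar\'e-type estimate in your alternative) by such an argument, your proof of part (ii) is incomplete. A further, more minor, point: for boxes $B$ near $\partial D_n$ the neighbour $B + n^\alpha p$ need not belong to $\Pi_\alpha(x_0)$, so the exact form of $C_h$ you use (all off-diagonal terms of $G_{\Lambda_n\setminus K}$ vanishing, diagonal equal to $G_B(x_B,x_B)$) needs an adjustment for those indices.
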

\begin{proof}
    Recall that $\E_n[h^K(x) h^K(y)] = G_{\Lambda_n}(x, y) - G_{\Lambda_n \setminus K}(x,y)$, implying that $\Sigma_{(B,p), (B, p)}$ equals
    \begin{align*}
         G_{\Lambda_n} (x_B + n^\alpha p, x_B + n^\alpha p) + G_{\Lambda_n}(x_B, x_B) - 2 G_{\Lambda_n}(x_B, x_B + n^\alpha p) - 2 G_{B_0}(0,0),
    \end{align*}
    which by \eqref{eq:Greensfct_bounds} is upper-bounded by an appropriate constant $C>0$ depending only on $\upiota$. Here, $B_0$ denotes the box of side length $n^\alpha$ centered at $0 \in \Z^2$.
    
    To prove the second statement, we compare $\Sigma$ to the covariance matrix of the (discrete) gradients of a Dirichlet GFF $\tilde \phi$ on $\Lambda_{n^{1-\alpha}}$, for which an analogous statement follows from \cite[Proposition 2.7]{GS_diff_calc}. 
    More precisely, let $G$ be $\Lambda_{n^{1-\alpha}}$ together with its outer boundary viewed as a planar graph, and $E(G)$ be the set of its undirected edges. Consider the process $\tilde Z({y, p})\coloneqq \tilde \phi(y+p) - \tilde\phi(y)$ for $y \in \Lambda_{n^{1-\alpha}}, p \in \{1, \iu\} = \{e_1, e_2\}$ such that $\{y, y+p\} \in E(G)$, and denote its covariance matrix by $\Xi$. 
    The aforementioned result \cite[Proposition 2.7]{GS_diff_calc} yields the existence of an independent centered Gaussian process $Z'$ such that the sum $Z + Z'$ has the law of $\sigma$ times a standard normal vector indexed by $E(G)$, for some absolute constant $\sigma > 0$. This in turn, implies that the maximal eigenvalue of $\Xi$ is bounded by $\sigma^2$ (since the covariance matrix of the added field $\sigma^2\mathrm{Id} - \Xi$ has to be positive semi-definite). Hence, the largest eigenvalue of any of its principal submatrices—as $\Xi$ is positive semi-definite—is also bounded by $\sigma^2$. 
    
    Observe next that $\Xi_{(x,p), (y, q)} = \partial^{(1)}_{p} \partial^{(2)}_q G_{\Lambda_{n^{1-\alpha}}}(x, y)$, which combined with \eqref{eq:Greensdiff_bounds} shows that 
    \begin{align}
    \label{eq:cov_gradfield}
        \big|\Xi_{(x,p), (y, q)} - \partial^{(1)}_{p} \partial^{(2)}_q G_0(x, y)\big| \leq \frac{c(\upiota/2)}{n^{2(1-\alpha)}} 
    \end{align}
    for all $x, y, x+p, y+q \in \Lambda_{n^{1-\alpha}}$ ($p,q \in \{1,\iu\}$) at distance at least $\upiota n^{1-\alpha}/4$ from the outer boundary of $\Lambda_{n^{1-\alpha}}$, as long as $n$ is sufficiently large, depending on $\upiota, \alpha$. 
    
    On the other hand, similarly to the above, for any $B, B' \in \Pi_\alpha(x_0)$ with $|x_B - x_{B'}| \geq 2n^\alpha$ (so that the four boxes $B, B+n^\alpha p, B'$, and $B'+n^\alpha q$ are disjoint up to the boundaries),
    \begin{equation*}
    \begin{aligned}
        \Sigma_{(B, p), (B', q)} &= G_{\Lambda_n}(x_B + n^\alpha p, x_{B'} + n^\alpha q) + G_{\Lambda_n}(x_B, x_{B'}) \\
        &\qquad \qquad- G_{\Lambda_n}(x_B + n^\alpha p, x_{B'}) - G_{\Lambda_n}(x_B, x_{B'} + n^\alpha q)\\
        &=\sum_{k,l=0}^{n^\alpha - 1} \partial_p^{(1)} \partial_q^{(2)} G_{\Lambda_n}(x_B + k p, x_{B'} + lq).
    \end{aligned}
    \end{equation*}
    And thus, by \eqref{eq:Greensdiff_bounds}, $\Sigma_{(B, p), (B', q)}$ belongs to the interval $\frac{c(\upiota/2)}{n^{2-2\alpha}}(-1, 1)$ shifted by
    \begin{align}\label{eq:gradharm_centers}
        G_{0}(x_B + n^\alpha p, x_{B'} + n^\alpha q) + G_{0}(x_B, x_{B'})- G_{0}(x_B + n^\alpha p, x_{B'}) - G_{0}(x_B, x_{B'} + n^\alpha q). 
    \end{align}

    The goal is now to compare the shift in \eqref{eq:gradharm_centers} to $\partial^{(1)}_{p} \partial^{(2)}_q G_0(x, y)$ in \eqref{eq:cov_gradfield} for appropriate $x,y$. To this end, set $x \coloneqq \frac{x_B - \hat{x}_0}{n^\alpha}$ and $y \coloneqq \frac{x_{B'}-\hat{x}_0}{n^\alpha}$, where $\hat{x}_0$ is the unique element in $(x_0 + \Z^2) \cap \Lambda_{n^\alpha}$. With this choice, $x, y, x+p, x+q \in \Lambda_{n^{1-\alpha}}$ are clearly at distance at least $\upiota n^{1-\alpha}/4$ from the outer boundary of $\Lambda_{n^{1-\alpha}}$. Thus, by \eqref{eq:potkernel_expansion},
    \begin{align*}
        G_{0}(x_B &+ n^\alpha p, x_{B'} + n^\alpha q) + G_{0}(x_B, x_{B'}) - G_{0}(x_B + n^\alpha p, x_{B'}) - G_{0}(x_B, x_{B'} + n^\alpha q) \\
        &= \frac{1}{4} \big(a(y+ q - x) + a(y- x - p) - a(y - x) - a(y + q - x -p)\big) + \mathcal{O}(|x-y|^{-3})\\
        &= \partial_p^{(1)}\partial_q^{(2)}G_0(x,y) + \mathcal{O}(|x-y|^{-3})
    \end{align*}
    as $|x-y|$ (together with $n$) tends to infinity. This allows us to conclude that there exists $\hat c = \hat c(\upiota) > 0$ such that for all $B, B' \in \Pi_\alpha(x_0), p, q \in \{1, \iu\}$ and $x, y$ (defined in terms of $x_B$, $x_{B'}$ and $\hat{x}_0$) as above,
    \begin{align}\label{eq:cov_diff_comaparison}
        \big|\Sigma_{(B, p), (B', q)} - \Xi_{(x,p), (y, q)}\big| \leq \hat{c} \bigg(\frac{1}{n^{2-2\alpha}} + \frac{1}{|x-y|^3 \vee 1} \bigg)
    \end{align}
    for all $n$ sufficiently large (depending on $\alpha$ and $\upiota$). 
     
    Finally, if $U$ denotes the set of vertices $\big\{\frac{x_B - \hat{x}_0}{n^\alpha}: B \in \Pi_\alpha(\hat{x}_0)\big\}$, \eqref{eq:cov_diff_comaparison} and Cauchy-Schwarz inequality imply, 
    \begin{align*}
        \lambda_{\max}(\Sigma) &= \sup_{v\in \R^{\Pi_\alpha(x_0)}: \norm{v} = 1} \langle v, \Sigma v\rangle \\
        &\leq \sup_{v\in \R^{\Pi_\alpha(x_0)}: \norm{v} = 1} \langle v, \Xi\vert_{U \times U} v\rangle 
        + \hat{c} \bigg(\frac{|\Pi_\alpha(x_0)|}{n^{2-2\alpha}} + \sup_{x \in U} \sum_{y\in U} \frac{1}{1 \vee |x-y|^3}\bigg) \\
        &\leq \lambda_{\max}(\Xi) + \hat{c}K \leq \sigma^2 + \hat{c}K,
    \end{align*}
    where $K = 2 + \sum_{y \in \Z^2\setminus\{0\}} |y|^{-3} < \infty$.
\end{proof}


\section{Range of the planar vector-valued Gaussian free field} 
\label{sec:2D_GFF_range}

In this section, we study the range of the $N$-vectorial planar Dirichlet Gaussian free field $\phi\sim \P_n$ on $\Lambda_n$ and prove that there is no hole of fixed size at any given point $-t$ in the bulk of the range of the vector-valued field:
\begin{theorem} \label{thm:2Dno_hole}
    Let $\varepsilon > 0$, $I$ be a bounded subset of $\R^N$ with non-empty interior, and $r_n > 0$ be such that $\frac{\log (1/r_n)}{\log n} = o(1)$. Then,
    \begin{align*} 
        \sup \Big\{\P_n\Big[\phi(x) + t \notin I, \;\forall x \in \Lambda_n \cap \mathcal{B}^2(0, r_n n)\Big]: t\in \B^N \big(0,  2(1-\varepsilon)\log n\big) \Big\} \xrightarrow{n\to \infty} 0.
    \end{align*} 
    Here and in the sequel, $\B^k(0,s) \subset \R^k$ denotes the centered $k$-dimensional ball of radius $s$.
\end{theorem}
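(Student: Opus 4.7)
The plan is to work at a mesoscopic scale $n^\alpha$ with some $\alpha \in (1-\varepsilon, 1)$, use the domain Markov property (Lemma~\ref{lemma:domainMP}) to split $\phi$ into mutually independent Dirichlet GFFs on the boxes $B \in \Pi_\alpha(x_0) \cap \B^2(0, r_n n)$, and show that, uniformly in $t$, a positive fraction of these boxes contains a point $x$ with $\phi(x) + t \in I$ with conditional probability bounded below, by means of a vector-valued adaptation of the high-point bound of \cite[Theorem~1.3]{extremesGFF}. Shrinking $I$ if necessary, I will assume $I = \B^N(0, \delta_0)$ for some $\delta_0 > 0$. By the assumption $\log(1/r_n) = o(\log n)$, the number of mesoscopic boxes is of order $n^{2(1-\alpha) - o(1)}$.

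First I would apply the domain Markov property with $K = \primorial_\alpha(x_0)$ to obtain $\phi = \phi^B + h^K$ on each $B$, with $(\phi^B)_B$ mutually independent and independent of $\F_\alpha(x_0)$. Next, combining Lemma~\ref{lemma:pos_centers_estimate} with the coordinate-wise sign-flip symmetry of the GFF, a positive fraction of the boxes in $\Pi_\alpha(x_0) \cap \B^2(0, r_n n)$ satisfies $\sign(h_B^i) = \sign(-t^i)$ for every $i \in \{1, \dots, N\}$; applying Markov's inequality to $\E[|h_B|^2] = O(\log n)$ then further reduces to a positive fraction of these with $|h_B| \leq C_0$, for a sufficiently large constant $C_0$. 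Call the resulting collection of ``good'' boxes $\mathcal{G}$; on a high-probability event, $|\mathcal{G}| \geq c\, n^{2(1-\alpha) - o(1)} \to \infty$. For $B \in \mathcal{G}$, setting $v_B := -t - h_B$, the above estimates yield $|v_B| \leq |t| + C_0 \leq 2(1-\varepsilon)\log n + C_0$, so $\tilde\eta_B := |v_B|/(2\alpha \log n) < 1$ uniformly, for $n$ large.

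The core step is then a uniform per-box lower bound: conditionally on $\F_\alpha(x_0)$ and on $B \in \mathcal{G}$, the probability that some $x$ in the interior of $B$ satisfies $\phi^B(x) \in v_B + \B^N(0, \delta_0/2)$ is at least $p > 0$, with $p$ depending only on $\varepsilon$ and $\delta_0$. Since the $N$ coordinates of $\phi^B$ are i.i.d.\ scalar Dirichlet GFFs on $B$ with variance $\alpha \log n + O(1)$ at interior points, a Gaussian density calculation gives an expected number of such $x$ of order $n^{2\alpha(1 - \tilde\eta_B^2) - o(1)} \to \infty$, and a truncated second-moment argument adapted from the scalar case of \cite{extremesGFF} then yields the required lower bound. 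Combined with \eqref{eq:harmdiff_bound}, which absorbs the residual $h^K(x) - h_B$ for $x$ close to $x_B$ into the slack $\delta_0/2$, this translates into $\phi(x) + t \in I$ for some $x \in B$.

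The conclusion follows by conditional independence: on the high-probability event for $\mathcal{G}$, the conditional probability that no $B \in \mathcal{G}$ contains such an $x$ is at most $(1-p)^{|\mathcal{G}|} \to 0$, uniformly in $t$. The main obstacle will be the precise vector-valued extension of the second-moment method of \cite{extremesGFF}: while the first moment is immediate by the independence of coordinates, upgrading it to a uniform-in-$v_B$ lower bound on the probability of existence requires carefully tracking the joint Gaussian correlations across the $N$ coordinates at pairs of nearby points, together with the usual truncation to control the contribution of close pairs.
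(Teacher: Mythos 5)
Your overall architecture (mesoscopic boxes, domain Markov property, independence across boxes) is the same as the paper's, but you shift the burden from the harmonic extension to in-box thick points, and both places where the real work lives have gaps. The first is the construction of $\mathcal{G}$. Lemma \ref{lemma:pos_centers_estimate} gives, with high probability, only $n^{2(1-\alpha)-\xi}$ sign-matched boxes — a vanishing, not positive, fraction of $\Pi_\alpha(x_0)\cap \B^2(0,r_nn)$ — and the subsequent step ``Markov's inequality applied to $\E[|h_B|^2]=O(\log n)$ reduces to a positive fraction with $|h_B|\le C_0$'' is simply false: each coordinate of $h_B$ has variance of order $(1-\alpha)\log n$, so a typical box has $|h_B|\asymp\sqrt{\log n}$, the Markov bound at a constant threshold is vacuous, and since essentially \emph{all} boxes violate $|h_B|\le C_0$ you cannot get the intersection with the sign-matched boxes by subtracting counts. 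What you actually need is many boxes with $h_B$ componentwise small relative to $\log n$ (say $|h_B|\le\delta\log n$, which already suffices for $\tilde\eta_B<1-\varepsilon'$ once $\alpha$ is close to $1$), and establishing that this count is at least $n^{\kappa}$ with high probability is precisely the content of the paper's Proposition \ref{prop:no_hole_harmonic} (the $s=0$ case: Lemma \ref{lemma:pos_centers_estimate} plus a first-moment bound on boxes with a coordinate exceeding $\delta\log n$, with the constraint $\xi<\delta^2$; the $s\neq 0$ case needs Proposition \ref{prop:Dav} and the parameter bookkeeping of Lemma \ref{lemma:inequalities_non_empty}). This is the heart of the proof and your proposal replaces it with an invalid reduction.

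The second gap is the ``core step''. What you need per box is not Daviaud's high-point count but a lower bound on the probability that the $N$-vector $\phi^B$ lands in a ball of \emph{constant} radius $\delta_0/2$ around a prescribed vector of norm up to $2(1-\varepsilon')\alpha\log n$, at some point that moreover must be close enough to $x_B$ for the residual $h^K(x)-h_B$ to be negligible. Neither ingredient can be cited: \cite{extremesGFF} gives one-sided level sets for a scalar field, not an $O(1)$-window, all-coordinates-simultaneously statement, and the truncated second moment for such windows (with the correct uniformity in $|v_B|$ and in the direction) is a genuine piece of work, not an adaptation one can wave at. Moreover \eqref{eq:harmdiff_bound} is a pointwise variance bound: for $x$ at distance $rn^\alpha$ from $x_B$ with $r$ of constant order, $h^K(x)-h_B$ is only typically $O(r)$, and you cannot absorb it into a fixed slack at a random location without either localizing the search to radius $o(n^\alpha)$ (e.g. $n^\alpha\log^{-2}n$, as in the proof of Proposition \ref{prop:few_low_boxes}) and doing a union bound, or a chaining argument — none of which appears in your outline. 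Note that the paper avoids this entire difficulty: once $h_B$ is within $\delta\log n$ of $-t$ on $n^\kappa$ boxes, it only uses the single-point Gaussian probability $n^{-\delta^2N/(2\alpha)(1+o(1))}$ at the box centre, which beats $(1-\cdot)^{n^\kappa}$ by the choice $\delta^2<2\alpha\kappa/N$; the price is exactly the careful control of the $h_B$-statistics that your proposal skips. Finally, the uniformity in $t$ (which the paper handles by a compactness/subsequence reduction to a fixed $s_*$) would also need to be threaded through your per-box estimate rather than asserted.
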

\begin{remark}
    \label{rem:max_2D_vectorial}
    In the scalar case, it is known that the supremum of $\phi$ over $\Lambda_n$ equals $(2 + o(1)) \log n$, where $o(1) \to 0$ as $n\to \infty$ in probability (cf. \cite[Theorem 2]{entropRep2D}). This result straightforwardly translates to the vector-valued case in the sense that $\sup_{x\in \Lambda_n} |\phi(x)| = (2 + o(1)) \log n$ in probability. The lower bound for the supremum follows from the fact that $|\phi(x)|\geq \phi^1(x)$, while the upper bound is a consequence of the union bound, \eqref{eq:Greensfct_bounds}, and classical concentration inequalities (for example, Laurent-Massart bounds) for a standard normal vector.   
\end{remark}

Observe that in Theorem \ref{thm:2Dno_hole}, one can without loss of generality restrict to $r_n \leq 1/4$ such that $\B^2(0, r_n n) \cap \Z^2 \subset D_n$. Under this additional assumption, we work in the remainder of this section.

The proof of Theorem \ref{thm:2Dno_hole} is based on analyzing the harmonic extension of the values of the vector-valued GFF on $\primorial_\alpha$, and in particular on estimating the number of boxes for which $h_B/\log n$ is close to a given value. More precisely, for $s \in \B^N(0, 2)$ and $\delta >0$, we define
\begin{align*}
    N^\alpha_{s-\delta, s+\delta} &\coloneqq \#\bigg\{B \in \Pi_\alpha \cap \B: s-\delta< \frac{h_B}{\log n} < s + \delta\bigg\}.
\end{align*}
Here and throughout this section, for brevity, we set $\B \coloneqq \B^2(0, r_n n)$. Moreover, for any $k\in \N$, vectors $v_1,v_2 \in \R^k$, and $u \in \R$, we write $v_1 + u < v_2$ to mean the component-wise relation $v_1^i + u < v_2^i$ for all $i\in \{1,..,k\}$.

The key input for the proof of Theorem \ref{thm:2Dno_hole} can now be formulated as follows.
\begin{proposition}\label{prop:no_hole_harmonic}
    For any $\varepsilon > 0$, $s\in \R^N$ with $s^1\geq \dots \geq s^N \geq 0$ and $|s| \leq 2(1-\varepsilon)$, there exist parameters $\alpha\in (0,1)$, $\kappa \in (0, 2(1-\alpha))$, and $\delta \in (0,\sqrt{2\alpha \kappa/N})$ such that 
    \begin{align*}
        \lim_{n\to \infty} \P_{n}\big[ N^{\alpha}_{s-\delta, s+\delta} \leq n^\kappa\big] =0. 
    \end{align*}
\end{proposition}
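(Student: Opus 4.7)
The plan is a second-moment / Paley--Zygmund argument for $N^{\alpha}_{s-\delta,s+\delta}$, tensorized over the $N$ independent coordinate GFFs and modelled on the counting of $\eta$-high points of the scalar 2D GFF carried out in \cite{extremesGFF}.

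\emph{Parameter choice.} First I choose $\alpha\in(0,\varepsilon)$ (so that $|s|\leq 2(1-\varepsilon)<2(1-\alpha)$), set $\Theta:=2(1-\alpha)-\tfrac{|s|^2}{2(1-\alpha)}>0$, pick $\kappa\in(0,\Theta)\subset(0,2(1-\alpha))$, and finally $\delta\in(0,\sqrt{2\alpha\kappa/N})$ small enough that all the Gaussian-density approximations below hold up to sub-polynomial errors.

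\emph{First moment.} By the domain Markov identity \eqref{eq:cov_harmext} together with the Green's function estimates \eqref{eq:Greensfct_bounds} (applied both to $\Lambda_n$ and to the box $B$, yielding $G_{B}(x_B,x_B)=\alpha\log n+O(1)$), the vector $h_B$ is centered Gaussian with $N$ i.i.d.\ coordinates of variance $(1-\alpha+o(1))\log n$, uniformly in $B\in\Pi_\alpha\cap\B$. A direct Gaussian-density calculation then gives
\begin{align*}
\P_n\!\left[\tfrac{h_B}{\log n}\in\textstyle\prod_{i=1}^{N}(s^i-\delta,s^i+\delta)\right]=n^{-|s|^2/(2(1-\alpha))+o(1)},
\end{align*}
and since $|\Pi_\alpha\cap\B|=n^{2-2\alpha-o(1)}$ (the error is $o(1)$ as $\log(1/r_n)=o(\log n)$), I conclude $\E_n[N^{\alpha}_{s-\delta,s+\delta}]=n^{\Theta-o(1)}\gg n^{\kappa}$.

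\emph{Second moment (the main obstacle).} A naive pairwise bound fails when $|s|>\sqrt{2}(1-\alpha)$: pairs $B,B'$ at intermediate separation $|x_B-x_{B'}|=n^\rho$, $\rho\in(\alpha,1)$, can contribute more than $\bigl(\E_n[N^{\alpha}]\bigr)^2$ because $\mathrm{Cov}(h_B^i,h_{B'}^i)\approx\log(n/n^\rho)$ is comparable to the variance. The standard remedy, implemented in \cite{extremesGFF} in the scalar case, is to count only those boxes $B$ whose harmonic extensions $h^{\primorial_\beta}(x_B)$ at every intermediate mesoscopic scale $n^\beta$, $\beta\in[\alpha,1]$, stay within a scale-adapted tube around the linear interpolation between $0$ and $s\log n$; the constraint $\delta<\sqrt{2\alpha\kappa/N}$ guarantees that the bottom-scale window is narrow enough to preserve the first-moment regime while still allowing the $n^\kappa$-sized lower bound on the truncated count. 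For this truncated count, pairs at separation $n^\rho$ contribute at most $(\E_n N^{\alpha})^2\cdot n^{-c(\rho)}$ with $c(\rho)>0$ for $\rho<1$, via the covariance estimates of Lemma \ref{lemma:planar_Greens_fct}; the $N$-vectorial extension is conceptually clean thanks to the independence of the coordinate fields, which is also what forces the $1/\sqrt{N}$ dependence of the admissible $\delta$ (each additional coordinate contributes an independent density factor to balance). Summing over all separations yields $\E_n[(N^{\alpha}_{s-\delta,s+\delta})^2]\leq C\bigl(\E_n N^{\alpha}_{s-\delta,s+\delta}\bigr)^2$, and Paley--Zygmund then delivers $\P_n[N^{\alpha}_{s-\delta,s+\delta}\leq n^\kappa]\to 0$. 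The core difficulty is precisely this truncated second-moment bound; the scalar ingredient of \cite{extremesGFF} is the non-trivial input, and the vectorial extension is essentially bookkeeping.
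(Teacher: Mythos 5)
Your strategy has a genuine gap at the very last step: a Paley--Zygmund (or Chebyshev) argument based on $\E_n[(N^{\alpha}_{s-\delta,s+\delta})^2]\leq C\,\E_n[N^{\alpha}_{s-\delta,s+\delta}]^2$ with a constant $C>1$ only yields $\liminf_n \P_n[N^{\alpha}_{s-\delta,s+\delta}> n^\kappa]\geq 1/C'>0$, not the convergence $\P_n[N^{\alpha}_{s-\delta,s+\delta}\leq n^\kappa]\to 0$ that the proposition asserts (and that is needed downstream, since in the proof of Theorem \ref{thm:2Dno_hole} this probability must be $o(1)$). Moreover, the constant $C$ cannot be pushed to $1+o(1)$, truncation or not: two boxes at macroscopic separation still have covariance of order $1$ (by \eqref{eq:Greensfct_bounds}, $G_{\Lambda_n}(x,y)=\log(n/|x-y|)+O(1)$), and tilting the joint window event at height $s\log n$ by an $O(1)$ covariance multiplies the product of marginals by a factor bounded away from $1$; summing over the $\asymp n^{2-2\alpha}$ far-away pairs then gives $\E_n[N^2]\geq (1+c)(\E_n N)^2$ with $c>0$. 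So the second-moment route, as stated, proves a strictly weaker statement, and since the window event is not monotone there is no FKG or zero--one device available to upgrade a constant lower bound to $1-o(1)$.

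This is precisely the obstruction the paper's proof is organized around, and it takes a different two-tier route rather than a direct truncated second moment. The only place where a ``probability tending to one'' statement is extracted from a Chebyshev bound is Lemma \ref{lemma:pos_centers_estimate}, which counts \emph{positive} boxes: there the per-box event has constant probability $2^{-N}$ and, crucially, the correlation of the sign indicators for well-separated boxes is $O(\log(1/s_n)/\log n)=o(1)$ by the arcsine formula \eqref{eq:arcsin_formula}, so the variance really is $o((\E)^2)$ — something that fails for your polynomially rare window events. The passage from this coarse-scale positivity count to the fine-scale count at height $(s-\delta)\log n$ is then done via the recursive scheme of \cite{extremesGFF} (imported as Proposition \ref{prop:Dav}), which, conditionally on the coarse field, uses independence across coarse boxes to get \emph{exponentially} small failure probabilities at every subsequent scale; the two-sided window is recovered from one-sided counts $N^\alpha_{>v}$ by Markov-type first-moment bounds, and a separate parameter-compatibility lemma (Lemma \ref{lemma:inequalities_non_empty}) makes the constraints $\kappa<2(1-\alpha)$, $\delta<\sqrt{2\alpha\kappa/N}$ consistent with the constraints of Proposition \ref{prop:Dav}. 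If you want to salvage your plan, you would have to replace Paley--Zygmund by exactly this kind of conditional, multi-scale concentration argument — at which point you are re-proving the Daviaud input rather than using it as a black box.
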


We first explain how this proposition implies Theorem \ref{thm:2Dno_hole}.
\begin{proof}[Proof of Theorem \ref{thm:2Dno_hole} assuming Proposition \ref{prop:no_hole_harmonic}]
Our goal is to reduce the problem to studying $s_* \log n$ at a suitable deterministic point $s_*$, instead of taking the supremum over arbitrary $t \in \B^N(0, 2(1-\varepsilon)\log n)$. Note that since $\tilde \phi \coloneqq (\sgn(t^{\sigma(i)}) \phi^{\sigma(i)})_{i=1}^N$, where $\sigma$ is a deterministic permutation of ${1,\dots,N}$ and $\sgn(u) = \ind_{\{u \geq 0\}} - \ind_{\{u < 0\}} \in \{-1,1\}$, has the same law as $\phi$, it suffices to consider $t \in \B^N(0, 2(1-\varepsilon)\log n)$ with $t^1\leq \dots \leq t^N\leq 0$. We then define
    \begin{align*}
        s^i \coloneqq \frac{-t^i}{\log n}, \; \forall 1\leq i\leq N,
    \end{align*}
    so that $s^1 \geq \dots \geq s^N \geq 0$ and $|s| < 2(1-\varepsilon)$. Observe that for each fixed $n$, the function $f_n: \R^N \to [0,1]$ given by
    \begin{align*}
        f_n(v) \coloneqq \P_n\Big[\phi(x) - v \log n \notin I, \;\forall x \in \Lambda_n \cap \mathcal{B}^2(0, r_n n) \Big]
    \end{align*}
    is continuous; and hence, its supremum over the compact set $V \coloneqq \overline{B^N(0, 2(1-\varepsilon))}\cap \{v \in \R^N: v^1 \geq \dots \geq v^N \geq 0\}$ is attained at some point $s(n) \in V$. To complete the proof of the theorem, it therefore suffices to show that 
    \begin{align*}
        p_* \coloneqq \limsup_{n\to \infty} f_n(s(n)) = 0.
    \end{align*}
    By compactness of $V$, every subsequence $(s(n_k))_k$ has a further convergent subsequence $(s(n_{k_j}))_j$. It is enough to verify that $f_{n_{k_j}}(s(n_{k_j})) \to 0$ as $j \to \infty$, since every convergent subsequence of a sequence converging to $p_*$ has the same limit, implying $p_* = 0$.
    Taking this into account, and to simplify notation, we may assume directly that $s(n) \to s_*$ for some $s_* \in \overline{\B^N(0, 2(1-\varepsilon))}$ with $s^1_*\geq \dots \geq s_*^N \geq 0$.
   
    We now aim to apply Proposition \ref{prop:no_hole_harmonic} with $s = s_*$, so let $\alpha, \kappa$, and $\delta$ be as specified there. Note that since $\kappa < 2(1-\alpha)$, $n^\kappa \ll |\Pi_\alpha \cap \B|$ by our assumptions on the asymptotic behavior of $r_n$. By the domain Markov property and \eqref{eq:Greensfct_bounds}, since $I$ is bounded and with non-empty interior (in particular, contains an $N$-cube of positive side-length), it follows that for all $n$ sufficiently large
    \begin{align}
        f_n(s(n)) &\leq \P_{n}\big[\phi^B_{x_B}+ h_B  -s(n) \log n \notin I, \;\forall B \in \Pi_\alpha \cap \B\big]\nonumber\\
        &\leq \P_{n}\big[ N^{\alpha}_{s_*-\delta, s_*+\delta} \leq n^\kappa\big] 
        + \sup_{u \in [-\delta, \delta]^N} \P\Big[\sqrt{G_{\Lambda_{n^{\alpha}}}(0,0)}X + (1+o(1))u\log n \notin I\Big]^{n^\kappa} \nonumber\\
        &\leq \P_{n}\big[ N^{\alpha}_{s_*-\delta, s_*+\delta} \leq n^\kappa\big] 
        + \Big( 1- n^{-\frac{\delta^2 N}{2\alpha}(1+o(1))}\Big)^{n^\kappa}\nonumber\\
        &\leq \P_{n}\big[ N^{\alpha}_{s_*-\delta, s_*+\delta} \leq n^\kappa\big] 
        + e^{-n^{\kappa - \frac{\delta^2 N}{2\alpha}(1+o(1))}}, \label{eq:aux_to_upper_bound}
    \end{align}
    where $X$ denotes a standard normal vector in $\R^N$. The first summand in \eqref{eq:aux_to_upper_bound} vanishes in the limit $n\to \infty$ by Proposition \ref{prop:no_hole_harmonic}, while the second since $\delta < \sqrt{2 \alpha \kappa/N}$.    
\end{proof}

We now turn to the proof of Proposition \ref{prop:no_hole_harmonic}. The idea is to reduce the problem to studying the number of boxes for which $h_B/\log n$ is greater than a given vector, rather than working directly with $N^\alpha_{s-\delta, s+\delta}$. One must furthermore distinguish between the cases $s=0 \in \R^N$ and $s \neq 0$. The key input in the former case is the following lemma (proved below in the discussion of the $s=0$ case), which states that the number of \emph{positive} boxes,
\begin{align*}
    N^\alpha_+ \coloneqq \#\big\{B \in \Pi_\alpha \cap \B: h_B > 0\big\},
\end{align*}
cannot be much smaller than the total number of boxes in $\Pi_\alpha \cap \B$.
\begin{lemma}[Number of positive boxes] \label{lemma:pos_centers_estimate}
    Let $\alpha \in (0,1)$ and $\xi > 0$. Then
    \begin{align*}
        \lim_{n \to\infty} \P_n\big[ N^\alpha_+ \leq n^{2(1-\alpha) - \xi}\big] = 0.
    \end{align*}
\end{lemma}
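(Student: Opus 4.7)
The plan is a second-moment (Chebyshev) argument applied to the indicators $X_B \coloneqq \ind_{\{h_B > 0\}}$ for $B \in \Pi_\alpha \cap \B$. Here $h_B > 0$ means that each of the $N$ coordinates of $h_B$ is strictly positive; by independence of the coordinate fields and the symmetry of each, $\E_n[X_B] = 2^{-N}$, so $\E_n[N_+^\alpha] = 2^{-N} M$ with $M \coloneqq |\Pi_\alpha \cap \B| \geq c\,r_n^2 n^{2(1-\alpha)}$. The hypothesis $\log(1/r_n) = o(\log n)$ forces $M \geq n^{2(1-\alpha) - \xi/2}$ for all large $n$, so it is enough to prove $\Var_n(N_+^\alpha) = o(M^2)$: Chebyshev will then yield $\P_n[N_+^\alpha \leq 2^{-N-1} M] \to 0$, which implies the lemma.

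For the variance, the orthant formula for a centred jointly Gaussian pair together with the independence of the $N$ coordinates gives
\[
    \E_n[X_B X_{B'}] = \Big(\tfrac14 + \tfrac{1}{2\pi}\arcsin \rho_{B,B'}\Big)^{N},
\]
where $\rho_{B,B'} \geq 0$ denotes the (coordinate-wise) correlation between $h_B$ and $h_{B'}$. Combining $\arcsin\rho \leq \tfrac{\pi}{2}\rho$ on $[0,1]$ with the mean-value inequality $(a+b)^N - a^N \leq N(a+b)^{N-1}b$, applied at $a = 1/4$ and $b \leq \rho_{B,B'}/4$, yields the uniform estimate $\mathrm{Cov}_n(X_B, X_{B'}) \leq C_N\,\rho_{B,B'}$; it then remains to show $\sum_{B \neq B'} \rho_{B,B'} = o(M^2)$.

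By \eqref{eq:cov_harmext} and the fact that $\primorial_\alpha$ separates distinct boxes (so $G_{\Lambda_n \setminus \primorial_\alpha}(x_B, x_{B'}) = 0$), $\E_n[h_B h_{B'}] = G_{\Lambda_n}(x_B, x_{B'})$ and $\Var_n(h_B) = (1-\alpha)\log n + O(1)$ by \eqref{eq:Greensfct_bounds}. Using the bulk version of \eqref{eq:Greensfct_bounds}---namely $G_{\Lambda_n}(x, y) \leq \log(n/|x-y|) + O(1)$ for $x, y \in D_n$, which follows from \eqref{eq:potkernel_expansion} and the standard killed random-walk representation of $G_{\Lambda_n}$---a polar approximation of the sum by an integral gives, uniformly in $B$,
\[
    \sum_{B' \in \Pi_\alpha \cap \B} \log\Big(\tfrac{n}{|x_B - x_{B'}|}\Big) \leq \frac{2\pi}{n^{2\alpha}}\int_{n^\alpha}^{2 r_n n} r\log(n/r)\, dr \lesssim M \log(1/r_n).
\]
Dividing by $(1-\alpha)\log n$ yields $\sum_{B' \neq B}\rho_{B,B'} \lesssim M \log(1/r_n)/\log n = o(M)$ by the hypothesis on $r_n$. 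Summing over $B$ and adding the $O(M)$ diagonal contribution gives $\Var_n(N_+^\alpha) = o(M^2)$, as required.

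The one mildly delicate point I anticipate is confirming the uniform bulk Green's function estimate $G_{\Lambda_n}(x, y) \leq \log(n/|x-y|) + O(1)$ for $x, y$ both ranging over $D_n$, since the excerpt records this only for $x = 0$; it is however a routine consequence of \eqref{eq:potkernel_expansion} and the killed random-walk representation of $G_{\Lambda_n}$. Beyond this, the argument is a clean Chebyshev bound, with the vector dimension $N$ entering only through harmless combinatorial constants $C_N$.
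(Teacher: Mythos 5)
Your proposal is correct and is essentially the paper's own argument: a Chebyshev/second-moment bound on $N^\alpha_+$ using the Gaussian orthant formula \eqref{eq:arcsin_formula} together with the Green's function estimates behind \eqref{eq:cov_bds_harmonic_ext}, the only cosmetic difference being that you sum the correlation bound $\rho_{B,B'}\lesssim \log(n/|x_B-x_{B'}|)/\log n$ directly via an integral, whereas the paper splits pairs at the scale $s_n n$ with $s_n=r_n/\sqrt{\log n}$ and uses a uniform correlation bound on the far pairs. Your flagged ``bulk'' two-point estimate $G_{\Lambda_n}(x,y)=\log(n/|x-y|)+O(1)$ for $x,y\in D_n$ is also exactly what the paper invokes (as a straightforward consequence of \eqref{eq:cov_harmext} and \eqref{eq:Greensfct_bounds}) when stating \eqref{eq:cov_bds_harmonic_ext}, so this is not a gap.
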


In the case $s \neq 0$ (with $s^1 \geq \dots \geq s^N \geq 0$), the argument proceeds in two steps. First, we reduce to the case of an $L$-dimensional vector $(s^i)_{i=1}^L$ with all coordinates strictly positive, where $1 \leq L \leq N$ is chosen so that $s^L > 0$ and $s^{L+1} = 0$. In the second step, we use an external result from the proof of \cite[Theorem 1.3]{extremesGFF}, adapted to the vectorial setting\footnote{\cite[Theorem 1.3]{extremesGFF} denotes by $\Phi$ a scalar GFF and considers $\eta \in (0,1)$. Instead, let $\Phi, \eta$ both be vector valued with $\eta \in \R_+^L: 0< |\eta|<1$, and interpret relations such as $\Phi_B \geq c \eta$ for some $c\in \R_+$ coordinate-wise; $\eta^2$ as $|\eta|^2$. This modification does not affect (since coordinate processes are independent) the proof of the lower bound of the theorem in question up to trivial adjustments, and so, we will assume their results (properly adjusted) to be true for the vectorial case. \label{footnote:vect_thick_pts}} and our normalization of the field, which we now state. To distinguish from our set of parameters, we utilize upright Greek letters to refer to their parameters.
\begin{proposition}\label{prop:Dav}
    Let $\upalpha_1 \coloneqq \upalpha \in (1/2, 1)$, $\upkappa \in (0, 2(1-\upalpha))$, $\upeta \in \R_+^L$ with $0<|\upeta|<1$, $\K \geq 2$ such that $\K \upkappa \geq 4\upalpha$. Then, by setting $\Gamma_{\upalpha_1} \coloneqq \{B \in \Pi_{\upalpha_1}: B \subset \B\}$, we have that 
    \begin{align}
        \nonumber\P_{n}\bigg[\#\bigg\{B \in \Pi_{\upalpha/\K} \cap \B: h^{\primorial_{\upalpha/\K}}({x_B}) &\geq 2 \upeta  \upalpha \Big(1-\frac{1}{\K}\Big)^2\log n\bigg\} \leq n^{\upkappa + 2\upalpha (1-\frac{1}{\K})(1-|\upeta|^2)} \bigg]\\
        &\leq e^{-c\log^2 n} + \P_{n}\Big[\#\big\{B \in \Gamma_{\upalpha_1}: h^{\primorial_{\upalpha_1}}({x_B}) \geq 0\big\} \leq n^{\upkappa}\Big]\label{eq:Dav}
    \end{align} 
    for an appropriate constant $c = c(\upalpha, \upeta, \K) > 0$ as long as $\K$ is sufficiently large.
\end{proposition}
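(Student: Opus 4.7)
The plan is to adapt the single refinement step from the lower bound for the number of thick points in \cite[Theorem 1.3]{extremesGFF} (in the vectorial form described in footnote \ref{footnote:vect_thick_pts}) to transfer information from scale $\upalpha_1 = \upalpha$ down to scale $\upalpha/\K$. First, I would condition on $\F_{\upalpha_1}$. The event $\mathcal{E} \coloneqq \{\#\{B \in \Gamma_{\upalpha_1}: h^{\primorial_{\upalpha_1}}(x_B) \geq 0\} > n^\upkappa\}$ is $\F_{\upalpha_1}$-measurable, and the second term on the right-hand side of \eqref{eq:Dav} dominates $\P_n[\mathcal{E}^c]$; so it suffices to show that on $\mathcal{E}$ the conditional probability that there are fewer than $n^{\upkappa + 2\upalpha(1-1/\K)(1-|\upeta|^2)}$ thick sub-boxes is at most $e^{-c\log^2 n}$.

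Working inside each positive box $B_1 \in \Gamma_{\upalpha_1}$, I would use the domain Markov property (Lemma \ref{lemma:domainMP}) together with the Green's function estimates in Lemma \ref{lemma:planar_Greens_fct}, and (after a harmless modification of the two subgrids so that $\primorial_{\upalpha_1} \subset \primorial_{\upalpha/\K}$) decompose, for each sub-box $B \in \Pi_{\upalpha/\K}$ with $B \subset B_1$ sitting a macroscopic fraction of $n^{\upalpha_1}$ away from $\partial B_1$,
\begin{align*}
    h^{\primorial_{\upalpha/\K}}(x_B) \; = \; h^{\primorial_{\upalpha_1}}(x_B) \, + \, \Delta_B,
\end{align*}
where, conditionally on $\F_{\upalpha_1}$, $\Delta_B$ is a centered Gaussian vector with i.i.d. coordinates of variance $\upalpha(1-1/\K)\log n + O(1)$. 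The smoothness of the harmonic extension (in the spirit of \eqref{eq:harmdiff_bound} applied at scale $\upalpha_1$) implies $h^{\primorial_{\upalpha_1}}(x_B) - h^{\primorial_{\upalpha_1}}(x_{B_1}) = o(\log n)$ uniformly in $B \subset B_1$ with overwhelming probability, and a standard Gaussian lower-tail estimate then gives
\begin{align*}
    \P\bigl[h^{\primorial_{\upalpha/\K}}(x_B) \geq 2\upeta \upalpha(1-1/\K)^2 \log n \text{ coord.-wise} \,\bigm|\, \F_{\upalpha_1}\bigr] \; \geq \; n^{-2|\upeta|^2\upalpha(1-1/\K)^3 + o(1)}
\end{align*}
uniformly on $\mathcal{E}$ and in such $B \subset B_1$. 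Combined with the $n^{2\upalpha(1-1/\K) - o(1)}$ candidate sub-boxes per $B_1$ and the $\geq n^\upkappa$ positive $B_1$'s, this yields a conditional mean bounded below by $n^{\upkappa + 2\upalpha(1-1/\K)(1-|\upeta|^2(1-1/\K)^2)}$, which exceeds the target $n^{\upkappa + 2\upalpha(1-1/\K)(1-|\upeta|^2)}$.

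To convert this expectation bound into a high-probability bound, I would keep a single sub-box per $B_1$, producing a family of Bernoulli indicators that are independent across distinct $B_1$'s given $\F_{\upalpha_1}$, and apply a Chernoff bound to their sum. The hypothesis $\K\upkappa \geq 4\upalpha$ guarantees that the conditional mean is a power of $n$ large enough for the Chernoff exponent to dominate $\log^2 n$, closing the argument with the desired $e^{-c\log^2 n}$ slack.

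The main obstacle is keeping all estimates uniform in $B_1 \in \Gamma_{\upalpha_1}$ and in $\K$ as $\K$ grows (as required in the application). The slack $(1-1/\K)^2$ (rather than $(1-1/\K)$) in the target level in \eqref{eq:Dav} is precisely what allows absorption of the $o(\log n)$ harmonic fluctuations of $h^{\primorial_{\upalpha_1}}$ across each $B_1$, while the $(1-|\upeta|^2)$ factor (rather than the sharper $(1-|\upeta|^2(1-1/\K)^2)$) leaves room for the coordinate-wise Gaussian lower-tail computation to absorb the polylogarithmic losses; verifying these uniform absorption properties—and ensuring the modification of the two subgrids does not interact badly with the definition of $\Gamma_{\upalpha_1}$ and $\Pi_{\upalpha/\K} \cap \B$—is the delicate part of the estimate.
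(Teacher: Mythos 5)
Your reduction to the event $\{\#\{B\in\Gamma_{\upalpha_1}:h^{\primorial_{\upalpha_1}}(x_B)\ge 0\}> n^{\upkappa}\}$, the conditional decomposition $h^{\primorial_{\upalpha/\K}}(x_B)=h^{\primorial_{\upalpha_1}}(x_B)+\Delta_B$ with per-coordinate conditional variance $\upalpha(1-1/\K)\log n+O(1)$ for sub-boxes well inside a coarse box, and the resulting per-sub-box probability $n^{-2|\upeta|^2\upalpha(1-1/\K)^3+o(1)}$ are all fine, and they do give the right conditional \emph{expectation}, of order $n^{\upkappa+2\upalpha(1-1/\K)(1-|\upeta|^2(1-1/\K)^2)-o(1)}$. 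The genuine gap is the concentration step. Keeping a single sub-box per positive $B_1\in\Gamma_{\upalpha_1}$ produces a family of at most (number of positive coarse boxes) indicators; on the event you work on, only $n^{\upkappa}$ such boxes are guaranteed, each indicator succeeding with probability roughly $n^{-2|\upeta|^2\upalpha(1-1/\K)^3}$, so the sum you feed into Chernoff is at most $n^{\upkappa}$ (and typically much smaller), while the target count is $n^{\upkappa+2\upalpha(1-1/\K)(1-|\upeta|^2)}$, larger by a strictly positive power of $n$. To reach the target you must count polynomially many thick sub-boxes \emph{inside each} positive coarse box, and those indicators are not conditionally independent given $\F_{\upalpha_1}$: they share the intermediate-scale randomness of $\phi^{B_1}$ and are logarithmically correlated. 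A one-step second-moment (Paley--Zygmund) repair inside each $B_1$ also does not cover the whole parameter range: the effective thickness inside $B_1$ is $|\upeta|(1-1/\K)$, and once $|\upeta|(1-1/\K)>1/\sqrt{2}$ the pair correlations at separations comparable to the side of $B_1$ make the naive second moment polynomially larger than the squared first moment—the classical obstruction for thick points. This is exactly why the estimate is proved by a $\K$-step multi-scale iteration in which each step refines only by the small increment $\upalpha/\K$, the per-step threshold increase is small relative to the per-step variance, the per-box success probability is $1-e^{-c\log^2 n}$, and conditional independence across coarse boxes plus a binomial bound then closes each step; the hypothesis $\K\upkappa\ge 4\upalpha$ is needed there to keep a specific exponent positive at every step of the iteration, not for the role you assign it.

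For comparison, the paper does not reprove the estimate at all: Proposition \ref{prop:Dav} is obtained by quoting the intermediate inequalities (2.10) and (2.16) (and the display just below the latter) from the proof of \cite[Theorem 1.3]{extremesGFF}, adapted coordinate-wise to the vectorial field as in footnote \ref{footnote:vect_thick_pts}, together with the observation that replacing $\Pi_{\upalpha_1}$ by $\Gamma_{\upalpha_1}=\{B\in\Pi_{\upalpha_1}:B\subset\B\}$ only touches parts of that proof which are not used. If you want a self-contained argument, you should reproduce that iteration (or a truncated second-moment argument) rather than a single refinement from scale $\upalpha$ to $\upalpha/\K$; your nesting modification of the grids and the uniform control of $h^{\primorial_{\upalpha_1}}$ across each $B_1$ are comparatively minor issues.
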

\begin{proof}
    This result follows directly from equations (2.10), (2.16), and the inequality right below it in \cite{extremesGFF}. Note that the fact that $\K \upkappa \geq 4\upalpha$ is needed for the power in (2.16) to always be positive, and that the change of the definition $\Gamma_{\upalpha_1}$ instead of $\Gamma_{\upalpha_1}=\Pi_{\upalpha_1}$ only affects (2.7) and the concluding inequality where (2.7) is used—we do not use either of those observations—but not the rest of the proof.
\end{proof}
\begin{remark}
    Note that the proof of \cite[Theorem 1.3]{extremesGFF} also controls the second summand in \eqref{eq:Dav}, but only for a specific choice of $\upkappa > 0$ (sufficiently small). This choice, however, is not sufficient for proving Proposition \ref{prop:no_hole_harmonic}, which explains the need for Lemma \ref{lemma:pos_centers_estimate}. Furthermore, the statement of Proposition \ref{prop:Dav} extends easily to the case $\upeta \in [0,\infty)^N$ with $0<|\upeta|<1$. A direct application to vectors $s$ with some zero entries, however, does not allow all parameter constraints (on $\alpha, \delta, \kappa$ matched with $\upeta, \upalpha, \upkappa, \K$) to be satisfied simultaneously (cf. Lemma \ref{lemma:inequalities_non_empty}).
\end{remark}

\textbf{The case $s=0$.} 
Let us first prove Proposition \ref{prop:no_hole_harmonic} for $s=0$ under the assumption of Lemma \ref{lemma:pos_centers_estimate}.
\begin{proof}[Proof of Proposition \ref{prop:no_hole_harmonic} for $s=0$ assuming Lemma \ref{lemma:pos_centers_estimate}] 
    Observe that 
    \begin{align*}
        N^{\alpha}_{s-\delta, s+\delta} = N^{\alpha}_{-\delta, \delta}
        &\geq \#\bigg\{B \in \Pi_{\alpha} \cap \B: 0 < \frac{h_B}{\log n} < \delta\bigg\}\geq  N_+^\alpha - \sum_{j=1}^N  N^{\alpha,j}_{> \delta},
    \end{align*} 
    where
    \begin{align*}
        N^{\alpha,j}_{>\delta} := \# \big\{ B \in \Pi_\alpha \cap \B: (h_B)^{j} > \delta \log n \big\}
    \end{align*}
    depends solely on the $j$-th coordinate of $h_B$. Using that the coordinate processes are identically distributed, we obtain that
    \begin{align*}
        \P_{n}\big[ N^{\alpha}_{-\delta, \delta} \leq n^\kappa\big] \leq \P_{n}\big[ N^{\alpha}_+ \leq (N+1) n^\kappa\big] + N \P_{n}\big[  N^{\alpha,1}_{>\delta} \geq n^\kappa\big].
    \end{align*}
    Write $\kappa = 2(1-\alpha) - \xi$ for $0< \xi < 2(1-\alpha)$. By Lemma \ref{lemma:pos_centers_estimate}, the first term in the latter expression is vanishing in the limit for any fixed choice of $\xi>0$. Therefore, to conclude this case, it only remains to show that for $\xi$ sufficiently small, the second term also tends to zero as $n\to \infty$. To this end, observe that Markov's inequality, together with \eqref{eq:Greensfct_bounds} (recall that $\Var_n[(h_B)^1] = G_{\Lambda_n}(x_B,x_B) - G_B(x_B, x_B)$) and the canonical tail-bound for a standard normal variable gives us
    \begin{equation}
    \begin{aligned}\label{eq:upper_bound_+high_pts}
        \P_{n}\big[  N^{\alpha,1}_{>\delta} \geq n^\kappa\big] &\leq n^{-\kappa} \sum_{B \in \Pi_\alpha \cap \B} \P_n \big[(h_B)^1 > \delta \log n \big] \leq n^{\xi -\frac{\delta^2}{2(1-\alpha)}(1-o(1))}.
    \end{aligned}
    \end{equation}
    The latter quantity is arbitrarily small provided $0<\xi < \delta^2$ for $n$ large. Thus, we need to choose parameters so that
    \begin{align*}
        2(1-\alpha) \xi < \delta^2 < \frac{2\alpha (2(1-\alpha) - \xi)}{N}
    \end{align*}
    holds. Take, for example, $\alpha = 1/2$, $\delta = 1/({2\sqrt{N}})$, and $\xi = 1/(8N)$.
\end{proof}

We conclude the discussion of the case $s=0$ by proving the remaining result Lemma \ref{lemma:pos_centers_estimate}
\begin{proof}[Proof of Lemma \ref{lemma:pos_centers_estimate}]
    Note that 
    \begin{align*}
        \E_n[N^\alpha_+] = \sum_{B \in \Pi_\alpha \cap\B} \P_n[h_B > 0] = \frac{1}{2^N} |\Pi_\alpha \cap \B|.
    \end{align*}
    Then, by Markov's inequality, the asymptotic behavior of $r_n$, and the i.i.d. nature of the coordinate processes, we have for any $0<s_n < r_n$ (to be specified later)
    \begin{align*}
        \P_n\big[ N^\alpha_+ &\leq n^{2(1-\alpha) - \xi}\big] \leq \frac{\E_n[(N^\alpha_+)^2] - \E_n[N^\alpha_+]^2}{(\E_n[N^\alpha_+] - n^{2(1-\alpha) - \xi})^2} \\
        &= \big(1 + o\big(n^{-\xi/2}\big)\big) \bigg(\frac{2^{2N}}{|\Pi_\alpha \cap\B|^2} \sum_{B, B' \in \Pi_\alpha \cap\B} \P_n \big[(h_B)^1, (h_{B'})^1 > 0 \big]^N - 1\bigg)\\
        &\leq \big(1 + o(1)\big) \Bigg(\frac{2^Ns_n^2}{r_n^2} + \frac{1}{|\Pi_\alpha \cap\B|^2} \sum_{\substack{B, B' \in \Pi_\alpha \cap\B\\ |x_B - x_{B'}| > s_n n}} \Big(4^N\P_n \big[(h_B)^1, (h_{B'})^1 > 0 \big]^N - 1\Big)\Bigg).
    \end{align*}
    Next, recall that for a centered normal vector $(X,Y)$ with $\E[X^2] = \E[Y^2] = 1$ and $\E[XY] = \rho$, we have the following (cf. \cite[III.9 Problem 14]{feller1968} and \cite{arcsin_formula}):
    \begin{align}\label{eq:arcsin_formula}
        \P[X>0, Y>0] = \frac{1}{4} + \frac{1}{2\pi} \arcsin \rho.
    \end{align}
    Furthermore, as a straightforward consequence of \eqref{eq:cov_harmext} and \eqref{eq:Greensfct_bounds}, we obtain for all $B\neq B' \in \Pi_\alpha$:
    \begin{equation}
    \begin{aligned} \label{eq:cov_bds_harmonic_ext}
        \Var_n[(h_B)^1] &\in \log (n^{1-\alpha}) + (-c,c), \\
        \E_n[(h_{B})^1(h_{B'})^1] &\in \Big(\log (n^{1-\alpha}) - \log\frac{|x_B - x_{B'}|}{n^\alpha}\Big) + (-c, c),
    \end{aligned}
    \end{equation}
    for an appropriate constant $c>0$ (depending only on $\upiota$). Hence, for all $B,B' \in \Pi_\alpha$ with $|x_B-x_{B'}| \geq s_n n$:
    \begin{align*}
        \rho_{B,B'} \coloneqq \frac{\E_n[(h_{B})^1(h_{B'})^1]}{\sqrt{\Var_n[(h_B)^1]\Var_n[(h_{B'})^1]}} \leq \frac{\log (1/s_n) + c}{(1-\alpha)\log n - c}. 
    \end{align*}
    By choosing $0< s_n = r_n/\sqrt{\log n}$—so that $\frac{\log (1/s_n)}{\log n} = o(1)$—and applying \eqref{eq:arcsin_formula} to $X = (h_B)^1/\sqrt{\Var_n[(h_B)^1]}$ and $Y= (h_{B'})^1/\sqrt{\Var_n[(h_{B'})^1]}$, we obtain that 
    \begin{align*}
        \frac{1}{|\Pi_\alpha \cap\B|^2} \sum_{\substack{B, B' \in \Pi_\alpha \cap\B\\ |x_B - x_{B'}| > s_n n}} \Big(4^N\P_n \big[(h_B)^1, (h_{B'})^1 > 0 \big]^N - 1\Big)
        \leq \frac{16N}{\pi} \frac{\log (1/s_n)}{(1-\alpha)\log n}
    \end{align*}
    Here, we also used the inequalities $(1+x)^N - 1 \leq 2N|x|$ and $|\arcsin x| \leq 2|x|$ valid for $x=o(1)$.

    The proof is complete, as the above combined with the assumption on $r_n$ implies that
    \begin{align*}
        \P_n\big[ N^\alpha_+ \leq n^{2(1-\alpha) - \xi}\big] = \mathcal{O} \bigg(\frac{\log \log n + \log(1/r_n)}{\log n}\bigg) \xrightarrow{n\to \infty} 0.
    \end{align*}
\end{proof}

\textbf{Case $s\neq 0$.} Recall that $s \in \B^N(0, 2(1-\varepsilon))$ for some $\varepsilon > 0$ is such that $s^1\geq \dots \geq s^N \geq 0$, and $1 \leq L \leq N$ denotes the largest index such that $s^L > 0$.

We begin by stating an auxiliary result providing parameters that meet the necessary constraints for the proof of Proposition \ref{prop:no_hole_harmonic} in this case.
\begin{lemma}\label{lemma:inequalities_non_empty}
    For $s$ as above (in particular, such that $s^L > 0$), there are positive constants $\upalpha, \K, \delta, \xi, \zeta >0$ such that the following inequalities are verified simultaneously:
    \begin{align}\tag{C1} \label{eq:C1}
        &\delta^2 < \frac{(s^L)^2}{4} \wedge \frac{2 \upalpha}{N \K} \Big( 2\Big(1-\frac{\upalpha}{\K}\Big) - \xi\Big);\\
        &\xi < \frac{\sum_{i=1}^{L-1} (s^i - \delta)^2 + (s^L + \delta)^2}{2(1-\upalpha/\K)} \wedge 2\Big(1-\frac{\upalpha}{\K}\Big); \tag{C2} \label{eq:C2} \\
        &\xi > \zeta + \sum_{i=1}^L \frac{(s^i - \delta)^2}{2\upalpha (1 - 1/\K)^3}; \tag{C3} \label{eq:C3}\\  
        &\K \geq \frac{2\upalpha}{1-\upalpha -\zeta/2}; \tag{C4} \label{eq:C4}\\
        &\sum_{i=1}^L (s^i - \delta)^2 < 4\upalpha^2 \Big(1- \frac{1}{\K}\Big)^4. \tag{C5}\label{eq:C5}
    \end{align}
\end{lemma}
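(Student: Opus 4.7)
The plan is to verify this purely algebraic lemma by exhibiting a single point $(\upalpha, \K, \delta, \xi, \zeta) \in (0,\infty)^5$ satisfying C1--C5 simultaneously, picking the parameters in the order $\upalpha, \K, \zeta, \delta, \xi$, and at each stage keeping the feasible region for the next parameter open.

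The key algebraic observation is the following. Writing $P := \sum_{i=1}^{L-1}(s^i - \delta)^2$ (an empty sum if $L=1$) and $Q := (s^L-\delta)^2$, the C3 numerator equals $P+Q$ while the C2 numerator equals $P + (s^L+\delta)^2 = P + Q + 4\delta s^L$. So the C2 numerator exceeds the C3 numerator by exactly $4\delta s^L > 0$, and this is the crucial source of slack for the sandwich on $\xi$ (it relies critically on the assumption $s^L>0$). This slack has to overcome the deficit between the denominators: setting $\upalpha = 1-\eta$, a direct expansion yields
\begin{align*}
(1-\upalpha/\K) - \upalpha(1-1/\K)^3 \;=\; \eta + 2/\K + \mathcal{O}(\eta/\K + 1/\K^2),
\end{align*}
so the C3 denominator $2\upalpha(1-1/\K)^3$ is strictly smaller than the C2 denominator $2(1-\upalpha/\K)$ whenever $\upalpha < 1$.

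My concrete parameter schedule is as follows. I fix $\upalpha = 1-\eta$ and $\zeta = \eta$ with $\eta \in (0, \varepsilon/2)$ to be pinned down. Then C4 becomes $\K \geq 4\upalpha/\eta$. Since $|s| < 2(1-\varepsilon)$ gives $(1-\varepsilon)/(1-\eta) < 1$ strictly, the inequality $(1-1/\K)^4 > (1-\varepsilon)^2/(1-\eta)^2$ holds for all $\K$ sufficiently large, yielding C5 with slack that survives for small $\delta$. Clearing denominators in the C2--C3 compatibility for $\xi$ yields
\begin{align*}
4\delta s^L \upalpha(1-1/\K)^3 \;>\; (P+Q)\bigl[(1-\upalpha/\K) - \upalpha(1-1/\K)^3\bigr] + \mathcal{O}(\zeta),
\end{align*}
which, by the expansion above, translates into a lower bound $\delta > \delta_-$ of order $(|s|^2/s^L)(\eta + 1/\K)$. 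On the other hand, once $\xi$ is bounded a definite amount below $2(1-\upalpha/\K)$ (possible since $|s|^2<4$), C1 gives $\delta < s^L/2$ and $\delta^2 < \frac{2\upalpha}{N\K}\bigl(2(1-\upalpha/\K) - \xi\bigr)$, the latter capping $\delta$ at order $1/\sqrt{\K}$. Taking $\eta = 1/\sqrt{\K}$ balances the two sides: both $\delta_-$ and $\delta_+$ are then of order $1/\sqrt{\K}$ with multiplicative constants depending only on $|s|, s^L$, and $N$, so the interval $(\delta_-, \delta_+)$ is non-empty for all sufficiently large $\K$. I then pick any $\delta$ in this open window and any $\xi$ in the corresponding non-empty sandwich from C2 and C3.

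The hardest step will be this last compatibility check: the window $(\delta_-, \delta_+)$ for $\delta$ itself shrinks as $\K \to \infty$, and one has to verify that after the choice $\eta \sim 1/\sqrt{\K}$ (which balances the denominator defect $\eta + 2/\K$ from C2 vs.\ C3 against the cap $\delta = \mathcal{O}(1/\sqrt{\K})$ from C1) the implicit numerical constants indeed line up in the correct order. Once this is verified, the five conditions, being strict open inequalities, are simultaneously satisfied on an entire open neighbourhood of the constructed point, which concludes the argument.
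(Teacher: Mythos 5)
Your schedule and the mechanism you isolate (the $4\delta s^L$ surplus of the (C2) numerator over the (C3) numerator, to be weighed against the denominator defect $(1-\upalpha/\K)-\upalpha(1-1/\K)^3\approx(1-\upalpha)+2/\K$) are exactly the mechanism of the paper's proof, but the step you defer — ``the implicit numerical constants indeed line up'' — is precisely where your balancing fails, and no choice of large $\K$ repairs it. With $\zeta=\eta=\K^{-1/2}$, clearing denominators in the (C2)/(C3) compatibility gives $4\delta s^L > 2\zeta\big(1-\tfrac{\upalpha}{\K}\big)+\big(\sum_{i\le L}(s^i-\delta)^2\big)\big(\tfrac{1-\upalpha/\K}{\upalpha(1-1/\K)^3}-1\big)\approx\big(2+|s|^2\big)\K^{-1/2}$, i.e. $\delta_-\approx\frac{2+|s|^2}{4s^L}\,\K^{-1/2}$; on the other side, since (C3) forces $\xi$ above roughly $\tfrac12|s|^2$, the cap from (C1) is $\delta_+\approx\sqrt{(4-|s|^2)/(N\K)}$. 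Both endpoints scale exactly like $\K^{-1/2}$, so non-emptiness of $(\delta_-,\delta_+)$ for large $\K$ is equivalent to the constant inequality $\frac{2+|s|^2}{4s^L}<\sqrt{(4-|s|^2)/N}$, which fails for admissible data: for $N=L=1$ and $s^1=1/10$ (allowed whenever $2(1-\varepsilon)\ge 1/10$) one gets $\delta_-\approx 5\,\K^{-1/2}>2\,\K^{-1/2}\approx\delta_+$ for every $\K$, and the failure only worsens for small $s^L$ or large $N$. So the proposal has a genuine gap: with the stated scaling the deferred compatibility is false in general, not merely unchecked.

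The cure is to take $1-\upalpha$ and $\zeta$ of strictly smaller order than $\K^{-1/2}$, and with a prefactor proportional to $s^L$ so that the $1/s^L$ in $\delta_-$ cancels; this is what the paper does. It sets $\upalpha=1-\frac{s^L}{2}\K^{-(1+r)/2}$ and $\zeta\le\frac{s^L}{2}\K^{-(1+r)/2}$ with $r\in(0,1/2)$ (still compatible with (C4) because the exponent is $<1$), chooses $\xi$ to saturate (C3) up to $2\zeta$, and takes $\delta$ as an explicit root of the quadratic coming from (C1), sandwiched between $|s|(1-o(1))\K^{-(1+r)/2}$ and $8/\sqrt{N\K}$. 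The (C2)/(C3) comparison then reduces to $\sum_{i\le L}(s^i-\delta)^2\big(\tfrac{1-\upalpha/\K}{\upalpha(1-1/\K)^3}-1\big)< s^L|s|^2\K^{-(1+r)/2}<4s^L\delta$, where the factor $s^L$ in $1-\upalpha$ does the cancelling and only $|s|<2$ (together with $|s|\le2(1-\varepsilon)$, used for the auxiliary (C5)-type condition) is needed. Concretely, if you replace your choice by, say, $\eta=\zeta=s^L\K^{-3/4}$, then $\delta_-=O(\K^{-3/4})=o(\delta_+)$ and your argument closes along the lines you sketched; with $\eta=\zeta=\K^{-1/2}$ it does not.
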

Assuming this lemma, we proceed to prove Proposition \ref{prop:no_hole_harmonic}, postponing its proof to the end of this section.

\begin{proof}[Proof of Proposition \ref{prop:no_hole_harmonic} for $s\neq 0$ assuming Lemma \ref{lemma:inequalities_non_empty}] 
    Let $s$ (and $L$) be as specified above, and let $\upalpha, \K, \delta, \xi, \zeta > 0$ be as in Lemma \ref{lemma:inequalities_non_empty}. Define $\alpha \coloneqq \upalpha/\K \in (0,1)$ and $\kappa\coloneqq 2(1-\alpha) - \xi$. In particular, by \eqref{eq:C2} we have $\kappa \in (0,2(1-\alpha))$, and by \eqref{eq:C1} we have $0 < \delta^2 < 2\alpha \kappa/N$, as desired. We start by reducing to the case when all the coordinates in question are away from $0$. To this end, define  
    \begin{align*}
        \Sc_L(\delta) \coloneqq \bigg\{B \in \Pi_{\alpha} \cap \B: s^i - \delta < \frac{(h_B)^i}{\log n} < s^i+\delta, \, \forall i \leq L \bigg\},
    \end{align*}
    and observe that
    \begin{align*}
        N^{\alpha}_{s-\delta, s+\delta}
        &= \#\bigg\{B \in\Sc_L(\delta): \frac{(h_B)^i}{\log n} \in (- \delta, \delta), \; \forall L+1 \leq i \leq N\bigg\}\\
        &\geq \# \Sc_L(\delta) - \sum_{j=L+1}^N \#\bigg\{B \in \Sc_L(\delta): \frac{|(h_B)^j|}{\log n} \geq \delta\bigg\}.
    \end{align*}
    Fully analogously to \eqref{eq:upper_bound_+high_pts} (by Markov's inequality (conditionally on $S_L(\delta)$) and \eqref{eq:Greensfct_bounds}), we obtain 
    \begin{align*}
        \P_{n}\big[ N^{\alpha}_{s-\delta, s+\delta} \leq n^\kappa\big]  &- \P_{n}\big[ \# \Sc_L(\delta) \leq 2n^\kappa\big] \\
        &\leq \sum_{j=L+1}^N \P_{n}\bigg[\#\bigg\{B \in \Sc_L(\delta): \frac{|(h_B)^j|}{\log n} \geq \delta\bigg\} \geq \frac{\# \Sc_L(\delta)}{2N} \bigg]\\
        &\leq 4N(N-L) n^{-\frac{\delta^2}{2(1-\alpha)}(1-o(1))}.
    \end{align*}
    The latter expression is arbitrarily small, hence, it only remains to bound $\P_{n}[ \# \Sc_L(\delta) \leq 2n^\kappa]$ appropriately. 

    For notational convenience, in the remainder of the proof we restrict to the $L$-vectorial field and write $s$ for $(s^i)_{i=1}^L$. For $v \in \R^L$, we define
    \begin{align*}
        N^\alpha_{> v} \coloneqq \#\bigg\{ B \in \Pi_\alpha \cap \B: \frac{h_B}{\log n} > v\bigg\}.
    \end{align*}
    Recall that $\kappa = 2(1-\alpha) - \xi$, and observe that
    \begin{align*}
        \# \Sc_L(\delta) \geq N^\alpha_{> s - \delta} - \sum_{j=1}^L N^\alpha_{>s - \delta + 2\delta \ind_{\{j\}}}.
    \end{align*}
    Therefore, by reiterating the above procedure, i.e., applying Markov's inequality and \eqref{eq:Greensfct_bounds}, we get
    \begin{align*}
        \P_{n}\big[ \# \Sc_L(\delta) \leq 2n^\kappa\big] - \P_{n}\big[ N^\alpha_{> s - \delta} \leq (L+2) n^\kappa\big] 
        &\leq L \max_{j=1}^L \P_{n}\big[N^\alpha_{>s - \delta + 2\delta \ind_{\{j\}}} > n^\kappa\big] \\
        &\leq \max_{j=1}^L n^{\xi - \frac{1}{2(1-\alpha)} \sum_{i=1}^L(s^i - \delta + 2\delta \ind_{\{i=j\}})^2 + o(1)}\\
        &\leq n^{\xi - \frac{1}{2(1-\alpha)} \sum_{i=1}^L(s^i - \delta + 2\delta \ind_{\{i=L\}})^2 + o(1)}.
    \end{align*}
    The latter vanishes in the limit as by \eqref{eq:C2},
    \begin{align*} 
        0 <\xi < \frac{\sum_{i=1}^{L-1} (s^i - \delta)^2 + (s^L + \delta)^2}{2(1-\alpha)} \wedge 2(1-\alpha).
    \end{align*}

    To bound $\P_{n}\big[ N^\alpha_{> s - \delta} \leq (L+2) n^\kappa\big]$, we want to apply Proposition \ref{prop:Dav}. To this end, let
    \begin{align*}
      \upkappa \coloneqq 2(1-\upalpha) - \zeta &&\text{ and } && \upeta \coloneqq \frac{s-\delta}{2\upalpha (1-1/\K)^2} \in \R_+^L.
    \end{align*}
    Here we used that $\delta < s^L/2$ by \eqref{eq:C1}. Note that by Lemma \ref{lemma:pos_centers_estimate}, the last term in \eqref{eq:Dav} of Proposition \ref{prop:Dav} converges to zero as $n\to \infty$. Therefore, we indeed obtain the desired convergence
    \begin{align*}
        \P_{n}\big[ N^\alpha_{> s - \delta} \leq (L+2) n^\kappa\big] \xrightarrow{n\to\infty} 0
    \end{align*}
    provided that 
    \begin{align*} 
        \kappa < \upkappa + 2\upalpha \Big(1-\frac{1}{\K}\Big)(1-|\upeta|^2),  &&\K \upkappa \geq 4\upalpha, &&\sum_{i=1}^L (s^i - \delta)^2 < 4\upalpha^2 \Big(1- \frac{1}{\K}\Big)^4.
    \end{align*}
    These inequalities are in turn yielded by \eqref{eq:C3}, \eqref{eq:C4}, and \eqref{eq:C5}, respectively.
\end{proof}

To complete the proof of Theorem \ref{thm:2Dno_hole}, it only remains to establish Lemma \ref{lemma:inequalities_non_empty}.
\begin{proof}[Proof of Lemma \ref{lemma:inequalities_non_empty}]
 To ensure \eqref{eq:C3}, we choose 
    \begin{align*}
        \xi \coloneqq 2\zeta + \sum_{i=1}^L \frac{(s^i-\delta)^2}{2\upalpha (1-1/\K)^3}.
    \end{align*}
    By plugging this expression into \eqref{eq:C1} and setting $q \coloneqq (\K(1-1/\K)^3)^{-1}$, we see that the assumption on $\delta \in (0, s^L/2)$ is equivalent to
    \begin{align*}
        \delta^2 \Big(1+ \frac{qL}{N}\Big) - \delta \sum_{i=1}^L \frac{2q s^i}{N} - \frac{1}{N\K} \bigg(4\upalpha \Big(1-\frac{\upalpha}{\K} - \zeta\Big) - q \K \sum_{i=1}^L (s^i)^2 \bigg) < 0.
    \end{align*}
    Thus, if we additionally can choose $\upalpha, \K$ such that 
    \begin{align} \tag{C5'}
        \label{eq:C5'}
        \sum_{i=1}^L (s^i)^2 = |s|^2 \leq 4\upalpha \Big(1-\frac{\upalpha}{\K} - \zeta\Big) \Big/ \bigg(\frac{N}{\K^r} + \frac{1}{(1-{1}/{\K})^{3}}\bigg)
    \end{align}
    for some $r \in (0,1/2)$, then, as long as $\K>0$ is sufficiently large (depending on $N, s^L$),
    \begin{align*}
        \delta \coloneqq \frac{|s|(1- \K^{-10})}{1+ qL/N} \Bigg( \frac{\sum_{i=1}^L qs^i}{N|s|} + \sqrt{\bigg(\frac{\sum_{i=1}^L qs^i}{N|s|}\bigg)^2 + \frac{1+qL/N}{N \K} \bigg(\frac{4\upalpha (1-\upalpha/\K - \zeta)}{|s|^2} - q \K \bigg)} \Bigg) 
    \end{align*}
    indeed fulfills \eqref{eq:C1}, and, moreover, (since $|s|<2$)
    \begin{align*}
        |s| \frac{1 - L/(N\K)}{\K^{(1+r)/2}} \leq \delta \leq |s|\bigg(q + \sqrt{q^2 + \frac{4(1+q)}{\K N|s|^2}}\bigg) \leq \frac{2|s|}{\K} + \frac{4}{\sqrt{N \K}} \leq \frac{8}{\sqrt{N\K}} < \frac{s^L}{2}.
    \end{align*} 
    Let 
    \begin{align}\label{eq:upalpha_choice}
        \upalpha \coloneqq 1 - \frac{1}{\K^{(1+r)/2}} \frac{s^L}{2},
    \end{align}
    then \eqref{eq:C4} is satisfied for all $\K$ large enough (depending on $s^L$) and $\zeta \leq \K^{-(1+r)/2} s^L/2$. Note that with this choice of $\alpha$—since $r < 1/2 < (1+r)/2$, $\delta \leq s^L/2$, and hence
    \begin{align}\label{eq:s-delta}
        \sum_{i=1}^L (s^i - \delta)^2 \leq |s|^2,
    \end{align}
    for all $\K$ sufficiently large (depending on $s^L$), \eqref{eq:C5'} implies \eqref{eq:C5}, and is in turn yielded by the inequality $|s| \leq 2(1- N \K^{-r})$. Recall, however, that $|s| \leq 2(1-\varepsilon)$, so by making $K$ even larger if needed depending on $\varepsilon$, we indeed obtain $|s| \leq 2(1- N \K^{-r})$. Hence, to complete the proof, it suffices to show that our choice of parameters (in particular, we may work under \eqref{eq:C5'}) fulfills 
    \begin{align*}
        \sum_{i=1}^L \frac{(s^i-\delta)^2}{2\upalpha (1-1/\K)^3} 
        < \frac{\sum_{i=1}^{L-1} (s^i - \delta)^2 + (s^L + \delta)^2}{2(1-\upalpha/\K)} \wedge 2\Big(1-\frac{\upalpha}{\K}\Big)
    \end{align*}
    (since then \eqref{eq:C2} follows if we set $\zeta$ to be the minimum between $\upalpha/\K$ and the $1/4$ of the difference of the expressions on the right- and left-hand sides, respectively). First observe that for $\K$ sufficiently large, similarly to the above, \eqref{eq:C5'} implies that $|s|^2 \leq 4\alpha (1-1/\K)^4$, and thus by \eqref{eq:s-delta}, it is left to establish that
    \begin{align*}
        \sum_{i=1}^L \frac{(s^i-\delta)^2}{2\upalpha (1-1/\K)^3} 
        < \frac{\sum_{i=1}^{L-1} (s^i - \delta)^2 + (s^L + \delta)^2}{2(1-\upalpha/\K)}.
    \end{align*}
    The latter inequality indeed holds true: by first applying \eqref{eq:upalpha_choice} and \eqref{eq:s-delta}, then \eqref{eq:C5'}, and finally the above lower bound for $\delta$, we obtain that
    \begin{align*}
        \sum_{i=1}^L (s^i-\delta)^2 \Big(\frac{1-\upalpha/\K}{\upalpha (1-1/\K)^3} - 1\Big) < \frac{s^L |s|^2}{\K^{(1+r)/2}} 
        \leq \frac{2 s^L |s| (1 - (N/4) \K^{-r})}{\K^{(1+r)/2}}
        < 4 s^L \delta  
    \end{align*}
    for all $\K$ sufficiently large (depending on $s^L, N$) as desired.
\end{proof}


\section[Entropic repulsion of the norm]{Entropic repulsion of the norm of the vector-valued GFF conditioned to avoid a ball}
\label{sec:repulsion}

The goal of this section is to prove the entropic repulsion phenomenon (cf. Theorem \ref{thm:repulsion}) of an $N$-vectorial Dirichlet Gaussian free field $\phi$ conditioned to avoid a ball. As a warm-up, we discuss the case of a half-line for $N=1$ and its direct consequences for the case of a bounded ball (interval if $N=1$).

\subsection{Warm-up: Avoiding a half-line and its consequences for the ball case}

Let $b\in \R$ and $\phi$ be a scalar Dirichlet GFF on $\Lambda_n$. We now discuss its law conditioned on $\Omega^{b,+}_{D_n} \coloneqq \{\phi(x) \geq b, \; \forall x \in D_n\}$. 

\begin{proposition}
\label{prop:halfline_norm}
    Let $\phi$ be a scalar Dirichlet GFF on $\Lambda_n$. Then, for any $\beta > 0$, 
    \begin{align}
        \lim_{n \rightarrow \infty} &\frac{\log \P_n[ \Omega_{D_n}^{b,+}]}{\log^2 n} = -\frac{2}{\pi} \:\cpc_\Lambda(D); \label{eq:1sD_prob.cond.event} \\
        \lim_{n \rightarrow \infty}\sup_{x \in D_n} &\P_n \big[|\phi(x)- 2 \log n \big| \geq \beta\log n ~|~ \Omega^{b,+}_{D_n}\big] = 0. \label{eq:1sD_norm}
    \end{align}
    Here, $\cpc_\Lambda(D)$ is the relative capacity of $D$ with respect to $\Lambda$ defined by $$\cpc_\Lambda(D) = \frac{1}{2} \inf\{\norm{\nabla f}^2_{L^2(\Lambda)}: f \in H^1_0 (\Lambda), \,f \geq 1 \text{ on } D\}.$$ 
\end{proposition}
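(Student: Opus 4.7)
The strategy closely follows the classical 2D hard-wall entropic repulsion argument of Bolthausen-Deuschel-Giacomin \cite{entropRep2D}, transposed to the normalisation $\mathfrak{g}=1/(2\pi)$ used here and to the general subdomain $D$ and level $b$. Since $b\in\R$ is fixed while $\phi$ has values of order $\log n$, replacing $b$ by $0$ changes the asymptotics in \eqref{eq:1sD_prob.cond.event} only by $o(\log^2 n)$ and those in \eqref{eq:1sD_norm} only by $o(\log n)$, so I assume $b=0$ throughout. The single auxiliary object driving the proof is the discrete minimiser $h_n:\Lambda_n\to\R$ from the definition of $\cpc_\Lambda(D)$: $h_n=0$ on $\partial\Lambda_n$, $h_n\geq 1$ on $D_n$, and $h_n$ minimises $\sum_{x\sim y}|h(x)-h(y)|^2$ under those constraints. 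By standard $\Gamma$-convergence it approximates the continuum optimiser $h_D$ and satisfies $\sum_{x\sim y}|h_n(x)-h_n(y)|^2 \to 2\|\nabla h_D\|^2_{L^2(\Lambda)} = 4\cpc_\Lambda(D)$.

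\textbf{Lower bound on \eqref{eq:1sD_prob.cond.event} and upper half of \eqref{eq:1sD_norm}.} Fix $\eta>0$ small and set $f_n := (2+\eta)h_n\log n$. Cameron-Martin says the law $\Q_n$ of $\phi+f_n$ under $\P_n$ satisfies $\tfrac{d\Q_n}{d\P_n}(\phi)=\exp(\langle\phi,f_n\rangle_H-\tfrac12\|f_n\|_H^2)$. Since on $D_n$ the condition $\phi+f_n\geq 0$ is implied by $\min_{D_n}\phi\geq -(2+\eta)\log n$, and since the latter has probability $\to 1$ by the max-of-GFF asymptotics (Remark \ref{rem:max_2D_vectorial} applied to $-\phi$), $\Q_n[\Omega_{D_n}^{0,+}]\to 1$. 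The standard entropy inequality (Jensen applied to $\log(d\Q_n/d\P_n)$) then gives
\begin{align*}
-\log\P_n[\Omega_{D_n}^{0,+}] \leq \tfrac12\|f_n\|_H^2 + o(\log^2 n),
\end{align*}
while the explicit computation $\|f_n\|_H^2 = \tfrac{\mathfrak{g}}{2}\sum_{x\sim y}|f_n(x)-f_n(y)|^2 = \tfrac{(2+\eta)^2\cpc_\Lambda(D)}{\pi}\log^2 n\,(1+o(1))$ gives, after sending $\eta\downarrow 0$, the lower bound in \eqref{eq:1sD_prob.cond.event}. The same shift, together with the FKG inequality (Proposition \ref{prop:FKG_condGFF}), produces the upper half of \eqref{eq:1sD_norm}: the conditioned law is stochastically dominated by the law of $\phi+f_n$, and for $x\in D_n$ the field $\phi(x)+f_n(x)$ has Gaussian fluctuations of order $\sqrt{\log n}=o(\log n)$ around $(2+\eta)\log n\cdot h_n(x)\approx (2+\eta)\log n$ (since the continuum minimiser equals $1$ on $D$).

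\textbf{Matching upper bound and lower half of \eqref{eq:1sD_norm}.} These I would prove jointly by a mesoscopic analysis on the grid $\Pi_\alpha$ with $\alpha\in(0,1)$ close to $1$. The domain Markov property (Lemma \ref{lemma:domainMP}) yields the conditional factorisation
\begin{align*}
\P_n[\Omega_{D_n}^{0,+}\mid\F_\alpha] = \prod_{B\in\Pi_\alpha\cap D_n} \P_n\big[\phi^B(x)\geq -h^{\primorial_\alpha}(x)\text{ for all }x\in B \,\big|\,\F_\alpha\big],
\end{align*}
and since $h^{\primorial_\alpha}$ varies by at most $o(\log n)$ inside $B$ (by \eqref{eq:harmdiff_bound}), each factor is effectively the one-box hard-wall probability at level $-h_B$. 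A one-box Gaussian-tail estimate---using that $\min_B\phi^B=-(2\alpha+o(1))\log n$ with high probability (Remark \ref{rem:max_2D_vectorial} applied to a box of side $n^\alpha$)---shows each factor is super-polynomially small in $n$ unless $h_B\geq(2\alpha+o(1))\log n$. Taking $\alpha\uparrow 1$, this forces $h_B\geq(2-\beta)\log n$ on all but $o(|\Pi_\alpha\cap D_n|)$ boxes; combined with the concentration of $\phi^B_{x_B}$ around $0$ on scale $\sqrt{\log n}$, this yields the lower half of \eqref{eq:1sD_norm}. Integrating the forcing $h_B\gtrsim 2\log n$ on $D_n$ against the Gaussian density of $(h_B)_B$ (whose covariance is controlled by \eqref{eq:cov_harmext} and Lemma \ref{lemma:harmonic_cov_EVs}) reduces the upper bound on $\P_n[\Omega_{D_n}^{0,+}]$ to a discrete variational problem whose minimum converges to $\tfrac{2}{\pi}\cpc_\Lambda(D)$, matching the Cameron-Martin lower bound as $\alpha\uparrow 1$.

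\textbf{Main obstacle.} The Cameron-Martin lower bound is essentially free; the crux is the matching upper bound. The delicate forcing step---that on the conditioning event the mesoscopic harmonic extension $h^{\primorial_\alpha}$ must sit near $2\log n$ at almost every centre, with total Dirichlet cost at least $\tfrac{2}{\pi}\cpc_\Lambda(D)\log^2 n(1-o(1))$---requires combining a one-box hard-wall tail bound with a quantitative $\Gamma$-convergence of the discrete capacity to $\cpc_\Lambda(D)$. All the structural ingredients (the domain Markov decomposition, the Green's function estimates \eqref{eq:Greensfct_bounds}, and the covariance control of the harmonic extension from Section \ref{sec:setup}) are already in hand, so the argument should follow \cite{entropRep2D} with only cosmetic changes; the constant shift $b$ is absorbed into the $o(\log^2 n)$ and $o(\log n)$ error terms.
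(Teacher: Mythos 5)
Your Cameron--Martin/entropy computation for the lower bound in \eqref{eq:1sD_prob.cond.event} is correct (the constants work out: $\tfrac12\|f_n\|_H^2\to\tfrac{2}{\pi}\cpc_\Lambda(D)\log^2 n$ after $\eta\downarrow 0$), and your treatment of $b\neq 0$ as an $o(\log^2 n)$, $o(\log n)$ perturbation matches the paper's remark. But there is a genuine gap in the step you present as easy: the claim that, by FKG, the law $\P_n[\,\cdot\mid\Omega^{0,+}_{D_n}]$ is stochastically dominated by the law of $\phi+f_n$. FKG pushes the conditioned field \emph{up} relative to the unconditioned one (and a higher wall dominates a lower wall); it gives no domination of the hard-wall measure from above by a deterministic shift of the free field, and such a domination is in fact false: already for a single Gaussian variable, $\phi(x)$ conditioned on $\Omega^{0,+}_{D_n}$ is a.s.\ nonnegative at $x\in D_n$, while $\phi(x)+f_n(x)$ is negative with positive probability, so stochastic domination fails at level $0$. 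Nor can one substitute the crude bound $\P_n[\phi(x)\geq(2+\beta)\log n\mid\Omega^{0,+}_{D_n}]\leq \P_n[\phi(x)\geq(2+\beta)\log n]/\P_n[\Omega^{0,+}_{D_n}]$, since the denominator is $e^{-\Theta(\log^2 n)}$ while the numerator is only polynomially small. Hence the upper tail in \eqref{eq:1sD_norm} --- precisely the ingredient this paper later relies on through \eqref{eq:norm_upperbound} --- is not established by your argument; in \cite{entropRep2D} it requires a refined upper bound on $\P_n[\Omega^{0,+}_{D_n}\cap\{\phi(x)\geq(2+\beta)\log n\}]$ showing the joint event costs strictly more than the wall alone, not a domination statement.

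Beyond that, your second block (matching upper bound for \eqref{eq:1sD_prob.cond.event} and the lower half of \eqref{eq:1sD_norm}) is a plan rather than a proof: the assertion that each conditional one-box factor is super-polynomially small unless $h_B\geq(2\alpha+o(1))\log n$ is itself a quantitative entropic-repulsion estimate (it is the engine behind Proposition \ref{prop:few_low_boxes}, whose proof needs Theorem \ref{thm:2Dno_hole} or the corresponding input from \cite{extremesGFF}/\cite{entropRep2D}), and the passage from ``$h_B\gtrsim 2\log n$ on most boxes'' to the sharp constant $\tfrac{2}{\pi}\cpc_\Lambda(D)$ via a discrete-to-continuum variational argument is exactly the hard core of \cite[Theorem 3]{entropRep2D}, which you acknowledge but do not carry out (for that direction, compare the strategy of Proposition \ref{prop:N=1upperbound_condition} in this paper, which tests $h^{\primorial_\alpha}$ against smooth $f$ and uses the dual, supremum form of the capacity rather than a $\Gamma$-convergence of discrete minimisers). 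For comparison, the paper's own proof of this proposition is simply a citation of \cite[Theorems 3 and 4]{entropRep2D}, adjusted for the normalization $\mathfrak{g}=1/(2\pi)$ (and a typo in the reference), with the observation that arbitrary $b$ requires only trivial changes; if you want a self-contained argument you must genuinely reprove both BDG directions, and in particular replace your domination step by a correct mechanism.
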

\begin{proof}
     For $b=0$, the two statements are Theorem 3 and Theorem 4 in \cite{entropRep2D}\footnote{Recall that our GFF is normalized such that $\E_n[\phi(0)^2] \sim \log n$ as $n\to \infty$, while \cite{entropRep2D} uses the random walk normalization of the Laplacian, which explains the difference in \eqref{eq:1sD_norm}: in their setting, $\E_n[\tilde\phi(0)^2] \sim g\log n$ with $g = \frac{2}{\pi}$. As for \eqref{eq:1sD_prob.cond.event}, there is a typo in the reference—under the random walk normalization, their relative capacity $\tilde{\cpc}_\Lambda(D)$ should be given by $\frac{1}{2d} \cpc_\Lambda(D)$, rather than simply $\cpc_\Lambda(D)$, which makes $4g\tilde{\cpc}_\Lambda(D) = 2/\pi$. This is consistent with our result, as the probability of the hard-wall event should not depend on the global normalization of the field. Also note that in other literature, the relative capacity is sometimes defined as $2\cpc_\Lambda(D)$. \label{footnote:typo_cond_event}}, respectively.
     Their proofs carry over to the case of arbitrary $b \in \R$ with only trivial adjustments.
\end{proof}

The above theorem has some rather direct consequences for an $N$-vectorial ($N \in \N$) Dirichlet GFF $\phi$ on $\Lambda_n$ conditioned to avoid a ball $I$: $I$ is a proper interval $(a,b) \subseteq \R$ when $N=1$, or a centered ball of radius $R>0$—$\B^N(0,R)\subset \R^N$—when $N\geq 2$. We also define the event
\begin{align} \label{eq:Omega_def}
    \Omega^{I}_{D_n} \coloneqq \{\phi(x) \notin I, \; \forall x \in D_n\}.
\end{align}
\begin{corollary}
\label{cor:halfline_implications}
    Let $\phi$ be an $N$-vectorial Dirichlet Gaussian free field on $\Lambda_n$. Then, for any $\beta > 0$,
    \begin{align}
        \liminf_{n \rightarrow \infty} &\frac{\log \P_n[ \Omega_{D_n}^{I}]}{\log^2 n} \geq -\frac{2}{\pi} \:\cpc_\Lambda(D);  \label{eq:lowerbound_condevent}\\
        \lim_{n \rightarrow \infty}\sup_{x \in D_n} &\P_n \big[|\phi(x)| \geq (2+\beta) \log n ~\big|~ \Omega^{I}_{D_n} \big] = 0; \label{eq:norm_upperbound}.
    \end{align}
\end{corollary}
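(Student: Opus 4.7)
The natural plan is to deduce both statements from Proposition~\ref{prop:halfline_norm} applied to the scalar coordinate process~$\phi^1$, which is itself a scalar Dirichlet GFF on $\Lambda_n$. The key observation is that a half-line bound on $\phi^1$ alone already forces the full vectorial field to avoid $I$. Setting $R_+ := R$ in the ball case $I = \B^N(0,R)$ (so that $|\phi(x)|^2 \geq (\phi^1(x))^2 \geq R^2$) and $R_+ := b$ in the scalar interval case $I = (a,b)$ (so that $\phi(x) = \phi^1(x) \geq b$ lies outside $(a,b)$), one obtains the deterministic inclusion
\begin{align*}
\{\phi^1(x) \geq R_+,\ \forall x \in D_n\} \,\subseteq\, \Omega^I_{D_n}.
\end{align*}

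For \eqref{eq:lowerbound_condevent} this inclusion already suffices: applying \eqref{eq:1sD_prob.cond.event} of Proposition~\ref{prop:halfline_norm} to the scalar GFF $\phi^1$ with threshold $b := R_+$ would give
\begin{align*}
\P_n[\Omega^I_{D_n}] \,\geq\, \P_n\big[\phi^1 \geq R_+ \text{ on } D_n\big] \,=\, \exp\!\Big(-\frac{2}{\pi}\,\cpc_\Lambda(D)\,\log^2 n\,(1+o(1))\Big),
\end{align*}
so that $\liminf_n \log \P_n[\Omega^I_{D_n}]/\log^2 n \geq -\frac{2}{\pi}\,\cpc_\Lambda(D)$, as desired.

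For \eqref{eq:norm_upperbound} I plan to start from the identity $|\phi(x)|^2 = \sum_{i=1}^N (\phi^i(x))^2$, which implies that $|\phi(x)| \geq (2+\beta)\log n$ forces $\max_i |\phi^i(x)| \geq (2+\beta)\log n/\sqrt{N}$, and then to transfer the scalar concentration \eqref{eq:1sD_norm} to each coordinate. For $N=1$ this threshold is $(2+\beta)\log n > 2\log n$, and I would conclude by splitting the interval-avoidance event into its two half-line components — arguing that the conditional mass concentrates on the single-sign configurations since a sign flip would force at least one neighbouring pair of values to jump across the forbidden interval $(a,b)$ — and applying \eqref{eq:1sD_norm} to the dominant half-line event. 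For $N \geq 2$ the ratio $(2+\beta)/\sqrt{N}$ may drop below $2$, so the direct coordinate reduction is not strong enough; instead a finer argument is needed, combining the FKG inequality of Proposition~\ref{prop:FKG_condGFF} with a comparison between $\Omega^I_{D_n}$ and the half-line conditioning on $\phi^1$ along a preferred direction, or alternatively a mesoscopic domain-Markov decomposition $\phi = h + \phi^B$ on a box $B$ around $x$, controlling the harmonic part $h(x)$ via Proposition~\ref{prop:halfline_norm} and the Gaussian fluctuation $\phi^B(x)$ on scale $O(\sqrt{\log n})$.

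The main obstacle is precisely this vectorial upper bound: the naive division $\P_n[|\phi(x)| \geq (2+\beta)\log n]/\P_n[\Omega^I_{D_n}]$ cannot suffice, because the unconditional tail is only polynomially small in $n$ while the denominator decays super-polynomially by \eqref{eq:lowerbound_condevent}. Any proof of \eqref{eq:norm_upperbound} must therefore respect the entropic mechanism by which the conditioning pushes the norm upward and show that this push is sharp at the scale $2\log n$ — in effect a warm-up version of the upper bound half of Theorem~\ref{thm:repulsion}, deduced here directly from the scalar hard-wall estimates rather than from the full mesoscopic machinery developed in Section~\ref{sec:repulsion}.
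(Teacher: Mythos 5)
Your treatment of \eqref{eq:lowerbound_condevent} is correct and coincides with the paper's: the inclusion of a one-coordinate hard-wall event in $\Omega^I_{D_n}$ plus \eqref{eq:1sD_prob.cond.event} is all that is needed. The gap is in \eqref{eq:norm_upperbound}. For $N=1$, your plan rests on the claim that under $\P_n[\,\cdot\,|\,\Omega^I_{D_n}]$ the mass concentrates on single-sign configurations because ``a sign flip would force a neighbouring pair to jump across $(a,b)$''. This is not justified and cannot be used here: a jump across the bounded gap $(a,b)$ between lattice neighbours costs only a bounded energy per edge, so such configurations are not a priori negligible; indeed, proving that signs are essentially constant under this conditioning is exactly the content of Theorem~\ref{thm:N=1freezing}, which requires the full mesoscopic machinery of Section~\ref{sec:freezing} and, circularly, uses the very bound \eqref{eq:norm_upperbound} you are trying to prove. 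The step you are missing is much softer: by Proposition~\ref{prop:FKG_condGFF} the conditioned law $\P_n[\,\cdot\,|\,\Omega^I_{D_n}]$ satisfies FKG, the event $\{\phi(x)\geq(2+\beta)\log n\}$ is increasing, and $\Omega^{b,+}_{D_n}\subset\Omega^I_{D_n}$ is increasing, so conditioning further on it only raises the probability, giving
\begin{align*}
\P_n\big[\phi(x)\geq(2+\beta)\log n~\big|~\Omega^I_{D_n}\big]\leq \P_n\big[\phi(x)\geq(2+\beta)\log n~\big|~\Omega^{b,+}_{D_n}\big],
\end{align*}
with the symmetric bound for the negative tail via $\Omega^{-a,+}_{D_n}$; both terms vanish by \eqref{eq:1sD_norm}.

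For $N\geq 2$ you correctly diagnose that the coordinatewise reduction loses a factor $\sqrt N$ and that the naive ratio bound fails, but you then only list candidate strategies without carrying any of them out, so this case is not proved. The paper's argument, which you should compare with your sketch, runs as follows: cover the complement of $\B^N(0,(2+\beta)\log n)$ by finitely many rotations of $\{|y^1|\geq(2+\beta/2)\log n\}$ and use the invariance of both the law of $\phi$ and the event $\Omega^I_{D_n}$ under global rotations to reduce to bounding $\P_{n,N}[\,|\phi^1(x)|\geq(2+\beta/2)\log n\,|\,\Omega^I_{D_n}]$; then condition on $(\phi^j)_{j=2}^N$, under which $\Omega^I_{D_n}$ becomes, for the scalar field $\phi^1$, the avoidance of random symmetric intervals $A_y=(-r_y,r_y)\subset(-R,R)$; for every realization of $A$ the conditioned scalar field satisfies FKG (Proposition~\ref{prop:FKG_condGFF}), so by symmetry and monotonicity the conditional tail is bounded, uniformly in $A$, by twice the corresponding tail under the hard-wall conditioning $\Omega^{R,+}_{D_n}$, and one concludes again by \eqref{eq:1sD_norm}. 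Your alternative suggestion of a mesoscopic domain-Markov decomposition is not needed and would be considerably heavier; the essential missing ideas are the rotational covering, the reduction to a scalar field with random symmetric forbidden intervals, and the uniform FKG comparison with the half-line case.
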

\begin{proof}
    Note that $\Omega_{D_n}^I \supset \{\phi^1(x) \geq R\vee|b|, \; \forall x \in D_n\}$, which by \eqref{eq:1sD_prob.cond.event} readily implies the first result. 
    
    As for the remaining statement, we first establish it for $N=1$. By the FKG property of the conditioned field (Proposition \ref{prop:FKG_condGFF}) and by symmetry, 
    \begin{align*}
        \P_n\bigg[\frac{|\phi(x)|}{\log n} \geq 2+\beta ~\bigg|~ \Omega^{I}_{D_n}\bigg] \leq &\P_n\bigg[\frac{\phi(x)}{\log n} \geq 2+\beta ~\bigg|~ \Omega^{b,+}_{D_n}\bigg] 
        + \P_n\bigg[\frac{\phi(x)}{\log n} \geq 2+\beta ~\bigg|~ \Omega^{-a,+}_{D_n}\bigg].
    \end{align*}
    
    For $N\geq 2$, observe that the complement of the ball $\B^N(0, (2 + \beta) \log n) \subset \R^N$ can be covered by finitely many (the optimal such number $C(N, \beta)$ depends on $N$ and $\beta$, and is non-increasing in the latter) rotations of the set $\{y \in \R^N: |y^1| \geq (2 + \beta/2) \log n\}$. Thus, since the law of $\phi$ and $\Omega^I_{D_n}$ are invariant under a (deterministic) global orthogonal transformation, we get that
    \begin{align*}
        \P_{n,N}\bigg[\frac{|\phi(x)|}{\log n} \geq 2+\beta ~\bigg|~ \Omega^{I}_{D_n}\bigg] &\leq C(N, \beta) \, \P_{n,N}\bigg[\frac{|\phi^1(x)|}{\log n} \geq 2 + \frac{\beta}{2} ~\bigg|~ \Omega^{I}_{D_n}\bigg].
    \end{align*}
    
    The goal now is to estimate the right hand-side by an appropriate expression involving only scalar fields. To this end, we define a collection of forbidden sets for $\phi^1$: for $y \in D_n$, let $$r_y \coloneqq \sqrt{0 \vee \bigg(R^2 - \sum_{j=2}^N (\phi^j(y))^2\bigg)}$$ and set $A = (A_y)_y \coloneqq ((-r_y, r_y))_y$. We further note that given $(\phi^{j})_{j=2}^N$, $\Omega_{D_n}^I$ can be viewed as a $\sigma(\phi^1)$-measurable event $\Omega_{D_n}^{A}(\phi^1) \coloneqq \{\phi^1(y) \notin A_y, \;\forall y \in D_n\}$. By independence of the coordinate processes (under $\P_{n,N}$), we obtain that 
    \begin{align*}
        \P_{n,N}\bigg[\frac{|\phi^1(x)|}{\log n} \geq 2 + \frac{\beta}{2} ~\bigg|~ \Omega^{I}_{D_n}, (\phi^j)_{j=2}^N \bigg] = \tilde{\P}_{n} \bigg[\frac{|\tilde \phi(x)|}{\log n} \geq 2 + \frac{\beta}{2}  ~\bigg|~ \Omega^{A}_{D_n}(\tilde \phi)\bigg],
    \end{align*}
    where $\tilde \phi \sim \tilde{\P}_n$ is an independent scalar Dirichlet GFF on $\Lambda_n$. Note that for any possible realization of $A$, $\tilde\P_n[\cdot | \Omega^{A}_{D_n}]$ satisfies the FKG property by Proposition \ref{prop:FKG_condGFF}. Thus, first by using symmetry and then the FKG property, we obtain that for any possible realization of $A$
    \begin{align*}
        \tilde{\P}_{n} \bigg[\frac{|\tilde \phi(x)|}{\log n} \geq 2 + \frac{\beta}{2} ~\bigg|~ \Omega^{A}_{D_n}(\tilde \phi)\bigg] \leq 2\tilde{\P}_{n} \bigg[\frac{\tilde \phi(x)}{\log n} \geq 2 + \frac{\beta}{2} ~\bigg|~ \Omega^{R, +}_{D_n}\bigg].
    \end{align*}
    Altogether, uniformly in $x \in D_n$,
    \begin{align*}
        \P_{n,N}\bigg[\frac{|\phi(x)|}{\log n} \geq 2+\beta ~\bigg|~ \Omega^{I}_{D_n}\bigg] &\leq 2C(N, \beta) \tilde{\P}_{n} \bigg[\frac{\tilde \phi(x)}{\log n} \geq 2 + \frac{\beta}{2} ~\bigg|~ \Omega^{R, +}_{D_n}\bigg],
    \end{align*}
    which together with \eqref{eq:1sD_norm} readily implies \eqref{eq:norm_upperbound}. 
\end{proof}

\subsection{Avoiding a ball}
Let $n, N \in \N$. If $N = 1$, set $I = (a, b)$ for $a < b \in \R$; otherwise, let $I = \B^N(0, R) \subset \R^N$ for some $R > 0$. Recall that for an $N$-vectorial Dirichlet GFF $\phi$ on $\Lambda_n$, we set
\begin{align*}
    \Omega^{I}_{D_n} \coloneqq \{\phi(x) \notin I, \; \forall x \in D_n\}.
\end{align*}
The objective of this subsection is to show that the norm of $\phi$ upon conditioning on $\Omega^I_{D_n}$ is approximately $2 \log n$ for all but negligibly many sites in $D_n$.  
\begin{theorem}[Entropic repulsion of the norm of GFF by a ball]
\label{thm:repulsion_ball}
    Let $\phi$ be an $N$-vectorial Dirichlet GFF on $\Lambda_n$.
    For any $\beta > 0$ and $\varepsilon > 0$, the set of vertices $x \in D_n$ not satisfying
    \begin{align}
    \label{eq:norm_under_cond}
        \P_n\bigg[\frac{|\phi(x)|}{\log n} \in (2 -\beta, 2 + \beta) ~\bigg|~ \Omega_{D_n}^I\bigg] \geq 1 - \varepsilon
    \end{align}
    is of cardinality at most $o(n^2) \ll |D_n|$ as $n \rightarrow \infty$. 
\end{theorem}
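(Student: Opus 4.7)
The upper-bound half, namely that $\P_n[|\phi(x)|/\log n \leq 2 + \beta \mid \Omega^I_{D_n}] \geq 1 - \varepsilon/2$ uniformly in $x \in D_n$, is already contained in \eqref{eq:norm_upperbound} of Corollary \ref{cor:halfline_implications}. By Markov's inequality applied to the random variable counting sites violating the lower bound, the task reduces to showing
\begin{equation*}
  \E_n\bigl[\#\{x \in D_n : |\phi(x)| \leq (2-\beta)\log n\} \bigm| \Omega^I_{D_n}\bigr] = o(n^2).
\end{equation*}
Writing every such $x$ as the centre of a box in $\Pi_\alpha(x_0)$ for a (essentially unique) $x_0 \in \Lambda_{n^\alpha}$, and summing over the $\sim n^{2\alpha}$ choices of $x_0$, it suffices to control, uniformly in $x_0$, the expected number of bad centres $x_B$ with $B \in \Pi_\alpha(x_0) \cap D_n$ under $\Omega^I_{D_n}$ by $o(n^{2(1-\alpha)})$. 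Throughout, $\alpha \in (0,1)$ is chosen close enough to $1$ that $(2-\beta/2)/\alpha < 2$.

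Applying the domain Markov decomposition $\phi(x_B) = h_B + \phi^B(x_B)$ from Lemma \ref{lemma:domainMP} yields the set-inclusion
\begin{equation*}
  \{|\phi(x_B)| \leq (2-\beta)\log n\} \subset \{|h_B| \leq (2-\beta/2)\log n\} \cup \{|\phi^B(x_B)| \geq (\beta/2)\log n\}.
\end{equation*}
The second event is straightforward: by Lemma \ref{lemma:domainMP} the family $(\phi^B(x_B))_{B}$ is jointly independent, each coordinate being Gaussian of variance $\sim \alpha \log n$ by Lemma \ref{lemma:planar_Greens_fct}, hence with individual tail at most $n^{-c}$ for a constant $c = c(\beta, N, \alpha) > 0$. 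Since the expected count of such high-fluctuation boxes is then $o(|\Pi_\alpha(x_0)|)$, Bernstein's inequality for sums of independent bounded indicators yields an unconditional bound of order $\exp(-c'\delta\, n^{2(1-\alpha)})$ on the event that it exceeds $\delta|\Pi_\alpha(x_0)|$. This dominates the lower bound $\P_n[\Omega^I_{D_n}] \geq e^{-C\log^2 n}$ from \eqref{eq:lowerbound_condevent}, and so survives the conditioning.

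The first event is where Theorem \ref{thm:2Dno_hole} is invoked. Conditioning on $\F_\alpha(x_0)$ decouples the $\phi^B$'s, and $\Omega^I_{D_n}$ requires $\phi^B(x) \notin I - h^{\primorial_\alpha(x_0)}(x)$ for every $x \in B$. Restricting to a sub-ball $\B^2(x_B, \rho n^\alpha) \subset B$ of fixed small ratio $\rho$, on which by \eqref{eq:harmdiff_bound} the oscillation of $h^{\primorial_\alpha(x_0)}$ is bounded by a small constant on a high-probability event, one replaces the non-constant shift by the constant shift $-h_B$ and the target $I$ by a slightly shrunken ball $\tilde I$ of positive radius. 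For a suspect box ($|h_B| \leq (2-\beta/2)\log n$), the normalised shift $|h_B|/\log(n^\alpha) \leq (2-\beta/2)/\alpha < 2$ is admissible for Theorem \ref{thm:2Dno_hole} applied to the rescaled GFF on $\Lambda_{n^\alpha}$, giving a per-box factor $q_n = o(1)$. Independence across boxes yields the upper bound $q_n^{\delta|\Pi_\alpha(x_0)|}$ for the event of at least $\delta|\Pi_\alpha(x_0)|$ suspect boxes, which upon dividing by $\P_n[\Omega^I_{D_n}]$ still vanishes as long as $q_n$ has a mild quantitative decay.

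\textbf{Main obstacle.} The delicate point is precisely this quantitative input from Theorem \ref{thm:2Dno_hole}: the statement provides only $q_n \to 0$ with no explicit rate, while one needs enough decay to overcome the cost $e^{-C\log^2 n}$ of the conditioning. A close reading of its proof—specifically the variance computation underlying Lemma \ref{lemma:pos_centers_estimate}—yields a rate of order $q_n = O((\log\log n)/\log n)$, which is amply sufficient against the polynomially large $|\Pi_\alpha(x_0)| \sim n^{2(1-\alpha)}$. The secondary nuisance of the non-constancy of $h^{\primorial_\alpha(x_0)}$ inside $B$ is absorbed by the sub-ball restriction and the replacement of $I$ by $\tilde I$, under a single small threshold $\rho > 0$.
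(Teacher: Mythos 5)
Your overall strategy is the same as the paper's: reduce via \eqref{eq:norm_upperbound} to the lower bound, decompose $\phi(x_B)=h_B+\phi^B(x_B)$ on the $\alpha$-grids, dispose of the high-fluctuation boxes by independence plus Bernstein against the cost $e^{-C\log^2 n}$ coming from \eqref{eq:lowerbound_condevent} (this is Proposition \ref{prop:few_high_fluct_boxes}), dispose of the low-$|h_B|$ boxes by feeding the shift $-h_B$ into Theorem \ref{thm:2Dno_hole} inside each box (this is Proposition \ref{prop:few_low_boxes}), and finish by a Markov/counting argument over the $\sim n^{2\alpha}$ disjoint grids. However, the ``main obstacle'' you single out is not an obstacle at all: on the event that there are at least $\delta|\Pi_\alpha(x_0)|$ suspect boxes, the conditional probability of $\Omega^I_{D_n}$ given $\F_\alpha(x_0)$ is at most $q_n^{\delta |\Pi_\alpha(x_0)|}$ with $|\Pi_\alpha(x_0)|\asymp n^{2(1-\alpha)}$, so any bound of the form $q_n\leq 1/2$ already yields $e^{-c\,n^{2(1-\alpha)}}\ll e^{-C\log^2 n}$; no quantitative rate in Theorem \ref{thm:2Dno_hole} is needed, and this is exactly how the paper argues. (Your claimed extraction of a rate $O(\log\log n/\log n)$ from Lemma \ref{lemma:pos_centers_estimate} is in any case unsupported for the full theorem, uniformly over shifts $t$ and in particular in the $s\neq 0$ regime of Proposition \ref{prop:no_hole_harmonic}, but this is moot.)

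The genuine gap is in the oscillation control of $h^{\primorial_\alpha(x_0)}$ on a sub-ball of \emph{fixed} ratio $\rho$. Estimate \eqref{eq:harmdiff_bound} gives pointwise variance at most $\tilde c\rho^2$, a constant, so the probability that the oscillation over $\B^2(x_B,\rho n^\alpha)$ stays below the fixed small constant needed to replace $I$ by the shrunken $\tilde I$ with constant shift $h_B$ is only $1-e^{-c/\rho^2}$ per box, i.e.\ bounded away from $1$ uniformly in $n$; moreover these events are correlated across boxes, so you cannot conclude that outside an event of probability $o(\P_n[\Omega^I_{D_n}])=o(e^{-C\log^2 n})$ all (or even a fixed fraction of) the suspect boxes enjoy the oscillation bound — a Markov bound only gives a constant-size error, which does not survive division by $\P_n[\Omega^I_{D_n}]$, and without it the per-box factor $q_n$ for those boxes is lost. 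The repair is to let the sub-ball radius shrink, e.g.\ take radius $n^\alpha\log^{-2}n$, which is still admissible in Theorem \ref{thm:2Dno_hole} since $\log(\log^2 n)=o(\log n)$: each pointwise deviation probability then becomes $e^{-c\log^4 n}$ and a crude union bound over the $O(n^2)$ relevant sites is $\ll e^{-C\log^2 n}$. This is precisely what the paper does in the proof of Proposition \ref{prop:few_low_boxes}; with that modification your argument coincides with the paper's proof.
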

Throughout this subsection, we extensively use the notation of $\alpha$-subgrids/boxes introduced in Section \ref{sec:setup} (see also Figure \ref{fig:setup}). 

The two key ingredients (proven in Subsection \ref{subsec:key_ingredients_repulsion} below) in addition to Corollary \ref{cor:halfline_implications}, to establish that the norm of the field under conditioning must be at least $(2 - \beta) \log n$ for ``almost" all vertices of $D_n$ are the following. 
Firstly, we prove that for a given $\beta > 0$, the number of boxes in $\Pi_\alpha(x_0)$ with $|h_B| < (2 - \beta) \log n$ is negligible on $\Omega^I_{D_n}$ if $\alpha$ is close enough to one. 
\begin{proposition}[Number of \emph{low} boxes is negligible]
\label{prop:few_low_boxes}
    Let $\beta > 0$. There exists $\alpha_0(\beta) \in (0,1)$ sufficiently close to one such that for all $\alpha_0(\beta) \leq \alpha < 1$, $u \in (0, 2(1-\alpha))$, and all $x_0 \in \Z^2$, 
    \begin{align*}
        \P_n\big[\Omega^I_{D_n} \cap \{N^\alpha(\beta, x_0) \geq n^u\} \big] \leq e^{-(\frac{2}{\pi} \cpc_\Lambda(D) + 1) \log^2 n},
    \end{align*}
    for all $n$ sufficiently large (depending on $\beta, u, \alpha$). Here, 
    \begin{align} \label{eq:Nalpha_low_h}
        N^\alpha(\beta, x_0) \coloneqq \#\big\{B \in \Pi_\alpha(x_0): |h_B| < (2 - \beta) \log n\big\}.
    \end{align}
\end{proposition}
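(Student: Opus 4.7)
The strategy is to exploit the domain Markov property at scale $n^\alpha$ (Lemma~\ref{lemma:domainMP}) to split the field into independent local GFFs, and then to invoke Theorem~\ref{thm:2Dno_hole} separately inside each low box. Conditionally on $\F_\alpha(x_0)$, the family $(\phi^B)_{B \in \Pi_\alpha(x_0)}$ consists of independent Dirichlet GFFs on the interiors of the boxes, and $\phi = h + \phi^B$ on each $B$, where $h \coloneqq h^{\primorial_\alpha(x_0)}$; in particular, on $\Omega^I_{D_n}$ every $B \in \Pi_\alpha(x_0)$ must satisfy $\phi^B(x) + h(x) \notin I$ for all $x \in B$. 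Writing $\mathcal L(x_0) \coloneqq \{B \in \Pi_\alpha(x_0) : |h_B| < (2-\beta)\log n\}$ for the set of low boxes (so that $|\mathcal L(x_0)| = N^\alpha(\beta, x_0)$) and fixing a sub-box $B' \subset B$ of radius $\rho n^\alpha$ centered at $x_B$, the independence of the $(\phi^B)$ given $\F_\alpha(x_0)$ yields
\begin{equation*}
    \P_n\big[\Omega^I_{D_n} \,\big|\, \F_\alpha(x_0)\big] \leq \prod_{B \in \mathcal L(x_0)} q_B, \quad \text{where } q_B \coloneqq \P_n\big[\phi^B(x) + h(x) \notin I,\; \forall x \in B' \,\big|\, \F_\alpha(x_0)\big].
\end{equation*}

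The main obstacle is that Theorem~\ref{thm:2Dno_hole} only handles constant shifts, whereas $h$ varies inside each $B$. To reduce to the constant-shift case, I introduce the good event
\begin{equation*}
    \mathcal G \coloneqq \bigcap_{B \in \Pi_\alpha(x_0)} \Big\{\sup_{x \in B'} |h(x) - h_B| \leq R/2\Big\}
\end{equation*}
(in the $N = 1$ case with $I = (a,b)$ the identical argument applies with $R \coloneqq (b-a)/2$ and the interval $(a + R/2, b - R/2)$ in place of $\B^N(0, R/2)$ throughout). On $\mathcal G$, the inclusion $\{\phi^B(x) + h(x) \notin \B^N(0,R)\} \subseteq \{\phi^B(x) + h_B \notin \B^N(0, R/2)\}$ gives, for each $B \in \mathcal L(x_0)$, that $q_B \cdot \ind_{\mathcal G} \leq \ind_{\mathcal G} \cdot \P_n[\phi^B(x) + h_B \notin \B^N(0, R/2),\ \forall x \in B' \mid \F_\alpha(x_0)]$. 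Since $\phi^B$ is independent of $\F_\alpha(x_0)$ with the law of a Dirichlet GFF on a box of side $n^\alpha$ and $|h_B| < (2-\beta)\log n = \tfrac{2-\beta}{\alpha}\log n^\alpha$, setting $\alpha_0(\beta) \coloneqq (2-\beta)/(2-\beta/2) < 1$ guarantees $(2-\beta)/\alpha < 2(1-\beta/4)$ for every $\alpha > \alpha_0(\beta)$. Theorem~\ref{thm:2Dno_hole} applied at scale $n^\alpha$ with $\varepsilon = \beta/4$ and $r_{n^\alpha} = \rho$ then yields $q_B \cdot \ind_{\mathcal G} \leq p_n \cdot \ind_{\mathcal G}$ for some $p_n \to 0$ as $n \to \infty$.

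To control $\P_n[\mathcal G^c]$, I use the variance estimate \eqref{eq:harmdiff_bound}: each coordinate $(h(x) - h_B)^i$ is a centered Gaussian with $\Var_n \leq \tilde c \rho^2$ uniformly on $B'$. Borell--TIS applied coordinate-wise, combined with union bounds over the $N$ coordinates and the at most $n^{2(1-\alpha)}$ boxes in $\Pi_\alpha(x_0)$, yields
\begin{equation*}
    \P_n[\mathcal G^c] \leq 2N\, n^{2(1-\alpha)} \exp\!\Big(-\frac{R^2}{32 N \tilde c\, \rho^2}(1+o(1))\Big).
\end{equation*}
Choosing $\rho = 1/(A \log n)$ with $A = A(R, N, \tilde c, \cpc_\Lambda(D))$ large enough makes this at most $e^{-(\frac{2}{\pi}\cpc_\Lambda(D) + 2)\log^2 n}$, while $\log(1/\rho) = o(\log n)$ remains as required by Theorem~\ref{thm:2Dno_hole}.

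Since $\{N^\alpha(\beta, x_0) \geq n^u\} = \{|\mathcal L(x_0)| \geq n^u\}$ is $\F_\alpha(x_0)$-measurable, the above ingredients combine to give
\begin{equation*}
    \P_n\big[\Omega^I_{D_n} \cap \{N^\alpha(\beta, x_0) \geq n^u\}\big] \leq \P_n[\mathcal G^c] + \E_n\big[p_n^{|\mathcal L(x_0)|} \ind_{\{|\mathcal L(x_0)| \geq n^u\}}\big] \leq \P_n[\mathcal G^c] + p_n^{n^u}.
\end{equation*}
For $n$ large enough $p_n \leq 1/2$, so $p_n^{n^u} \leq 2^{-n^u}$, which is super-polynomially small and hence dominated by $e^{-(\frac{2}{\pi}\cpc_\Lambda(D)+1)\log^2 n}$ for any $u > 0$ since $n^u \gg \log^2 n$. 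Combined with the bound on $\P_n[\mathcal G^c]$, this proves the proposition uniformly in $x_0 \in \Z^2$.
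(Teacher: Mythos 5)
Your proposal is correct and follows essentially the same route as the paper's proof: shrink the forbidden set to absorb the oscillation of the harmonic extension near box centers (controlled via \eqref{eq:harmdiff_bound} and a Gaussian concentration bound), use the domain Markov property to get conditional independence across the low boxes, and apply Theorem \ref{thm:2Dno_hole} at scale $n^\alpha$ with the same threshold $\alpha_0(\beta)=\frac{2-\beta}{2-\beta/2}$, so that the resulting $p_n^{n^u}\leq 2^{-n^u}$ is super-polynomially small. The only differences are cosmetic: the paper takes neighborhoods of radius $n^\alpha\log^{-2}n$ (giving an $e^{-c\log^4 n}$ bound for the bad oscillation event by a pointwise union bound) where you take radius $\rho n^\alpha$ with $\rho=1/(A\log n)$ and Borell--TIS with a tuned constant $A$; both choices satisfy the hypotheses of Theorem \ref{thm:2Dno_hole} and yield the claimed bound.
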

\noindent Secondly, to be able to get back to $\phi$, we analyze the family $(\phi^B)_{B \in \Pi_\alpha(x_0)}$. To this end, for $\eta > 0$ and $T \subset \Lambda_{n^\alpha/2}$, we define 
\begin{align}\label{eq:Malpha_high_fl}
    M^\alpha_{T}(\eta, x_0) \coloneqq \# \Big\{B \in \Pi_\alpha(x_0): \sup_{y \in T}|\phi^B({x_B + y})| > \eta \log n \Big\},
\end{align}
which intuitively corresponds to the number of boxes with high fluctuations around centers. When $T = \{0\}$, we omit the subscript. We show that $M^\alpha_T(\eta, x_0)$ is small when $\eta$ and the cardinality of $T$ (depending on $\eta$) are sufficiently small.
\begin{proposition}[Number of boxes with high fluctuations around centers is negligible]
\label{prop:few_high_fluct_boxes}
    Let $\alpha \in (0,1)$. There exists $\eta_0(\alpha, N) > 0$ such that for all $x_0 \in \Z^2$, for all $0 <\eta \leq \eta_0(\alpha, N)$ and $T \subset \Lambda_{n^\alpha/2}$ of cardinality at most $n^{\frac{\eta^2}{8 \alpha N}}$, 
    \begin{align*}
        \P_n\Big[\Omega^I_{D_n} \cap \big\{M^\alpha_{T}(\eta, x_0) \geq n^{-\eta^3} |\Pi_\alpha(x_0)|\big\} \Big] \leq e^{-n^{c}}
    \end{align*}
    for any $0<c = c(\alpha, \eta)<2(1-\alpha) - \eta^3$.
\end{proposition}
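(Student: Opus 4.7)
The plan is to reduce the statement to a tail bound on a sum of independent Bernoulli variables: the domain Markov property decouples the fluctuation fields across the boxes of $\Pi_\alpha(x_0)$, and then a Gaussian tail estimate together with Bernstein's inequality finishes the argument. The event $\Omega^I_{D_n}$ can be discarded at the outset via
\begin{align*}
\P_n\bigl[\Omega^I_{D_n} \cap \{M^\alpha_T(\eta, x_0) \geq n^{-\eta^3} |\Pi_\alpha(x_0)|\}\bigr] \leq \P_n\bigl[M^\alpha_T(\eta, x_0) \geq n^{-\eta^3} |\Pi_\alpha(x_0)|\bigr];
\end{align*}
since $\P_n[\Omega^I_{D_n}] \geq e^{-C\log^2 n}$ by Corollary \ref{cor:halfline_implications} whereas the target is the much smaller $e^{-n^c}$, no delicate interplay with the conditioning event is required.

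Next, by Lemma \ref{lemma:domainMP}, the fields $(\phi^B)_{B \in \Pi_\alpha(x_0)}$ are mutually independent, each a Dirichlet GFF on the interior of its box. The indicators
\begin{align*}
X_B \coloneqq \ind\Bigl\{\sup_{y \in T} |\phi^B(x_B + y)| > \eta \log n\Bigr\}
\end{align*}
are therefore i.i.d.\ Bernoullis and $M^\alpha_T(\eta, x_0) = \sum_B X_B$. Since $T \subset \Lambda_{n^\alpha/2}$, every site $x_B + y$ sits at distance at least $n^\alpha/4$ from $\partial B$, so $\Var_n[(\phi^B)^i(x_B+y)] = G_B(x_B+y, x_B+y) \leq \alpha \log n + \mathcal{O}(1)$ by \eqref{eq:Greensfct_bounds}. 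Using the elementary implication $|v| > r \Rightarrow \max_i|v^i| > r/\sqrt{N}$, a union bound over the $N$ coordinates and the $|T| \leq n^{\eta^2/(8N\alpha)}$ points, combined with the standard Gaussian tail, yields
\begin{align*}
p \coloneqq \P_n[X_B = 1] \leq 2N |T|\cdot n^{-\eta^2/(2N\alpha) + o(1)} \leq n^{-3\eta^2/(8N\alpha) + o(1)},
\end{align*}
uniformly in $B$ and $x_0$.

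Finally, I would pick $\eta_0(\alpha, N) \coloneqq 1/(4N\alpha)$ (or any smaller positive quantity) so that $\eta^3 < 3\eta^2/(8N\alpha)$ whenever $\eta \leq \eta_0$. Setting $k \coloneqq n^{-\eta^3}|\Pi_\alpha(x_0)|$, this choice forces $\E_n M^\alpha_T \leq p|\Pi_\alpha(x_0)| = o(k)$, and also $\Var_n(M^\alpha_T) \leq \E_n M^\alpha_T = o(k)$ since the summands are Bernoulli. Bernstein's inequality for sums of independent bounded random variables then produces
\begin{align*}
\P_n[M^\alpha_T \geq k] \leq \exp\!\biggl(-\frac{(k - \E_n M^\alpha_T)^2}{2\Var_n(M^\alpha_T) + \tfrac{2}{3}(k - \E_n M^\alpha_T)}\biggr) \leq \exp\!\bigl(-c' n^{2(1-\alpha) - \eta^3}\bigr)
\end{align*}
for some $c' = c'(\alpha, \eta) > 0$ and all $n$ large, which is in turn bounded above by $e^{-n^c}$ for every $c < 2(1-\alpha) - \eta^3$, as required. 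The only bookkeeping work — rather than a genuine obstacle — is to coordinate the three small exponents ($\eta^2/(8N\alpha)$ coming from $|T|$, $\eta^2/(2N\alpha)$ from the Gaussian tail, and $\eta^3$ from the target density of bad boxes) so that Bernstein can actually be triggered; this is what forces the scale $\eta_0 \sim 1/(N\alpha)$.
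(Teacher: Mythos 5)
Your proposal is correct and follows essentially the same route as the paper: drop the event $\Omega^I_{D_n}$, use the domain Markov property to make the box indicators i.i.d., bound their success probability by a union bound over coordinates and the points of $T$ together with the Gaussian tail (using $\Var \approx \alpha\log n$ from \eqref{eq:Greensfct_bounds}), and conclude with Bernstein's inequality, the constraint on $\eta_0$ serving exactly to make the mean and variance of $M^\alpha_T$ negligible compared to $n^{-\eta^3}|\Pi_\alpha(x_0)|$. The only differences are cosmetic (your $\eta_0 = 1/(4N\alpha)$ versus the paper's $(12\alpha N)^{-1}$, and your explicit—though unnecessary—appeal to the lower bound on $\P_n[\Omega^I_{D_n}]$ when discarding the intersection).
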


The statement of Theorem \ref{thm:repulsion_ball} now follows from the fact that the random sets of \emph{good} centers 
$$\G^\alpha(x_0, \beta, \eta) \coloneqq \Big\{x_B: B \in \Pi_\alpha(x_0), |h_B| \geq (2 - \beta) \log n, \;|\phi^B({x_B})| \leq \eta \log n \Big\}$$ 
for a fixed $\alpha \in (0,1)$ (sufficiently close to one) and different $x_0 \in \Lambda_{n^\alpha}$ produce disjoint deterministic sets of vertices satisfying \eqref{eq:norm_under_cond}, whose union is of cardinality $|D_n|(1 - o(1))$. 
\begin{proof}[Proof of Theorem \ref{thm:repulsion_ball}]
    Note that \eqref{eq:norm_upperbound} allows us to reduce the claim of the theorem to showing that 
    \begin{align}
        \label{eq:simplified.norm_under_cond}
        \P_n \big[|\phi(x)| \geq (2 - \beta) \log n ~\big|~ \Omega^I_{D_n}\big] \geq 1 -\varepsilon
    \end{align}
    for all but $o(n^2)$ points $x \in D_n$.
    Let $\alpha \in (0,1)$, $\eta > 0$, and notice that 
    \begin{align*}
        |\G^\alpha(x_0, \beta/2, \eta)| \geq |\Pi_\alpha(x_0)| - N^\alpha(\beta/2, x_0) - M^\alpha(\eta, x_0).
    \end{align*}
    Hence, by Propositions \ref{prop:few_low_boxes} and \ref{prop:few_high_fluct_boxes} and \eqref{eq:lowerbound_condevent}, we can choose $\alpha$ sufficiently close to $1$ and $\eta >0$ sufficiently small—for example, $\alpha = \alpha_0(\beta/2)$, $\eta \coloneqq (\beta/2) \wedge \eta_0(\alpha, N) \wedge (1-\alpha)^{1/3}$—so that
    \begin{align*}
        \E_n\big[ &|\G^\alpha(x_0, \beta/2, \eta)| ~\big|~ \Omega^I_{D_n}\big] \\
         &\geq \left(|\Pi_\alpha(x_0)| - 2 n^{2(1-\alpha) - \eta^3}\right) \P_n \big[N^\alpha(\beta/2, x_0)\vee M^\alpha (\eta, x_0) \leq n^{2(1-\alpha) - \eta^3}~\big|~ \Omega^I_{D_n}\big] \\
        &\geq \left(|\Pi_\alpha(x_0)| - 2 n^{2(1-\alpha) - \eta^3} \right) \big(1 - 2e^{-\frac{1}{2}\log^2 n}\big)
        = |\Pi_\alpha(x_0)| \big(1 - \mathcal{O}(n^{- \eta^3})\big)
    \end{align*}
    for all $n$ sufficiently large. On the other hand,
    \begin{align*}
        \E_n\big[ |\G^\alpha(x_0, \beta/2, \eta)| ~\big|~ \Omega^I_{D_n}\big] \leq \sum_{B \in \Pi_\alpha(x_0)} \P_n\big[ |\phi({x_B})| \geq (2 - \beta) \log n ~\big|~ \Omega^I_{D_n}\big].
    \end{align*}
    The two observations imply that for all sufficiently large $n$ (depending only on $\beta$ and $\varepsilon$), the set 
    \begin{align*}
        \Cc_n(x_0) \coloneqq \Big\{x_B: B \in \Pi_\alpha(x_0), \, \P_n\big[ |\phi({x_B})| \geq (2 - \beta) \log n ~\big|~ \Omega^I_{D_n}\big] \geq 1 - \varepsilon\Big\}
    \end{align*} 
    contains at least $|\Pi_\alpha(x_0)|\big(1 - n^{-\frac{\eta^3}{2}}\big)$ sites. 
    Since by construction $(\Cc_n(x_0))_{x_0 \in \Lambda_{n^\alpha} \setminus \partial_{\text{in}} \Lambda_{n^\alpha}}$ are pairwise disjoint and 
    \begin{align*}
    \tilde D_n \coloneqq \big\{x \in D_n: \P_n\big[|\phi(x)| \geq (2 - \beta) \log n ~\big|~ \Omega^I_{D_n}\big] \geq 1 - \varepsilon\big\} \supset \Cc_n(x_0)
    \end{align*} 
    for any $x_0$, as desired, we obtain that 
    \begin{align*}
        |\tilde D_n| \geq \sum_{x_0 \in \Lambda_{n^\alpha} \setminus \partial_{\text{in}} \Lambda_{n^\alpha}} |\Pi_\alpha(x_0)|\big(1 - n^{- \frac{\eta^3}{2}}\big) = |D_n| (1 - o(1)).
    \end{align*}
\end{proof}

\subsection{Proof of the key ingredients}
\label{subsec:key_ingredients_repulsion}

We now prove the two aforementioned key ingredient, Propositions \ref{prop:few_low_boxes} and \ref{prop:few_high_fluct_boxes}.

\subsubsection{Number of low boxes is negligible} 
To show that there are not many low boxes—$B\in \Pi_\alpha(x_0)$ with $|h_B| < (2 - \beta) \log n$—when $\alpha\in(0,1)$ is sufficiently close to one, we exploit two facts: that the harmonic extension $h^{\primorial_\alpha(x_0)}$ is almost constant close to the centers of boxes $B$, and that the field $\phi^B$ does not have a hole around a given point ($-h_B(\omega)$) in its range if the latter is bounded away from the boundary of the range (which is the case for low boxes)—cf. Theorem \ref{thm:2Dno_hole}. We then conclude using the domain Markov property of $\phi$. For better readability, we suppress the dependence on $x_0$. 
\begin{proof}[Proof of Proposition \ref{prop:few_low_boxes}]
    Let $l \coloneqq (b-a) \ind_{\{N=1\}} + 2R \ind_{\{N\geq 2\}}$ and define
    \begin{align*}
        F \coloneqq \bigcup_{B \in \Pi_\alpha} \bigg\{\sup\Big\{|h^{\primorial_\alpha}(x) - h_B|: x \in D_n, \, |x - x_B| < n^\alpha\log^{-2} n \Big\} \geq \frac{l}{4}\bigg\}.
    \end{align*}
    Note that all $x_B, x$ in the above expression are at least at distance $\upiota n/4$ from the outer boundary of $\Lambda_n$. Therefore, by \eqref{eq:harmdiff_bound}, we obtain that for all $B \in \Pi_\alpha$ and $x \in D_n$ such that $|x-x_B| < n^\alpha \log^{-2}n$, the variance of the $i$th coordinate—$\Var_n[(h^{\primorial_\alpha}(x) - h_B)^i]$—is bounded by $c\log^{-4} n$ for an absolute constant $c>0$. Combined with the union bound, this implies that uniformly in $x_0 \in \Z^2$,
    \begin{align*}
        \P_n[F] &\leq n^2 N \sup \bigg\{\P_n \bigg[ \big|(h^{\primorial_\alpha}(x) - h_B)^1 \big| \geq \frac{l}{4\sqrt{N}} \bigg]: B \in \Pi_\alpha, \,
        x \in D_n, \, |x-x_B| < \frac{n^\alpha}{\log^{2} n} \bigg\} \\&\leq 
        n^2 N e^{-\frac{l^2}{32 c N}\log^4 n} = o\big(e^{-(\frac{2}{\pi} \cpc_\Lambda(D) + 1) \log^2 n}\big)
    \end{align*}
    for all $n$ sufficiently large. We thus can work on the complement of $F$. 
    
    We define $I'$ as $(a + l/4, b - l/4)$ if $N = 1$, and as $\B^N(0, R/2)$ if $N \geq 2$, to compensate for the deviations of $h^{\primorial_\alpha}$ allowed on $F^c$; and denote the set of low boxes—boxes contributing to $N^\alpha(\beta)$—by $\mathcal{P}^\alpha(\beta)$. By looking only at the vertices $x$ sufficiently close to the centers of low boxes, by the domain Markov property of $\phi$ (Lemma \ref{lemma:domainMP}), for any $\omega \in F^c \cap \{N^\alpha(\beta) \geq n^u\}$, we obtain
    \begin{align*}
        \P_n\big[\Omega^I_{D_n} ~\big|~\F_{\primorial_\alpha}\big](\omega) &\leq \max_{B \in \mathcal{P}^\alpha(\beta)(\omega)} \P_n \Big[\phi^B(x) + h_B(\omega) \notin I', \;\forall x \in \mathcal{B}^2\big(x_B, n^\alpha\log^{-2}n\big) \Big]^{n^u}\\
        &\leq \sup_{t\in \B^N (0, (2-\beta) \log n)} \tilde{\P}_{n^\alpha}\Big[\tilde \phi(x) + t \notin I', \;\forall x \in \mathcal{B}^2 \big(0, n^\alpha\log^{-2}n\big)\Big]^{n^u} \eqqcolon p^{n^{u}}.
    \end{align*}
    Here, $\tilde \phi \sim \tilde{\P}_{n^\alpha}$ is an $N$-vectorial Dirichlet GFF on $\Lambda_{n^\alpha}$. By choosing 
    \begin{align} \label{eq:alpha_choice}
        \alpha_0(\beta) \coloneqq \frac{2 - \beta}{2 - \beta/2} \in (0,1),
    \end{align} 
    the proof is complete by Theorem \ref{thm:2Dno_hole}. Indeed, for any $\alpha \in (\alpha_0(\beta), 1)$, the latter implies that $p$ is, for instance, smaller than $1/2$ for all $n$ sufficiently large; which taken to the power $n^u$ is of even lower order than we wanted to show.
\end{proof}

\subsubsection{Number of boxes with high fluctuations around centers is negligible}
To prove that the number of boxes with high fluctuations around the centers $x_B \in D_n$ is negligible, we observe (using the domain Markov property of GFF) that it can be rewritten as a sum of i.i.d. indicator functions of events of low probability. We then use Bernstein's inequality to estimate its tail probabilities.   
For better readability, we suppress the dependence on $x_0$. 
\begin{proof}[Proof of Proposition \ref{prop:few_high_fluct_boxes}]
    For $\alpha \in (0,1)$, $\eta > 0$, and $T \subset \Lambda_{n^\alpha/2}$, define 
    \begin{align*}
        A_{T, \eta}(B) \coloneqq \Big\{\sup_{y \in T}|\phi^B({x_B + y})| > \eta \log n \Big\}.
    \end{align*}
    Note that by the domain Markov property of GFF (see Lemma \ref{lemma:domainMP}), under $\P_n$, the events $(A_{T, \eta}(B))_{B \in \Pi_\alpha}$ are independent and have the same probability. To estimate the latter, observe that for any $y \in T$ and $B \in \Pi_\alpha$, 
    $$\Var_n\big[(\phi^B({x_B + y}))^1 \big] = G_B (x_B + y, x_B + y) = G_{\Lambda_{n^\alpha}}(y,y) \in g \log (n^\alpha) + (- c, c)$$ for an appropriate absolute constant $c > 0$ (see \eqref{eq:Greensfct_bounds}). Hence, for all $n$ sufficiently large,
    \begin{align*}
        \P_n [A_{T, \eta}(B) ] &\leq |T| \sup_{y \in T} \P_n\big[|\phi^B({x_B + y})| > \eta \log n\big] \\
        &\leq N|T| \sup_{y \in T} \P_n\bigg[|(\phi^B({x_B + y}))^1| > \frac{\eta \log n}{\sqrt{N}} \bigg]\\
        &\leq 2N |T| \P\bigg[X \geq \sqrt{\frac{\eta^2\log n}{2 \alpha N}} \bigg] = \mathcal{O}\Big(n^{-\frac{\eta^2}{8 \alpha N}}\Big),
    \end{align*}
    where $X$ is a standard normal variable.

    Now that we have $M^\alpha_{T}(\eta) = \sum_{B \in \Pi_\alpha} \ind_{A_{T, \eta}(B)}$ written as a sum of i.i.d. bounded random variables, we want to use Bernstein's inequality. To do so, we first need to center the variables: for $B\in \Pi_\alpha$, define
    \begin{align*}
        Z_B \coloneqq \ind_{A_{T, \eta}(B)} - \P_n [A_{T, \eta}(B)] \, \in [-1, 1].
    \end{align*}
    By the above discussion, we get that
    $$\Var[Z_B] = \P_n [A_{R, \eta}(B)] - \P_n [A_{R, \eta}(B)]^2 = \mathcal{O}\Big(n^{-\frac{\eta^2}{8 \alpha N}}\Big).$$
    Therefore, for all $\eta \leq (12 \alpha N)^{-1}$ (for the first inequality and relation $t \gg \Var[Z_B] |\Pi_\alpha|$ with $t$ defined below to be true), by Bernstein's inequality for independent centered bounded (by $1$) variables, we obtain
    \begin{align*}
        \P_n \big[M^\alpha_T(\eta) \geq n^{-\eta^3} |\Pi_\alpha| \big] &\leq \P\bigg[ \sum_{B \in \Pi_\alpha} Z_B \geq \underbrace{n^{-\eta^3} |\Pi_\alpha| \Big(1- \mathcal{O}\big(n^{-\frac{\eta^3}{2}}\big)\Big)}_{\eqqcolon t}\bigg]\\
        &\leq \exp\bigg({-\frac{t^2}{2t/3 + 2\Var[Z_B] |\Pi_\alpha|}}\bigg) \leq e^{-t}.
    \end{align*}
    But notice that $t \geq n^c$ for any $0<c< 2(1-\alpha) - \eta^3$. This completes the proof with $\eta_0(\alpha, N) \coloneqq (12\alpha N)^{-1}$.
\end{proof}


\section[Freezing of spins]{Freezing of spins of the conditioned field}
\label{sec:freezing}
The goal of this section is to prove freezing of spins $\sigma = \phi/|\phi|$ for the (vector-valued) Dirichlet GFF $\phi$ conditioned to avoid a ball (see Theorem \ref{thm:freezing}). We first address the vector-valued case ($N \geq 2$) in Subsection \ref{subsec:vector.freezing}, where we establish alignment of spins at all mesoscopic scales (Theorem \ref{thm:vector.freezing}). This result is weaker than its scalar counterpart and follows from the ingredients developed so far.
In Subsection \ref{subsec:N=1freezing}, we turn to the scalar case ($N = 1$), where we prove that spins are completely aligned at all scales, except on a negligible fraction of sites (Theorem \ref{thm:N=1freezing}), and obtain an upper bound for the probability of the conditioning event (Proposition \ref{prop:N=1upperbound_condition}).

\subsection{Vectorial case: Freezing at mesoscopic scales}
\label{subsec:vector.freezing}
Throughout this subsection let $N\geq 2$, $\phi$ be an $N$-vectorial Dirichlet GFF on $\Lambda_n$, and $I$ be an open ball $\B^N(0,R) \subset \R^N$ for some $R>0$. The objective of this subsection is to prove that spins of the conditioned field freeze at mesoscopic scales.
\begin{theorem}[Freezing of spins of the conditioned field]
    \label{thm:vector.freezing}
    For every $\nu \in (0,1)$ and for any $\varepsilon > 0$, there exists $\tilde D_n \subset D_n$ such that $|D_n \setminus \tilde D_n| = o(n^2)$ and for all $x,y \in \tilde D_n$ with $|x-y| \leq n^\nu$, 
    \begin{align}  \label{eq:freezing_pairs}
        \E_n\bigg[ \frac{\phi(x) }{|\phi(x)| } \cdot \frac{\phi(y)}{ |\phi(y)|}~\bigg|~\Omega_{D_n}^I \bigg] \geq 1 - \varepsilon
    \end{align}
    for all $n$ sufficiently large.
\end{theorem}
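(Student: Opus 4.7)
The plan is to adapt the proof of Theorem \ref{thm:repulsion_ball}, exploiting that two nearby points in $\tilde D_n$ can be forced to lie in a common mesoscopic box of a single grid, so that their spins can both be compared to the \emph{same} harmonic-extension vector $h_B$. Fix $\nu \in (0,1)$ and $\varepsilon > 0$, and choose $\beta, \eta > 0$ sufficiently small and $\alpha \in \bigl(\nu \vee \alpha_0(\beta),\, 1\bigr)$ sufficiently close to one, with $\alpha_0(\beta)$ as in \eqref{eq:alpha_choice}. Working throughout with the single grid $\Pi_\alpha(0)$ (and writing $B(x)$ for its box containing $x$), the first step is to define the deterministic \emph{buffer-free} subset
\[
\tilde D_n^{\mathrm{geom}} \coloneqq \bigl\{x \in D_n : \mathrm{dist}(x, \primorial_\alpha(0)) \geq n^\nu + 1\bigr\}.
\]
Since $\nu < \alpha$, its complement in $D_n$ is contained in a union of strips of width $O(n^\nu)$ along grid lines of total length $O(n^{2-\alpha})$ and thus has cardinality $O(n^{2+\nu-\alpha}) = o(n^2)$. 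Crucially, for any $x, y \in \tilde D_n^{\mathrm{geom}}$ with $|x-y| \leq n^\nu$, the straight segment from $x$ to $y$ cannot cross a grid line, so $x$ and $y$ lie in the same box $B = B(x) = B(y)$.

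Next, for $x \in \tilde D_n^{\mathrm{geom}}$ introduce the random good event
\[
\mathcal{G}(x) \coloneqq \bigl\{|h_{B(x)}| \geq (2-\beta)\log n\bigr\} \cap \bigl\{|h^{\primorial_\alpha}(x) - h_{B(x)}| \leq 1\bigr\} \cap \bigl\{|\phi^{B(x)}(x)| \leq \eta \log n\bigr\}.
\]
On $\mathcal{G}(x) \cap \mathcal{G}(y)$ with $x, y$ in the common box $B$, both $\phi(x) = h_B + (h^{\primorial_\alpha}(x) - h_B) + \phi^B(x)$ and $\phi(y)$ have norm at least $(2 - \beta - \eta)\log n - 1$ and direction within angle $O\bigl(\eta/(2-\beta)\bigr)$ of $h_B/|h_B|$, so $\tfrac{\phi(x)}{|\phi(x)|} \cdot \tfrac{\phi(y)}{|\phi(y)|} \geq 1 - C\eta^2$ for a constant $C = C(\beta)$, which is $\geq 1 - \varepsilon/2$ once $\eta$ is taken small. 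The set $\tilde D_n$ is then defined as $\{x \in \tilde D_n^{\mathrm{geom}} : \P_n[\mathcal{G}(x) \mid \Omega^I_{D_n}] \geq 1 - \varepsilon/4\}$; union-bounding $\mathcal{G}(x)^c$ and $\mathcal{G}(y)^c$ under $\P_n[\,\cdot\mid \Omega^I_{D_n}]$ and combining with the trivial bound $\tfrac{\phi(x)}{|\phi(x)|}\cdot\tfrac{\phi(y)}{|\phi(y)|} \geq -1$ off that event immediately yields \eqref{eq:freezing_pairs}.

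The remaining task is to verify $|\tilde D_n| \geq |D_n|(1-o(1))$, which by Markov's inequality reduces to showing $\E_n\bigl[\#\{x : \mathcal{G}(x)^c\} \bigm| \Omega^I_{D_n}\bigr] = o(n^2)$, and the three clauses of $\mathcal{G}$ are treated separately. The first is controlled by Proposition \ref{prop:few_low_boxes}: on $\Omega^I_{D_n}$ with overwhelming conditional probability, the number of boxes $B \in \Pi_\alpha(0)$ with $|h_B| < (2-\beta)\log n$ is $o(|\Pi_\alpha(0)|)$, accounting for only $o(n^2)$ vertices. The second clause uses \eqref{eq:harmdiff_bound} to get $\Var_n[h^{\primorial_\alpha}(x) - h_{B(x)}] = O(n^{2(\nu-\alpha)}) = o(1)$, whence $\P_n[|h^{\primorial_\alpha}(x) - h_{B(x)}| > 1] \leq \exp(-c\, n^{2(\alpha-\nu)})$; this is super-polynomially smaller than the Corollary \ref{cor:halfline_implications} lower bound $\P_n[\Omega^I_{D_n}] \geq e^{-C\log^2 n}$, so even a brute union bound over the $\leq n^2$ candidate vertices leaves a vanishing conditional failure probability. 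For the third clause, the strategy is to apply Proposition \ref{prop:few_high_fluct_boxes} with each singleton probe $T = \{t\}$ for $t \in \Lambda_{n^\alpha/2} \cap \Z^2$: this bounds the number of $t$-bad boxes by $n^{-\eta^3}|\Pi_\alpha(0)|$ on $\Omega^I_{D_n}$ up to probability $e^{-n^c}$, and a polynomial union bound over the $O(n^{2\alpha})$ admissible offsets (easily absorbed by the super-exponential decay) shows that with conditional probability tending to one the total number of vertices $x$ failing the third clause is at most $n^{2\alpha} \cdot n^{-\eta^3}|\Pi_\alpha(0)| = n^{2-\eta^3} = o(n^2)$.

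The main technical obstacle is precisely this last step: Proposition \ref{prop:few_high_fluct_boxes} permits only probe sets $T$ of sub-polynomial size $n^{\eta^2/(8\alpha N)}$, far too small to directly cover all positions within a mesoscopic box, so one cannot simply invoke it with $T$ equal to all positions of interest at once. The resolution is to exploit the super-exponential $e^{-n^c}$ tail of that proposition, which affords a polynomial union bound over each individual offset and so ultimately yields uniform control of $|\phi^{B(x)}(x)|$ across the full position of $x$ within its box.
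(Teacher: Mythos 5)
There is a genuine gap in the second clause of your good event $\mathcal{G}(x)$. The estimate \eqref{eq:harmdiff_bound} bounds $\Var_n[h^{\primorial_\alpha}(x)-h_{B}]$ by $\tilde c\,r^2$ only for $|x-x_B|\leq r n^{\alpha}$ (and only for $r<1/4$); your buffer-free set $\tilde D_n^{\mathrm{geom}}$ keeps $x$ at distance at least $n^{\nu}$ from the grid lines, but it does \emph{not} keep $x$ within distance $n^{\nu}$ (or even $o(n^\alpha)$) of the center of its box. For a typical $x$ deep inside $B$ but at distance of order $n^{\alpha}$ from $x_B$ (say at distance $n^{\nu}$ from $\partial B$), a direct Green's function computation gives $\Var_n\big[h^{\primorial_\alpha}(x)-h_B\big]$ of order $(\alpha-\nu)\log n$, not $O(n^{2(\nu-\alpha)})$. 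Consequently your claimed tail bound $\P_n[|h^{\primorial_\alpha}(x)-h_{B(x)}|>1]\leq \exp(-c\,n^{2(\alpha-\nu)})$ fails badly: this event in fact has probability close to one for such $x$, and even relaxing the threshold to $\eta\log n$ only yields a polynomially small unconditional probability, which cannot beat the conditioning cost $\P_n[\Omega^I_{D_n}]\geq e^{-C\log^2 n}$ from \eqref{eq:lowerbound_condevent} via a brute union bound. So the step tying both spins to the \emph{same} vector $h_B$ of a single fixed grid collapses for pairs that happen to sit far from their box center, and this is not a minor technicality: it is exactly the difficulty the construction must address.

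The paper's proof resolves this by not working with a single grid: it introduces the family of $n^{\nu}/2$-shifted grids indexed by the reference set $G=\tfrac12 n^{\nu}\Z^2\cap\Lambda_{n^{\alpha}}$, so that any pair $x,y$ with $|x-y|\leq n^{\nu}$ lies within $O(n^{\nu})$ of the center $x_0=x_B$ of a box in one of the shifted grids $\Pi_\alpha(x_0)$. Then \eqref{eq:harmdiff_bound} applies with $r=O(n^{\nu-\alpha})$, giving variance $o(1/\log^2 n)$ for $h^{\primorial_\alpha(x_0)}(z)-h_B$, and Propositions \ref{prop:few_low_boxes} and \ref{prop:few_high_fluct_boxes} (with probe sets $\{0,w\}$, $w\in\Lambda_{4n^{\nu}}$, summed over the polynomially many shifts) control the good-site count; the comparison of $\sigma(x)$ and $\sigma(y)$ is routed through $\sigma(x_0)$ via Cauchy--Schwarz. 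Your treatment of the first and third clauses (Proposition \ref{prop:few_low_boxes}, and singleton probes in Proposition \ref{prop:few_high_fluct_boxes} with a polynomial union bound over offsets absorbed by the $e^{-n^c}$ tail) is sound, but to close the argument you would need to replace the single-grid buffer construction by shifted grids (or otherwise control the field relative to a nearby center), which is essentially the paper's route.
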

\begin{remark}
    Unfortunately, we are unable to prove this theorem at the macroscopic scale, although we still expect the statement to hold in that setting. The main difficulty lies in showing that the harmonic extension of the conditioned field inside a set of macroscopic size has a norm of order at least $\varepsilon \log n$. Our current argument does not apply in this case, since it relies on the fact that we can partition the space into a number of connected components that diverges as $n \to \infty$.
\end{remark}

The idea behind the proof is to consider $n^\nu$-shifts of the $\alpha$-grid $\Pi_\alpha(0)$ for $\alpha > \nu$ sufficiently close to $1$. This allows us to define a random set of good sites, characterized so that, for each of them, the spin at the site is nearly aligned with the spin at the nearest center $x_B$ (within distance less than $n^\nu$) of some $\alpha$-box $B$ in one of the shifted grids. The passage to a deterministic set of good sites is then carried out analogously to the proof of entropic repulsion. 
\begin{proof}
    Let $\beta \in (0, 1/2)$ be small (depending on $\varepsilon$), $\alpha \in (\nu,1)$, and $\eta \in (0, \beta]$—all to be specified later. First, consider the ``reference" set of centerings (where $n^\nu/2$ denotes the integer part of $n^\nu/2$)
    \begin{align*}
        G \coloneqq \frac{1}{2} n^\nu \Z^2 \cap \Lambda_{n^\alpha}, 
    \end{align*}
    and define the random set of good sites in $D_n$ by $\G_{D_n}^\alpha(\beta, \eta) = \{w \in D_n: A_w\}$ with the event
    \begin{align*}
         A_w = \bigg\{\frac{|h_B|}{\log n} \geq 2 - \beta\, \text{ and } \,\frac{|\phi^B({x_B})| \vee |\phi^B({w})|}{\log n} \leq \eta, \, \forall B \in \bigcup_{x_0 \in G} \Pi_\alpha(x_0) \text{ with } w-x_B \in \Lambda_{4n^\nu} \bigg\}.
    \end{align*}
    Note that 
    \begin{align*}
        |\G_{D_n}^\alpha(\beta, \eta)| \geq |D_n| - \sum_{x_0 \in G} \sum_{w \in \Lambda_{4n^\nu}} \big( N^\alpha(\beta, x_0) +  M_{\{0, w\}}^\alpha(\eta, x_0)\big).
    \end{align*}
    Therefore, by \eqref{eq:lowerbound_condevent} and Propositions \ref{prop:few_low_boxes} and \ref{prop:few_high_fluct_boxes}, for $\alpha \in (\nu,1)$ sufficiently close to $1$ (depending on $\beta$), $\eta >0$ sufficiently close to $0$ (depending on $\alpha$)—for example, choose $\alpha \coloneqq \alpha_0(\beta) \vee \frac{1+\nu}{2}$ and then $\eta \coloneqq \eta_0(\alpha, N) \wedge \beta \wedge (1-\alpha)^{1/3}$—for all $n$ sufficiently large
    \begin{align*}
        \sup_{x_0 \in G, w \in \Lambda_{n^\nu}} \E_n &\big[N^\alpha(\beta, x_0) +  M_{\{0, w\}}^\alpha(\eta, x_0) ~\big|~ \Omega^I_{D_n}\big] - 2n^{2(1-\alpha) - \eta^3} \\
        &\leq  n^{2(1-\alpha)} \sup_{x_0 \in G, w \in \Lambda_{n^\nu}}\P_n \big[N^\alpha(\beta, x_0) \vee M_{\{0, w\}}^\alpha(\eta, x_0) \geq n^{2(1-\alpha) - \eta^3} ~\big|~ \Omega^I_{D_n}\big]\\
        &\leq 2n^{2(1-\alpha)} e^{-\frac{1}{2} \log^2 n}.
    \end{align*}
    This implies that 
    \begin{align*}
        \E_n \big[\big|\G_{D_n}^\alpha(\beta, \eta)|~\big|~ \Omega^I_{D_n}\big] \geq |D_n| - 128(1+o(1)) n^{2(\alpha - \nu)} n^{2\nu} n^{2(1-\alpha) - \eta^3} = |D_n| \big(1-\mathcal{O}(n^{-\eta^3})\big),
    \end{align*}
    so fully analogously to the proof of Theorem \ref{thm:repulsion_ball}, we obtain that the set of sites
    \begin{align*}
        \tilde D_n \coloneqq \Big\{w \in D_n: \P_n\big[A_w ~\big|~\Omega^I_{D_n}\big] \geq 1 - \frac{\varepsilon^2}{32}\Big\}
    \end{align*}
    contains at least $|D_n|(1- n^{-\eta^3/2})$ vertices for all $n$ sufficiently large.

    To complete the proof, it remains to show that for any $x, y \in \tilde D_n$ with $|x-y| < n^\nu$, \eqref{eq:freezing_pairs} holds.
    To this end, write $\sigma(x) = \phi(x)/|\phi(x)|$ for brevity and let $x_0 \in G + n^\alpha \Z^2$ be the nearest site to $x$ (choose one arbitrarily if there are multiple (at most four) such vertices)—in particular, $x, y \in x_0 + \Lambda_{4n^\nu}$. Then, by Cauchy-Schwarz inequality,
    \begin{align*}
         \E_n\big[ \sigma(x) \cdot \sigma(y)~\big|~\Omega_{D_n}^I \big] &\geq 1 - \sum_{z=x, y} \sqrt{\E_n\big[ |\sigma(z) - \sigma(x_0)|^2~\big|~\Omega_{D_n}^I \big]} \\
         &= 1- \sum_{z=x, y} \sqrt{2 - 2\E_n\big[ \sigma(z) \cdot \sigma(x_0)~\big|~\Omega_{D_n}^I \big]} \geq 1 - \varepsilon
    \end{align*}
    as desired if $\E_n[ \sigma(z) \cdot \sigma(x_0)~|~\Omega_{D_n}^I ] \geq 1 - \varepsilon^2/8$ for $z = x, y$. To prove the latter, let $B$ denote the $\alpha$-box centered at $x_0$, i.e., $B \in \Pi_\alpha(x_0)$ with $x_B = x_0$, and recall that by \eqref{eq:harmdiff_bound}, for an appropriate absolute constant $c>0$ and all $z \in D_n$ with $|z-x_B| \leq 4 n^{\nu}$, $$\Var_n[(h^{\primorial_\alpha(x_0)}(z) - h_B)^1] \leq c n^{2(\nu-\alpha)} = o\Big( \frac{1}{\log^2 n}\Big).$$ Combined with \eqref{eq:lowerbound_condevent}, this yields that the event
    \begin{align*}
        F \coloneqq \Big\{|h^{\primorial_\alpha(x_0)}(z) - h_B| \geq 1,\, \forall z \in \{x, y\} \Big\}
    \end{align*}
    has vanishing (in the limit $n\to \infty$) probability under $\P_n[\cdot|\Omega^I_{D_n}]$—see proof of Proposition \ref{prop:few_low_boxes} for a similar argument.
    Furthermore, notice that on the event $A_z \cap F^c$,
    \begin{align*}
        \sigma(z) \cdot \sigma(x_0) \geq  \frac{(2 - \beta)^2}{(2 - \beta + \eta )^2}- \frac{\eta^2 + 2\eta(2  - \beta)}{(2 - \beta - \eta)^2} - o(1)
    \end{align*}
    Recalling that $\eta \leq \beta$, and choosing $\beta>0$ sufficiently small so that the right hand-side of the last expression is greater than or equal to $1- \varepsilon^2/32$, we may conclude. Indeed, since then, by definition of $\tilde D_n$,
    \begin{align*}
        \E_n\big[ \sigma(z) \cdot \sigma(x_0)~\big|~\Omega_{D_n}^I \big] &\geq 1- \frac{\varepsilon^2}{32} - 2 \Big(\P_n\big[ A_z^c~\big|~\Omega_{D_n}^I \big] + \P_n\big[ F~\big|~\Omega_{D_n}^I \big]\Big) \geq 1-\frac{\varepsilon^2}{8}
    \end{align*}
    for all $n$ sufficiently large.
\end{proof}

\subsection{One spin-dimensional case: Freezing at all scales}
\label{subsec:N=1freezing}
Throughout this subsection, we work with $N=1$. The main result we prove is the freezing of spins at all (including macroscopic) distances.
\begin{theorem}[Freezing of signs of the conditioned field]
\label{thm:N=1freezing}
    Let $I\subseteq \R$ be a proper interval. For any $\varepsilon > 0$, there exists $\tilde D_n \subset D_n$ such that $|D_n \setminus \tilde D_n| = o(n^2)$ and for all $x,y \in \tilde D_n$, 
    \begin{align*}
        \E_n\bigg[\frac{\phi(x)}{|\phi(x)|} \cdot \frac{\phi(y)}{|\phi(y)|} ~\bigg|~\Omega_{D_n}^I\bigg] \geq 1 - \varepsilon
    \end{align*}
    for all $n$ sufficiently large.
\end{theorem}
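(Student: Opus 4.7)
The plan follows the roadmap sketched in the introduction. Fix $\beta\in(0,1/2)$ small (to be tuned in terms of $\varepsilon$) and $\alpha\in(0,1)$ close enough to $1$ for Propositions \ref{prop:few_low_boxes} and \ref{prop:few_high_fluct_boxes} to apply with parameter $\beta$. The idea is to prove that within any mesoscopic grid $\Pi_\alpha(x_0)$ the harmonic extensions $\{h_B\}_{B\in\Pi_\alpha(x_0)}$ almost all share a common sign (the ``picked sign'' $\sigma_\alpha(x_0)\in\{+,-\}$); then pass to a deterministic set of good centers where $\phi(x_B)$ is close to $\sigma_\alpha(x_0)\cdot 2\log n$ under conditioning; and finally show that picked signs of slightly shifted grids coincide, yielding a globally consistent sign on $|D_n|(1-o(1))$ vertices.

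\textbf{Step 1 (Sign picking inside one grid).} For a small $\delta\in(0,\beta)$ and $u\in(0,2(1-\alpha))$, I would show
\begin{align*}
    \P_n\big[\Omega_{D_n}^I\cap\{N_+^\alpha(x_0)\wedge N_-^\alpha(x_0)\geq n^u\}\big]\leq e^{-C\log^2 n},
\end{align*}
where $N_\pm^\alpha(x_0)\coloneqq\#\{B\in\Pi_\alpha(x_0):\pm h_B>\delta\log n\}$. The argument is an interface/isoperimetric one on $\Pi_\alpha(x_0)$: coexistence of many $+$ and $-$ boxes forces many neighboring pairs $(B,B')$ of opposite signs, each contributing either a low box $\{|h_B|\wedge|h_{B'}|<\delta\log n\}$ (controlled by a variant of Proposition \ref{prop:few_low_boxes}) or a jump $|h_B-h_{B'}|>2\delta\log n$, which is a Gaussian event of bounded variance thanks to Lemma \ref{lemma:harmonic_cov_EVs}. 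Both alternatives are suppressed at cost $e^{-\Omega(\log^2 n)}$, beating \eqref{eq:lowerbound_condevent}. Thus one may unambiguously define $\sigma_\alpha(x_0)$ on an event of conditional probability $1-o(1)$. \textbf{Step 2 (Deterministic good centers).} Define
\begin{align*}
    \Cc_\alpha^{\sgn}(x_0,\beta,\varepsilon)\coloneqq\bigg\{x_B:B\in\Pi_\alpha(x_0),\,\P_n\bigg[\frac{|\phi(x_B)|}{\log n}\geq 2-\beta,\,\sgn(\phi(x_B))=\sigma_\alpha(x_0)\,\bigg|\,\Omega_{D_n}^I\bigg]\geq 1-\tfrac{\varepsilon}{4}\bigg\}.
\end{align*}
Mimicking the proof of Theorem \ref{thm:repulsion_ball} while incorporating Step 1, the set $U_n\coloneqq\bigcup_{x_0\in\Lambda_{n^\alpha}}\Cc_\alpha^{\sgn}(x_0,\beta,\varepsilon)$ has $|D_n|(1-o(1))$ elements, the union being disjoint because centers of $\Pi_\alpha(x_0)$ lie in the coset $x_0+n^\alpha\Z^2$.

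\textbf{Step 3 (Propagation of picked sign across shifted grids).} This is the main obstacle. For $K\geq 2$ large and $|x_0-x_0'|\leq n^\alpha/K$, one must prove $\P_n[\sigma_\alpha(x_0)\neq\sigma_\alpha(x_0')\mid\Omega_{D_n}^I]\to 0$. The intuition is that each $B\in\Pi_\alpha(x_0)$ pairs naturally with a $B'\in\Pi_\alpha(x_0')$ whose boundary lies within $n^\alpha/K$ of $\partial B$; by a variance computation in the spirit of \eqref{eq:harmdiff_bound}--\eqref{eq:cov_harmext} applied to the enlarged harmonic extension $h^{\primorial_\alpha(x_0)\cup\primorial_\alpha(x_0')}$, one expects a gain in $K$ for $\Var_n[h_B-h_{B'}]$ compared to the individual variances $\Theta(\log n)$. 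Opposite picked signs would then force a positive fraction of such pairs to satisfy $h_B\geq(2-\beta)\log n$ and $h_{B'}\leq -(2-\beta)\log n$ simultaneously, a joint Gaussian event suppressed at a rate beating $\P_n[\Omega_{D_n}^I]^{-1}$ from \eqref{eq:lowerbound_condevent} once $K$ is large. The delicate piece—and the reason this is the main obstacle—is deriving the joint law of $(h_B-h_{B'})_B$ (an analog of Lemma \ref{lemma:harmonic_cov_EVs} spanning two shifted grids) sharply enough for the per-pair Gaussian gain to survive the union bound.

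\textbf{Step 4 (Conclusion).} Set $\tilde D_n\coloneqq U_n$. Given $x,y\in\tilde D_n$, pick $x_0,y_0\in\Lambda_{n^\alpha}$ with $x\in\Cc_\alpha^{\sgn}(x_0,\beta,\varepsilon)$ and $y\in\Cc_\alpha^{\sgn}(y_0,\beta,\varepsilon)$, and connect $x_0,y_0$ inside $\Lambda_{n^\alpha}$ by a chain of at most $2K$ shifts of step $\leq n^\alpha/K$. Iterating Step 3 yields $\P_n[\sigma_\alpha(x_0)=\sigma_\alpha(y_0)\mid\Omega_{D_n}^I]\geq 1-\varepsilon/4$ for $n$ large; combined with the two $1-\varepsilon/4$ lower bounds from membership in $\Cc_\alpha^{\sgn}$ we conclude $\P_n[\sgn(\phi(x))=\sgn(\phi(y))\mid\Omega_{D_n}^I]\geq 1-3\varepsilon/4$. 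Since $\phi/|\phi|=\sgn(\phi)\in\{\pm 1\}$ almost surely, this translates into
\begin{align*}
    \E_n\bigg[\frac{\phi(x)}{|\phi(x)|}\cdot\frac{\phi(y)}{|\phi(y)|}\,\bigg|\,\Omega_{D_n}^I\bigg]\geq 1-\tfrac{3\varepsilon}{2},
\end{align*}
and replacing $\varepsilon$ by $\varepsilon/2$ throughout the argument yields the stated bound.
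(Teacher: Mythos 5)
Your overall architecture (sign picking inside a grid, passage to a deterministic set of good centers, propagation of the sign across shifted grids, chaining) matches the paper's, but two of the steps contain genuine gaps, and in both cases the paper's proof does substantive work that your sketch replaces with an unjustified suppression claim. In Step 1, the jump alternative is not disposed of by observing that $|h_B-h_{B'}|>2\delta\log n$ is ``a Gaussian event of bounded variance'' suppressed at cost $e^{-\Omega(\log^2 n)}$: the constant in that exponent is of order $\delta^2/\tilde c$ (with $\tilde c$ from Lemma \ref{lemma:harmonic_cov_EVs}) and bears no relation to $\tfrac{2}{\pi}\cpc_\Lambda(D)$, so for your small $\delta<\beta$ (and even for $\delta$ of order one) a single-jump tail bound does \emph{not} beat \eqref{eq:lowerbound_condevent}, and a union bound over jumps fails. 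One must instead use that coexistence forces \emph{many} jumps along the interface; since these are strongly correlated Gaussians, the paper (Proposition \ref{prop:pick_sign}) decomposes the difference process $Z$ as a white noise $W$ plus an independent correction $\check Z$ (this is exactly what Lemma \ref{lemma:harmonic_cov_EVs} is for), applies Bernstein to the many exceedances of $W$ (cost $e^{-cn^{u/2}}$), and uses that the event $\{\sup|\check Z|\text{ large}\}$ is \emph{independent of} $\Omega^I_{D_n}$, so its conditional probability needs no comparison with the conditioning cost at all. None of this is visible in your sketch, and without it the step does not close.

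In Step 3, which you yourself flag as the main obstacle, your route requires a variance bound for $h_B-h_{B'}$ where the two harmonic extensions are taken with respect to \emph{different} grids; no such cross-grid estimate is available from \eqref{eq:cov_harmext}, \eqref{eq:harmdiff_bound} or Lemma \ref{lemma:harmonic_cov_EVs}, and you leave it unproven. The paper sidesteps it entirely (Claim \ref{claim:grid_sign_indep}): the grid sign is defined by popular vote of the \emph{field values} $\phi(x_B)$, and on the good event furnished by Propositions \ref{prop:few_low_boxes}, \ref{prop:few_high_fluct_boxes} with $T=\{0,x_0\}$ (this is precisely why that proposition allows a general small set $T$) and \ref{prop:pick_sign}, a sign mismatch between the grids at $0$ and at $x_0\in\Lambda_{n^\alpha/K}$ forces, for at least one box, $|h_B-h^{\primorial_\alpha(0)}(x_B+x_0)|\geq (4-2\beta-2\eta)\log n$ — a difference of the \emph{same} harmonic extension at two points within distance $n^\alpha/K$, whose variance is $O(1/K^2)$ by \eqref{eq:harmdiff_bound}; a single such box suffices, so a union bound at cost $e^{-cK^2\log^2 n}$ beats \eqref{eq:lowerbound_condevent} for $K$ large, and the general shift is handled by chaining through a reference grid, as you also propose. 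Your worry about joint laws surviving a union bound is a red herring (one bad pair is enough), but the missing per-pair estimate across grids is a real gap; if you want to salvage your variant, the cleanest fix is to adopt the paper's device of comparing shifted centers inside one grid via the $T=\{0,x_0\}$ fluctuation control rather than comparing two harmonic extensions. Minor points: define the picked sign by popular vote so it is a genuine random variable rather than only defined on a high-probability event, and note that the case of a half-line $I$ should be handled separately (it follows directly from \eqref{eq:1sD_norm}).
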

First note that if $I= (-\infty, b)$ or $(b,\infty)$ for $b\in \R$ the statement of the theorem with $\tilde D_n= D_n$ is an immediate consequence of \eqref{eq:1sD_norm}. So, assume from now on that $I = (a,b)$ for some $a<b \in \R$. The two main results that constitute the proof are the entropic repulsion of the norm as in Section \ref{sec:repulsion} and Proposition \ref{prop:pick_sign} just below that informally states that the centers of the field pick a sign under the conditioning. To be more precise, under $\Omega^I_{D_n}$, the sign of $h_B$ remains constant in almost all boxes of $\Pi_\alpha$ (for any fixed $\alpha \in (0,1)$ and all $x_0 \in \Z^2$). 
\begin{proposition}[Centers pick a sign]
\label{prop:pick_sign} 
    There exists an absolute constant $c_0 > 0$ such that for $\alpha \in (0,1)$, $x_0 \in \Z^2$, and $u \in (0, 2(1-\alpha))$,
    \begin{align*}
        \P_n \big[N^\alpha_+(x_0) \wedge N^\alpha_-(x_0) \geq n^u ~\big|~\Omega^I_{D_n}\big] \leq e^{-c_0 \alpha^2 \log^2 n}
    \end{align*}
    for all $n$ sufficiently large. Here,
    \begin{align*}
        N^\alpha_\pm(x_0) \coloneqq \# \big\{B \in \Pi_\alpha(x_0):  \pm h_B > 0 \big\}.
    \end{align*}
\end{proposition}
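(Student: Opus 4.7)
The strategy is to argue that on the event $\{N_+^\alpha(x_0)\wedge N_-^\alpha(x_0)\ge n^u\}$ the sign pattern $(\sgn(h_B))_{B\in\Pi_\alpha(x_0)}$ forces either many ``low'' boxes (boxes where $|h_B|$ is only a small fraction of $\log n$) or many neighbouring pairs of boxes where $|h_B-h_{B'}|$ is of order $\log n$. Both are exponentially unlikely, and dividing by the probability of $\Omega^I_{D_n}$ will then give the stated bound. In more detail, identify $\Pi_\alpha(x_0)$ with a subset of $\Z^2$ of cardinality $\asymp n^{2(1-\alpha)}$ via $B\mapsto(x_B-\hat x_0)/n^\alpha$; because $\partial D$ is smooth, this is, up to boundary effects, an approximately rectangular region of side $\asymp n^{1-\alpha}$. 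The sets $S_\pm:=\{B:\pm h_B>0\}$ partition $\Pi_\alpha(x_0)$ almost surely, each of cardinality $\ge n^u$ on the event in question, so the discrete planar isoperimetric inequality (noting $u<2(1-\alpha)$, so the bound $\sqrt{|S|}$ is binding rather than the grid side) produces at least $c_D\, n^{u/2}$ edges of $\Pi_\alpha(x_0)$ joining $S_+$ to $S_-$. Fix $\delta:=\alpha/4$. For each such interface edge $\{B,B'\}$ with $h_B>0>h_{B'}$, either one endpoint satisfies $|h_B|\wedge|h_{B'}|<\delta\log n$ (a ``low'' box in the sense of Proposition~\ref{prop:few_low_boxes} with $\beta=2-\delta$), or $|h_B-h_{B'}|\ge 2\delta\log n$. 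Since each box has degree at most $4$, the event splits into
\begin{align*}
M:=N^\alpha(2-\delta,x_0)\ge c_D n^{u/2}/8
\quad\text{or}\quad
J:=\#\{(B,p): B,B+n^\alpha p\in\Pi_\alpha(x_0),\ |h_{B+n^\alpha p}-h_B|\ge 2\delta\log n\}\ge c_D n^{u/2}/2.
\end{align*}

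For the first alternative, note that $\alpha_0(2-\delta)=\delta/(1+\delta/2)<\alpha$ for our choice $\delta=\alpha/4$, so Proposition~\ref{prop:few_low_boxes} (applied with $u'=u/2-\varepsilon$ for arbitrarily small $\varepsilon>0$, which lies in $(0,2(1-\alpha))$) yields
\begin{align*}
\P_n[\Omega^I_{D_n}\cap\{M\ge c_D n^{u/2}/8\}]\le e^{-(\frac{2}{\pi}\cpc_\Lambda(D)+1)\log^2 n}
\end{align*}
for all $n$ large. For the second alternative, set $Z_{(B,p)}:=h_{B+n^\alpha p}-h_B$ and invoke Lemma~\ref{lemma:harmonic_cov_EVs}: the covariance $\Sigma$ of $(Z_{(B,p)})$ has maximal eigenvalue bounded by an absolute constant $\tilde c$. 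If $J\ge k$, then for some set $E$ of $k$ jump-edges and some $s\in\{\pm1\}^E$ we have $\sum_{e\in E}s_e Z_e\ge 2\delta k\log n$; this is a centred Gaussian of variance $\le\tilde c k$ (by the eigenvalue bound and $\|s\|^2=k$). Taking a union bound over the $\binom{m}{k}2^k\le(2em/k)^k$ pairs $(E,s)$, with $m=O(n^{2(1-\alpha)})$,
\begin{align*}
\P_n[J\ge k]\le\exp\!\bigl(k\,[\log(2em/k)-2\delta^2\tilde c^{-1}\log^2 n]\bigr)\le\exp(-n^{u/2})
\end{align*}
for $k\ge c_D n^{u/2}/2$ and $n$ large, since the $\log^2 n$ term dominates.

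Combining the two cases and dividing by $\P_n[\Omega^I_{D_n}]\ge e^{-(\frac{2}{\pi}\cpc_\Lambda(D)+o(1))\log^2 n}$ from Corollary~\ref{cor:halfline_implications} yields
\begin{align*}
\P_n[N^\alpha_+(x_0)\wedge N^\alpha_-(x_0)\ge n^u\mid\Omega^I_{D_n}]\le e^{-(1-o(1))\log^2 n}+e^{-n^{u/2}},
\end{align*}
which is at most $e^{-c_0\alpha^2\log^2 n}$ for any absolute $c_0<1$ (since $\alpha^2<1$) and all sufficiently large $n$. The main technical obstacle is the jump-case tail estimate: the variables $Z_e$ are strongly correlated, so naive independence fails; Lemma~\ref{lemma:harmonic_cov_EVs}, together with the union-bound-over-signs trick that converts an eigenvalue bound into a bound on the number of large coordinates, is essential. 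The isoperimetric step is mostly routine given the smoothness of $\partial D$, and the low-box case reduces directly to Proposition~\ref{prop:few_low_boxes} after exploiting that $\alpha_0(\beta)\to 0$ as $\beta\to 2$.
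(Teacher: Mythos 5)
Your proposal is correct, and its skeleton coincides with the paper's: reduce by symmetry, use a relative isoperimetric inequality on the box-adjacency graph of $\Pi_\alpha(x_0)$ to extract $\gtrsim n^{u/2}$ interface edges, split into the alternative ``many low boxes'' versus ``many jumps of size $\asymp \delta\log n$'', kill the first alternative with Proposition~\ref{prop:few_low_boxes} (choosing $\delta$ proportional to $\alpha$ so that $\alpha\ge\alpha_0(2-\delta)$), and finally divide by $\P_n[\Omega^I_{D_n}]$ via \eqref{eq:lowerbound_condevent}. Where you genuinely diverge is the jump alternative. The paper uses Lemma~\ref{lemma:harmonic_cov_EVs} to write $Z=W+\check Z$ with $W$ a white noise and $\check Z$ an independent Gaussian field (covariance $2\tilde c\,\mathrm{Id}-\Sigma$), then bounds $\sup|\check Z|$ by a union bound (this term, being independent of $\phi$, factors off $\P_n[\Omega^I_{D_n}]$ and is the source of the $e^{-c_0\alpha^2\log^2 n}$ in the statement) and controls the count of large $|W|$'s by Bernstein. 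You instead bound the event $\{J\ge k\}$ directly by a union bound over $k$-subsets of edges and sign patterns, using the operator-norm bound $\lambda_{\max}(\Sigma)\le\tilde c$ from the same Lemma~\ref{lemma:harmonic_cov_EVs} to control $\mathrm{Var}\big(\sum_{e\in E}s_eZ_e\big)\le\tilde c k$; this yields the unconditional bound $e^{-c\,\delta^2 k\log^2 n}$ with $k\asymp n^{u/2}$, which is so small that no independence trick is needed before dividing by $\P_n[\Omega^I_{D_n}]$. Your route is somewhat more elementary (no auxiliary field, no Bernstein) and in fact gives a bound $e^{-(1-o(1))\log^2 n}$ with no $\alpha^2$ loss, which implies the stated estimate since $\alpha<1$. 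Two cosmetic points: in your final display the term $e^{-n^{u/2}}$ should also be divided by $\P_n[\Omega^I_{D_n}]$ (harmless, since $n^{u/2}\gg\log^2 n$), and the identification of $\Pi_\alpha(x_0)$ with a discrete region supporting a relative isoperimetric inequality deserves the same one-line justification (smooth, fixed $D$) that the paper itself only sketches.
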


We defer the proof of Proposition \ref{prop:pick_sign} to Subsubsection \ref{subsubsec:pick_sign} and now proceed with the proof of Theorem \ref{thm:N=1freezing}, assuming the validity of the proposition. We begin by introducing notation analogous to $N^\alpha$ and $M^\alpha$—which involve $h^{\primorial_\alpha}$ and $\phi^B$, respectively—this time referring directly to the field $\phi$:
\begin{align*}
    L^\alpha_{\pm}(x_0) &\coloneqq \#\big \{B\in \Pi_\alpha(x_0): \pm \phi({x_B})>0 \big \},\\
    L^\alpha(\beta, x_0) &\coloneqq \#\big \{B\in \Pi_\alpha(x_0): |\phi({x_B})|< (2 - \beta)\log n \big \}.
\end{align*}

\begin{proof}[Proof of Theorem \ref{thm:N=1freezing}]
Observe that for all $\beta \in (0,1)$, $\alpha \in (0,1)$, $0<\eta \leq \beta/2$, and $x_0 \in \Z^2$,
    \begin{align*}
        L^\alpha(\beta, x_0) &\leq N^\alpha(\beta/2, x_0) + M^\alpha(\eta, x_0);\\
        L^\alpha_{+}(x_0) \wedge L^\alpha_-(x_0) &\leq N^\alpha(\beta, x_0) + M^\alpha(\eta, x_0) + N^\alpha_+(x_0) \wedge N^\alpha_-(x_0). 
    \end{align*}
    So, let $\beta \in (0, 1)$ be arbitrary but fixed. Then, by \eqref{eq:lowerbound_condevent} together with Propositions \ref{prop:few_low_boxes}, \ref{prop:few_high_fluct_boxes}, and \ref{prop:pick_sign} we get that for all $\alpha$ close enough to $1$ and $\eta>0$ small enough, there exists $c= c(\beta, \alpha, \eta) > 0$ such that 
    \begin{align}
    \label{eq:bad_centers_negligible}
        \P_n\big[ L^\alpha(\beta, x_0), \, L^\alpha_{+}(x_0) \wedge L^\alpha_{-}(x_0) \leq n^{2(1-\alpha) - \eta^3} ~\big|~\Omega^I_{D_n}\big] \geq 1 - e^{- c\log^2 n}
    \end{align}
    for all $x_0 \in \Z^2$ and for all $n$ sufficiently large. To reduce the number of parameters, we set, for example, $\alpha \coloneqq \alpha_0(\beta/2) \vee \frac{1}{2}$ (cf. \eqref{eq:alpha_choice}) and $\eta \coloneqq \frac{\beta}{2} \wedge \eta_0(\alpha, 1) \wedge (1-\alpha)^{1/3}$. In this case, $c$ can be set to $\frac{c_0}{8} \wedge\frac{1}{4}$ with $c_0>0$ as in Proposition \ref{prop:pick_sign}. 

    For each $x_0 \in \Z^2$, we now define the sign associated with the corresponding grid $\primorial_\alpha(x_0)$ by the \textit{popular vote} in the following sense
    \begin{align*}
        \sign(x_0) \coloneqq \sgn\left( L^\alpha_{+}(x_0) - L^\alpha_{-}(x_0) \right).
    \end{align*}
    Note that $\sign(x_B) = \sign(x_0)$ for all $B \in \Pi_\alpha(x_0)$. We also introduce the associated random set of \emph{good signed} boxes—boxes whose centers \textit{voted} for the sign of the grid and where the norm of the field is large:
    \begin{align*}
        \G^\alpha_{\sign}(x_0, \beta)\coloneqq \big\{x_B: B\in \Pi_\alpha(x_0), \,|\phi({x_B)}| \geq (2 - \beta) \log n,\, \sgn(\phi({x_B})) = \sign(x_0) \big\}.
    \end{align*}
    We are interested in having a \textit{landslide victory}. To this end, note that 
    \begin{align*}
        |\G_{\sign}^\alpha(x_0, \beta)| \geq |\Pi_\alpha(x_0)| - L^\alpha(\beta, x_0) - L_{+}^\alpha(x_0) \wedge L^{\alpha}_-(x_0)
    \end{align*}
    Then, in light of \eqref{eq:bad_centers_negligible}, fully analogously to the proof of Theorem \ref{thm:repulsion_ball}, we indeed see that with high probability the election for each grid is won by a landslide, and furthermore, we obtain\footnote{Using auxiliary disjoint sets $\Cc_{\sign}^\alpha(x_0, \beta, \varepsilon)$ for $x_0 \in \Lambda_{n^\alpha}$ (up to the inner boundary) given by $\{x_B: B \in \Pi_\alpha(x_0), \, \P_n\big[ x_B \in \G^\alpha_{\sign}(x_0, \beta)~\big|~\Omega^I_{D_n}\big] \geq 1 - \frac{\varepsilon}{8}\}$} that for all $n$ sufficiently large, the deterministic set of good signed boxes defined as
    \begin{align*}
        \tilde D_n \coloneqq \Big\{z \in D_n: \, \P_n\big[z \in \G^\alpha_{\sign}(z, \beta) ~\big|~\Omega^I_{D_n}\big] \geq 1 - \frac{\varepsilon}{8}\Big\}
    \end{align*}
    is of cardinality $|D_n| (1-o(1))$.

    To complete the proof of the theorem, it suffices to show that with high probability, the random variable $\sign(x_0)$ does not depend on $x_0 \in \Lambda_{n^\alpha}$:
    \begin{claim} \label{claim:grid_sign_indep}
        There exists an absolute constant $\tilde c>0$ such that for all $x_0\in D_n$, 
        \begin{align*}
            \P_n\big[\sign(x_0) \neq \sign(0) ~\big|~\Omega_{D_n}^I\big] \leq e^{-\tilde c \log^2 n}
        \end{align*}
        as long as $n$ is large enough. 
    \end{claim}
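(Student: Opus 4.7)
The plan is to exploit that $\Pi_\alpha(\cdot)$ is almost $n^\alpha \Z^2$-periodic in its argument, reducing via a chain argument to the single-step case $\sign(y) \neq \sign(y+e)$ for a unit vector $e$, and then to apply a Gaussian large-deviation bound to the field increments across the unit shift.

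First I would observe that, up to an $O(n^{1-\alpha})$ boundary correction---negligible compared to $|\Pi_\alpha|$ on the landslide event of \eqref{eq:bad_centers_negligible}---the grid $\Pi_\alpha(x_0)$ depends on $x_0$ only modulo $n^\alpha \Z^2$, so we may restrict to $|x_0| \leq \sqrt{2}\, n^\alpha$. Connecting $0$ to $x_0$ by a lattice path $0 = y_0, \ldots, y_K = x_0$ of unit steps, with $K \leq 2n^\alpha$, and noting that $K$ is polynomial in $n$, a union bound reduces the claim to showing, for each $y \in \Z^2$ and $e \in \{\pm e_1, \pm e_2\}$,
\[
    \P_n[\sign(y) \neq \sign(y+e) \mid \Omega^I_{D_n}] \leq e^{-c\log^2 n}.
\]
Without loss of generality, take $y=0$ and $e = e_1$.

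Let $F$ be the event that, for both grids $\Pi_\alpha(0)$ and $\Pi_\alpha(e_1)$ simultaneously, the total number of low or wrong-sign centers is at most $n^{2(1-\alpha) - \eta^3}$; by \eqref{eq:bad_centers_negligible} applied to $x_0=0$ and $x_0=e_1$, one has $\P_n[F^c \mid \Omega^I_{D_n}] \leq e^{-c\log^2 n}$. Pair each $B \in \Pi_\alpha(0)$ with the box in $\Pi_\alpha(e_1)$ centered at $x_B + e_1$. On $F \cap \{\sign(0) = -1, \sign(e_1) = +1\}$, at least $|\Pi_\alpha|/2$ pairs must satisfy $\phi(x_B + e_1) - \phi(x_B) \geq (4 - 2\beta)\log n$ (and analogously if the signs are reversed).

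To bound this under the unconditional measure, set $Y_B := \phi(x_B + e_1) - \phi(x_B)$. By \eqref{eq:Greensdiff_bounds} and Gershgorin (mirroring the proof of Lemma \ref{lemma:harmonic_cov_EVs}, but easier, since increments here are at unit scale), $\Var_n[Y_B] = \mathcal{O}(1)$ and $|\mathrm{Cov}(Y_B, Y_{B'})| = \mathcal{O}(|x_B - x_{B'}|^{-2})$ for $B \neq B'$, giving $\lambda_{\max}(\mathrm{Cov}(Y)) = \mathcal{O}(1)$ uniformly in $n$. For any $S \subset \Pi_\alpha(0)$ with $|S| \geq |\Pi_\alpha|/2$, Gaussian concentration yields
\[
    \P_n\bigg[\sum_{B \in S} Y_B \geq (4-2\beta)\log n \cdot |S|\bigg] \leq e^{-c'' \log^2 n \cdot |S|},
\]
and a union bound over the $\leq 2^{|\Pi_\alpha|}$ such subsets gives
\[
    \P_n\Big[\#\{B \in \Pi_\alpha(0): Y_B \geq (4-2\beta)\log n\} \geq |\Pi_\alpha|/2\Big] \leq e^{-c'''\log^2 n \cdot n^{2(1-\alpha)}}.
\]
Combined with the lower bound $\P_n[\Omega^I_{D_n}] \geq e^{-\frac{2}{\pi}\cpc_\Lambda(D)\log^2 n(1+o(1))}$ from \eqref{eq:lowerbound_condevent} via Bayes, and with the estimate $\P_n[F^c \mid \Omega^I_{D_n}] \leq e^{-c\log^2 n}$, we obtain $\P_n[\sign(0) \neq \sign(e_1) \mid \Omega^I_{D_n}] \leq e^{-c_0 \log^2 n}$ for an absolute $c_0 > 0$, which upon reinserting the chain completes the proof.

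The main obstacle will be the careful verification that $\lambda_{\max}(\mathrm{Cov}(Y))$ is bounded uniformly in $n$---a minor variant of Lemma \ref{lemma:harmonic_cov_EVs} but at unit scale rather than the mesoscopic scale $n^\alpha$---together with tracking constants along the chain so that the resulting $\tilde c$ is absolute and strictly positive. The enormous factor $n^{2(1-\alpha)}$ in the Gaussian LDP exponent leaves ample room to absorb both the capacity cost $(2/\pi)\cpc_\Lambda(D)$ coming from $\P_n[\Omega^I_{D_n}]$ and the polynomial loss from the union bound over the chain.
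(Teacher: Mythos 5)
Your proposal is correct, but it takes a genuinely different route from the paper's. The paper never descends to unit shifts: it compares, within the single decomposition $\Pi_\alpha(0)$, the harmonic extension at a box centre with its value at the shifted point $x_B+x_0$ for shifts $|x_0|\le n^\alpha/K$, so that by \eqref{eq:harmdiff_bound} the relevant Gaussian has variance $O(1/K^2)$; on the event of \eqref{eq:bad_centers_negligible} (together with the fluctuation count $M^\alpha_{\{0,x_0\}}$, which is needed there to pass from $\phi$ to $h$), a sign discrepancy forces \emph{one} box with a jump of order $4\log n$, whose unconditional probability $e^{-cK^2\log^2 n}$ beats the capacity cost only after choosing $K$ large depending on $\beta$ and $D$; general $x_0$ is then reached by chaining over the boundedly many points of $\tfrac{n^\alpha}{4K}\Z^2\cap\Lambda_{n^\alpha}$. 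You instead work with $\phi$ at the centres directly (which is how $\sign$ is defined, so you need neither $M^\alpha$ nor any harmonic-extension transfer), take unit shifts, and compensate for the fact that a single unit increment of variance $O(1)$ cannot beat an arbitrarily large $\cpc_\Lambda(D)$ by exploiting that on the landslide event almost all of the $\approx n^{2(1-\alpha)}$ pairs must jump by $(4-2\beta)\log n$: Gaussian concentration for $\sum_{B\in S}Y_B$, with the unit-gradient covariance having $\lambda_{\max}=O(1)$ (your Gershgorin computation via \eqref{eq:Greensdiff_bounds}--\eqref{eq:potkernel_expansion} is indeed easier than Lemma \ref{lemma:harmonic_cov_EVs}, the off-diagonal row sums being $O(n^{-2\alpha}\log n)$), plus the $2^{|\Pi_\alpha|}$ union bound over subsets, gives $e^{-c\log^2 n\cdot n^{2(1-\alpha)}}$, which swamps the capacity cost with no parameter tuning; the polynomially long chain of $O(n^\alpha)$ unit steps is harmless because each step's conditional bound is $e^{-c\log^2 n}$, the bottleneck being $\P_n[F^c\,\vert\,\Omega^I_{D_n}]$ exactly as in the paper. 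What your route buys is the elimination of the harmonic-extension step, of $M^\alpha$, and of the $D$-dependent choice of $K$; what it costs is the uniform eigenvalue control for unit gradients across the whole grid and the subset union bound, both of which you handle correctly. Two cosmetic remarks: $\Pi_\alpha(x_0)$ is exactly $n^\alpha\Z^2$-periodic in $x_0$, so no boundary correction is needed in your reduction step, and the final constant $\tilde c$ is, as in the paper, inherited from the constant in \eqref{eq:bad_centers_negligible} (hence from the fixed choices of $\beta,\alpha,\eta$).
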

    \noindent Indeed, since then for any $x, y \in \tilde D_n$
    \begin{align*}
        \E_n\bigg[\frac{\phi(x)}{|\phi(x)|} \cdot \frac{\phi(y)}{|\phi(y)|} ~\bigg|~\Omega_{D_n}^I\bigg] &= 1 - 2 \P_n \big[ \sgn(\phi(x)) \neq \sgn(\phi(y))~ |~ \Omega_n^I\big] \\
        &\geq 1- 4 \max_{z =x, y}\P_n\big[ \sign(z)\neq \sign(0) \text{ or } \sgn(\phi(z))\neq \sign(z)~ |~\Omega_{D_n}^I\big] \\
        &\geq 1- \frac{\varepsilon}{2} - 4 e^{-\tilde c \log^2 n} \geq 1 - \varepsilon
    \end{align*}
    for all $n$ sufficiently large as desired. 

    We proceed with the proof of the remaining claim.
\begin{proof}[Proof of Claim \ref{claim:grid_sign_indep}]
    Note that it suffices to consider $x_0\in \Lambda_{n^\alpha}$. We first verify the claim for sites $x_0 \in \Lambda_{n^\alpha/K}$ for $K \geq 8$ large, to be specified later. In what follows, $\beta\in(0,1)$ and $\alpha, \eta$ in terms of it are as above (in particular, $\eta \leq \beta/2$). Observe that by \eqref{eq:bad_centers_negligible} and Proposition \ref{prop:few_high_fluct_boxes}, the probability (under $\P_n [\cdot |\Omega^I_{D_n}]$) of the complement of the event 
    \begin{align*}
        A(0,x_0) \coloneqq \Big\{L^\alpha_+(z)\wedge L^\alpha_-(z), \, L^\alpha(\beta, z), \, M^\alpha_{\{0, x_0\}}(\eta, 0)\leq n^{2(1-\alpha)-\eta^3} \text{ for both } z=0, x_0\Big\}
    \end{align*}
    is bounded by $2e^{-c \log^2 n}$ for all $n$ sufficiently large depending on $\beta$. Here, $c>0$ is an absolute constant as above. Furthermore, notice that on the event $\{\sign(x_0) \neq \sign(0)\} \cap A(0,x_0)^c$, the set of boxes
    \begin{align*}
        \bigg\{B \in \Pi_\alpha(0): \frac{ |h_B-h^{\primorial_\alpha(0)}({x_B+x_0})|}{\log n}\geq 4 - 2\beta - 2\eta \bigg\}
    \end{align*}
    has cardinality at least $|\Pi_\alpha(0)|(1 - \mathcal{O}(n^{-\eta^3}))$, which in turn clearly implies that there exists at least one such box in $\Pi_\alpha(0)$.
    Since by \eqref{eq:harmdiff_bound}, $$\Var_n[h_B-h^{\primorial_\alpha(0)}({x_B+x_0})] \leq C/K^2$$ for some absolute constant $C>0$ and all $x_0 \in \Lambda_{n^\alpha/K}$, the (unconditional) probability of the latter event is bounded by
    \begin{align*}
        n^{2(1-\alpha)} \sup_{B \in \Pi_\alpha(0)}\P_n \big[|h_B-h^{\primorial_\alpha(0)}({x_B+x_0})|\geq (4 - 3\beta) \log n \big] \leq n^{2(1-\alpha)} e^{-\frac{(4 - 3\beta)^2 K^2}{2C} \log^2 n}
    \end{align*}
    Here we also used that $\eta \leq \beta/2$. By choosing $K$ (in dependence on $\beta$ and $D$) large enough such that $\frac{(4 - 3\beta)^2 K^2}{2C} \geq \frac{2}{\pi} \cpc_\Lambda(D) + 2$, in view of the above and \eqref{eq:lowerbound_condevent}, the proof in the case $x_0 \in \Lambda_{n^\alpha/K}$ is complete.

    To obtain the desired result for a general vertex $x_0 \in \Lambda_{n^\alpha}$, observe that the proof of the previous case translates (with all parameters and constants preserved) directly to the case of an arbitrary fixed vertex $z_*\in \Lambda_{n^\alpha}$ (not necessarily $0$) and all sites $z \in \Lambda_{n^\alpha/K} + z_*$ close to it, so define the finite ``reference" set 
    \begin{align*}
        G \coloneqq \frac{n^\alpha}{4K} \Z^2 \cap \Lambda_{n^\alpha}.
    \end{align*}
    Now, for any $x_0 \in \Lambda_{n^\alpha}$, there exists a sequence $0 = z_1, z_2, \dots, z_l \in G$ ($l \leq |G|$) such that $z_i \in G$, $|z_{i+1} - z_i| = \frac{n^\alpha}{4K}$, and $x_0 \in \Lambda_{n^\alpha/K} + z_l$, which by the above implies that for all $n$ sufficiently large
    \begin{align*}
        \P_n\big[\sign(x_0) \neq \sign(0) ~\big|~\Omega_{D_n}^I\big] \leq \sum_{i=1}^{l} \P_n\big[\sign(z_i) \neq \sign(z_{i+1}) ~\big|~\Omega_{D_n}^I\big] \leq |G| e^{-\tilde c \log^2 n}.
    \end{align*}
    Here for brevity we write $z_{l+1} = x_0$. This finishes the proof of the claim.
\end{proof}
    The proof of the theorem is now complete.
\end{proof}
 
Before moving on to proving Proposition \ref{prop:pick_sign}, we show that combined with the other two key ingredients (Propositions \ref{prop:few_low_boxes} and \ref{prop:few_high_fluct_boxes}), it gives us the missing upper bound on the probability of $\Omega^I_{D_n}$ such that \eqref{eq:lowerbound_condevent} becomes an equality for $N=1$. Our proof follows closely the one of \cite[Lemma 10]{entropRep2D}.
\begin{proposition}[Upper bound on the probability of the conditional event]\label{prop:N=1upperbound_condition}
    We have the following:
    \begin{align}
    \label{eq:prob_cond.event}
        \lim_{n \rightarrow \infty} \frac{\log \P_n[ \Omega_{D_n}^I]}{\log^2 n} \leq -\frac{2}{\pi} \:\cpc_\Lambda(D).
    \end{align}
\end{proposition}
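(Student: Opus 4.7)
\emph{Plan.} The strategy is to mirror the proof of \cite[Lemma 10]{entropRep2D} for the hard-wall case, adding a preliminary step that collapses the ball-avoidance event onto a one-sided, hard-wall-type event for the harmonic extension $h^{\primorial_\alpha}$. Precisely this collapse is what Proposition \ref{prop:pick_sign}, combined with Propositions \ref{prop:few_low_boxes} and \ref{prop:few_high_fluct_boxes}, enables.

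Concretely, fix small $\beta>0$ and choose $\alpha\in(0,1)$ close to one together with $\eta>0$ small (depending on $\beta$, $\alpha$) as in the proof of Theorem \ref{thm:repulsion_ball}. Introduce the pure Gaussian event
\begin{align*}
    A_\beta^+ \coloneqq \bigl\{\#\{B\in \Pi_\alpha : h_B < (2-\beta)\log n\}\leq n^{2(1-\alpha)-\eta^3}\bigr\},
\end{align*}
and let $A_\beta^-$ be its sign-mirror. Propositions \ref{prop:few_low_boxes} and \ref{prop:few_high_fluct_boxes} force $\Omega_{D_n}^I$ onto $A_\beta^+\cup A_\beta^-$ up to a term of order $e^{-(\tfrac{2}{\pi}\cpc_\Lambda(D)+1)\log^2 n}$, while Proposition \ref{prop:pick_sign} forbids the two sign regimes from coexisting at leading order. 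Combined with the symmetry $\phi\leftrightarrow -\phi$ (which swaps $\Omega_{D_n}^I\cap A_\beta^-$ with $\Omega_{D_n}^{-I}\cap A_\beta^+$), one obtains
\begin{align*}
    \P_n[\Omega_{D_n}^I]\leq 2\,\P_n[A_\beta^+] + e^{-c_0\alpha^2\log^2 n}\,\P_n[\Omega_{D_n}^I] + e^{-(\tfrac{2}{\pi}\cpc_\Lambda(D)+1)\log^2 n},
\end{align*}
and hence $\P_n[\Omega_{D_n}^I]\leq (2+o(1))\,\P_n[A_\beta^+] + e^{-(\tfrac{2}{\pi}\cpc_\Lambda(D)+1/2)\log^2 n}$ for all $n$ large enough.

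It remains to bound $\P_n[A_\beta^+]$: a question about the Gaussian vector $(h_B)_{B\in \Pi_\alpha}$, of exactly the form treated by \cite[Lemma 10]{entropRep2D}. In outline, one union-bounds over the subset $S\subset \Pi_\alpha$ of ``low'' boxes---the combinatorial factor $|\Pi_\alpha|^{|S|}$ contributes only $e^{o(\log^2 n)}$ since $|S|\leq n^{2(1-\alpha)-\eta^3}$---and then estimates, for each fixed $S$, the Gaussian lower-half-space probability $\P_n[h_B\geq(2-\beta)\log n,\, \forall B\in\Pi_\alpha\setminus S]$ by identifying its minimum-Cameron--Martin-norm shift with a discrete harmonic function realising the variational problem for $\cpc_\Lambda$. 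The outcome is $\P_n[A_\beta^+]\leq \exp(-\tfrac{(2-\beta)^2}{2\pi}\cpc_\Lambda(D)\log^2 n\,(1+o(1)))$; taking logs, dividing by $\log^2 n$, sending $n\to\infty$ and then $\beta\to 0$ yields \eqref{eq:prob_cond.event}. The main obstacle is precisely the continuity-of-capacity input in this last step: one must verify that no configuration of $n^{2(1-\alpha)-\eta^3}$ ``low'' boxes can be arranged so as to strip $D$ of a macroscopic fraction of its relative capacity. This is handled identically to \cite[Lemma 10]{entropRep2D}, since the underlying Gaussian object is the same.
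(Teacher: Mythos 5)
Your reduction step is fine and essentially coincides with the paper's: Propositions \ref{prop:few_low_boxes} and \ref{prop:pick_sign}, together with \eqref{eq:lowerbound_condevent} and the $\phi\mapsto-\phi$ symmetry, reduce \eqref{eq:prob_cond.event} to a bound of the form \eqref{eq:remains_to_bound} on a purely one-sided event (Proposition \ref{prop:few_high_fluct_boxes} is not even needed there, since $A_\beta^{\pm}$ only involves $h_B$). The problem is the second step, where you bound $\P_n[A_\beta^+]$ by a union bound over the set $S$ of low boxes. The combinatorial estimate you rely on is false: with $|S|$ allowed to be as large as $n^{2(1-\alpha)-\eta^3}$, the number of admissible sets $S\subset\Pi_\alpha$ is of order $\exp\big(c\,n^{2(1-\alpha)-\eta^3}\log n\big)$, i.e.\ its logarithm is a positive power of $n$, not $o(\log^2 n)$. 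This is fatal, because each individual term in the union is itself only of order $e^{-\Theta(\log^2 n)}$: enlarging $S$ only increases the probability, and already $\P_n[h_B\geq(2-\beta)\log n,\ \forall B\in\Pi_\alpha]\geq e^{-C\log^2 n}$ by the same Cameron--Martin shift argument that yields the matching lower bound \eqref{eq:lowerbound_condevent}. Hence the union bound produces an estimate of size $\exp\big(c\,n^{2(1-\alpha)-\eta^3}\log n - C\log^2 n\big)\gg 1$, which is vacuous; no choice of parameters rescues an argument that enumerates the positions of the low boxes.

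The paper's proof avoids ever selecting $S$. It tests the field against a fixed smooth $f\geq 0$ through the single Gaussian variable $I_f=n^{-2(1-\alpha)}\sum_{B\in\Pi_\alpha}f(x_B/n)\,h_B$. On the event $\Omega^I_{D_n}\cap\{N^\alpha_-,N^\alpha(\beta)\leq n^u\}$, the at most $\mathcal{O}(n^u)$ exceptional boxes contribute only $\mathcal{O}(n^{u-2(1-\alpha)}\log^2 n)$ to $I_f$ \emph{wherever they are located}, thanks to the a priori bound $\sup_{x\in\primorial_\alpha}|\phi(x)|\leq\log^2 n$ (whose failure has probability $\mathcal{O}(e^{-c\log^3 n})$, negligible at this scale); hence $I_f\geq(2-\beta)(1-o(1))\log n\int_D f$. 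A one-dimensional Gaussian tail bound, the variance asymptotics $\Var_n[I_f]\to 2\pi\,\sigma^2_\Lambda(f\ind_D)$ from \cite[Lemma 13]{entropRep2D}, and the dual variational formula $\cpc_\Lambda(D)=\sup_f\,(\int_D f\,\d x)^2/(2\sigma^2_\Lambda(f\ind_D))$ then give \eqref{eq:remains_to_bound}, and $\beta\to 0$ concludes. In particular, the ``continuity of capacity after deleting the low boxes'' issue you flag at the end never arises in this route; to repair your write-up you should replace the union-bound/Cameron--Martin step by such an averaging argument (or another device whose entropy over the unknown low-box configuration is $e^{o(\log^2 n)}$).
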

\begin{proof}
    Let $x_0 = 0$. For brevity, we suppress the dependence on $x_0 = 0$ and write $N^\alpha_\pm$ and $N^\alpha(\beta)$ instead of $N^\alpha_\pm(0)$ and $N^\alpha(\beta,0)$, respectively. By Propositions \ref{prop:pick_sign} and \ref{prop:few_low_boxes} and symmetry, it suffices to prove that for $\beta \in (0, 2)$, $\alpha \in (0,1)$ sufficiently close to $1$ depending on $\beta$, and $u \in (0,2(1-\alpha))$, 
    \begin{align}
    \label{eq:remains_to_bound}
        \limsup_{n \rightarrow \infty} \frac{\log \P_n[\Omega^I_{D_n} \cap \{N^\alpha_- , N^\alpha(\beta) \leq n^u\}]}{\log^2 n} \leq - \frac{(2 - \beta)^2 }{2\pi} \cpc_\Lambda(D).
    \end{align}
    Let $f \in C^1(\Lambda)$ be non-negative in $D$ (such that $f\vert_D \not\equiv 0$). On the event $\Omega^I_{D_n} \cap \{N^\alpha_- , N^\alpha(\beta) \leq n^u\}$, we have 
    \begin{align*}
        I_f &\coloneqq \frac{1}{n^{2(1-\alpha)}} \sum_{B \in \Pi_\alpha} f\Big(\frac{x_B}{n}\Big) h_B \\
        &\:\geq (2 - \beta) \log n \bigg( \frac{1}{n^{2(1-\alpha)}} \sum_{B \in \Pi_\alpha} f\Big(\frac{x_B}{n}\Big) - \frac{n^u \norm{f}_{L^\infty(D)}}{n^{2(1-\alpha)}} \bigg(2 + \sup_{x \in \primorial_\alpha}\frac{|\phi(x)|}{(2 - \beta)\log n}\bigg) \bigg), 
    \end{align*}
    using the bound $|h_B| \leq \sup_{x \in \primorial_\alpha} |\phi(x)|$ for all $B \in \Pi_\alpha$. Note that
    \begin{align*}
        \P_n\Big[\sup_{x \in \primorial_\alpha} |\phi(x)| \geq \log^2 n\Big] \leq 2n^2 \sup_{x \in \primorial_\alpha} \P_n \big[\phi(x) \geq \log^2 n\big] = \mathcal{O}\big(e^{-c\log^3 n}\big)
    \end{align*} 
    for an appropriate $c>0$ by \eqref{eq:Greensfct_bounds}, making this event negligible compared to $\Omega^I_{D_n}$. On its complement—$\sup_{x \in \primorial_\alpha} |\phi(x)| \leq \log^2 n$—by the above, we obtain that 
    \begin{align*}
        I_f &\geq (2 - \beta) \log n \bigg( \frac{1}{n^{2(1-\alpha)}} \sum_{B \in \Pi_\alpha} f\Big(\frac{x_B}{n}\Big) - \mathcal{O}\big(n^{u - 2(1-\alpha)} \log n\big)\bigg)\\
        &\geq (2 - \beta) (1-o(1)) \log n \int_D f \d x. 
    \end{align*}

    By \cite[Lemma 13]{entropRep2D}, $$\lim_{n\rightarrow \infty} \frac{1}{2\pi} \Var_n[I_f] = \sigma^2_\Lambda(f \ind_D) = \int_D\int_D f(x) G_\Lambda(x,y) f(y) \d x \d y,$$
    where $G_\Lambda$ is the Green's function\footnote{Here, the Brownian motion is analytically normalized, in contrast to the reference, which employs the random walk (RW) normalization: $G_\Lambda(x,y)$ here equals $\frac{1}{2d}$ times theirs, which offsets the typo in their definition of capacity. The prefactor $1/(2\pi)$ compensates for our choice of normalization of the GFF.} of the Brownian motion killed as it exits $\Lambda$. 
    In view of this and the preceding steps, we have
    \begin{align*}
        \limsup_{n \rightarrow \infty} \frac{\log \P_n[ \Omega_{D_n}^I]}{\log^2 n} \leq -\frac{(2 - \beta)^2}{2\pi} \frac{(\int_D f \d x)^2}{2 \sigma_\Lambda^2(f \ind_D)}.
    \end{align*}
    We conclude the proof by optimizing in $f$ and recalling that the relative capacity can also be expressed (cf.~\cite[Proof of Theorem 2.2]{Deuschel1996} combined with \cite[Lemma 2.1]{critLDP} and \cite[(A.6)]{entropRep3+D}) as
    \begin{align*}
        \cpc_\Lambda(D) = \sup\bigg\{\frac{(\int_D f\d x)^2}{2\sigma_\Lambda^2(f \ind_D)}: f \in C^1(\Lambda), f\vert_D \not\equiv 0\bigg\},
    \end{align*}
    see also \cite[Exercise 2.2.2 and Example 2.2.2]{FukushimaOshimaTakeda0} for the reduction to non-negative functions.
\end{proof}

\subsubsection{Centers pick a sign}
\label{subsubsec:pick_sign}

In this subsection, we finally prove Proposition \ref{prop:pick_sign}, the last missing part of Theorem \ref{thm:freezing}. The main idea behind the proof of Proposition \ref{prop:pick_sign} is the observation that if the centers do not pick a sign, we are in one of the two possible scenarios: either there are many centers, where $h_B$ is close to $0$, or there is a big interface between centers that have picked different signs with jumps $|h_B - h_{B'}|$ of noticeable magnitude. The former can be discarded by Proposition \ref{prop:few_low_boxes} (in view of \eqref{eq:alpha_choice}).   
The latter situation can also be ruled out, by exploiting the fact that by adding an appropriate independent Gaussian field, one transforms the field of differences of $h_B$ into a white noise, and then by applying Bernstein's inequality.
\begin{proof}[Proof of Proposition \ref{prop:pick_sign}]
    Note that it suffices to show that there exists an absolute constant $c_0 > 0$ such that for $\alpha \in (0,1)$, $x_0 \in \Z^2$, and $u \in (0, 2(1-\alpha))$, 
    \begin{align*}
        \P_n \big[N^\alpha_+(x_0) \geq N^\alpha_-(x_0) \geq n^u ~\big|~\Omega^I_{D_n}\big] \leq e^{-c_0 \alpha^2 \log^2 n},
    \end{align*}
    as the remaining case follows by considering $I = (-b, -a)$ and symmetry. In the sequel, we suppress dependence on $x_0$ for better readability. To prove this, we partition $\Pi_\alpha$ into two subsets: we say that $B \in \Pi_\alpha$ is either \emph{positive} or \emph{negative}, according to the sign of $h_B$. 
    This allows us to define $\II_\alpha \subset \,\primorial_\alpha$, the random interface between the negative and positive boxes contained in $D_n$ (see Figure \ref{fig:interfaces}), i.e., the collection of inner boundaries of boxes of $\Pi_\alpha$ that are shared by both a positive and a negative box.
    \begin{figure}[ht]
        \centering
        \includegraphics[width=0.7\linewidth]{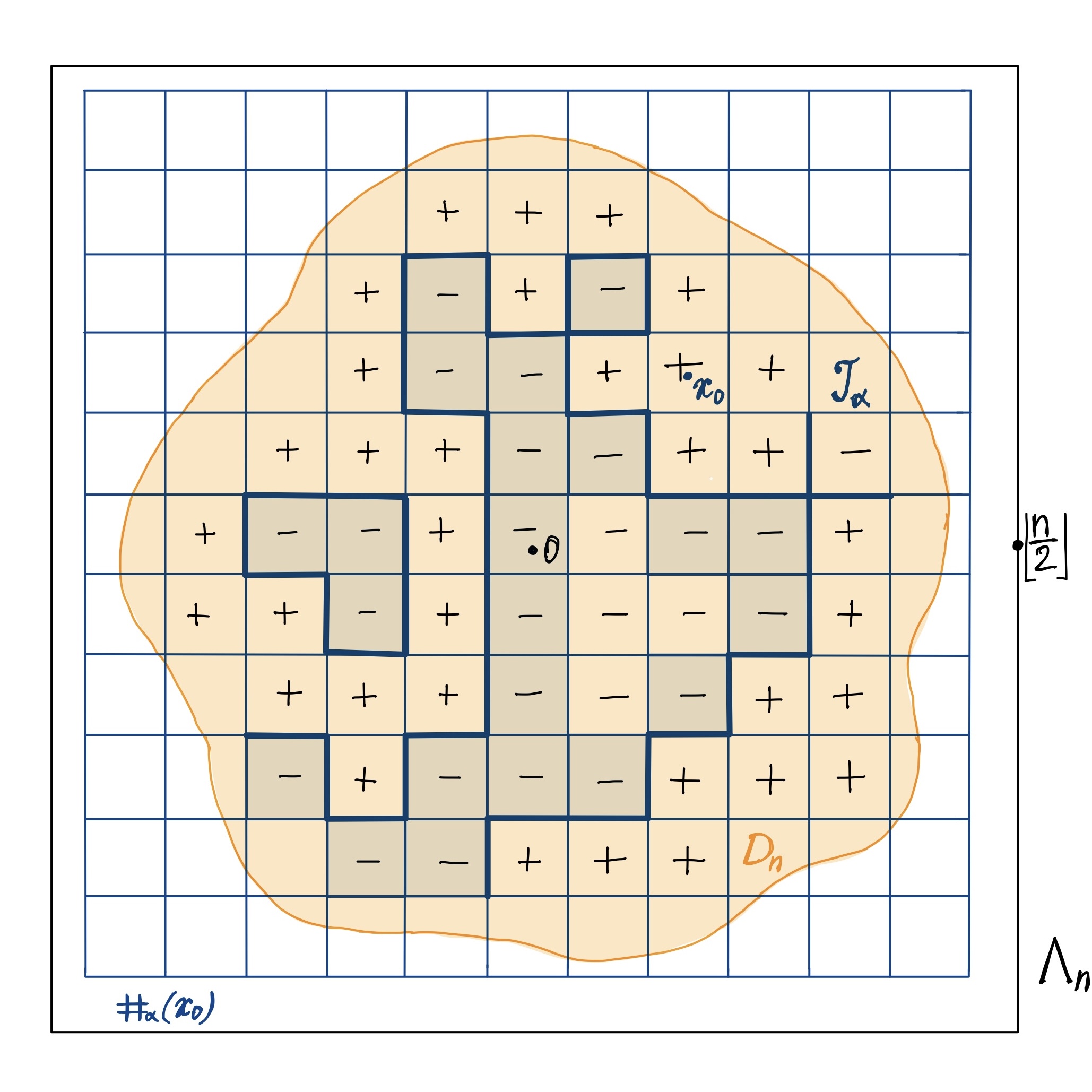}
        \caption{The part of the \emph{blue} subgrid of $\Z^2$ contained in $D_n$ is $\primorial_\alpha(x_0)$. The union of boxes $B$ formed by this subgrid is $\Pi_\alpha(x_0)$. The thick \emph{blue} lines form $\II_\alpha$, the interface between \emph{positive} and \emph{negative} boxes contained in $D_n$. The \emph{blue} boxes are exactly the ones counted in $N^\alpha_- |_{\II_\alpha}= N^\alpha_{<-\delta}|_{\II_\alpha} + N^\alpha_{-\delta, 0}|_{\II_\alpha}$.}
        \label{fig:interfaces}
    \end{figure} 

    Let $\delta > 0$ be small enough (to be specified later). We separate negative boxes into two types: boxes with $\frac{h_B}{ \log n}$ ``sufficiently negative" and ``close to zero", respectively; and introduce the following notation for the cardinality of the corresponding sets
    \begin{align*}
        N^\alpha_{<-\delta} &\coloneqq \#\big\{B \in \Pi_\alpha: h_B < -\delta  \log n\big\}; \\
        N^\alpha_{-\delta, 0} &\coloneqq \#\big\{B \in \Pi_\alpha \cap  D_n: -\delta  \log n \leq h_B < 0\big\}
    \end{align*}  
    Furthermore, for $N \in \{N^\alpha_-, N^\alpha_{<-\delta}, N^\alpha_{-\delta, 0}\}$, we denote by $N |_{\II_\alpha}$ the number of boxes of the respective type, which have at least one side of the inner boundary in $\II_\alpha$.    

    By the isoperimetric inequality, there exists an absolute constant $c>0$ such that on the event $\{N^\alpha_+ \geq N^\alpha_- \geq n^u\}$, $N^\alpha_{<-\delta}|_{\II_\alpha} \geq c n^{u/2}$ or $N^\alpha_{-\delta, 0}|_{\II_\alpha} \geq c n^{u/2}$. In particular, we get 
    \begin{align*}
        \P_n \big[\Omega^I_{D_n} \cap \{N^\alpha_+ \geq N^\alpha_- \geq n^u\}\big] \leq &\P_n\big[\{N^\alpha_{-\delta, 0}|_{\II_\alpha} \geq c n^{u/2}\} \cap \Omega^I_{D_n}\big] \\
        + &\P_n \big[\{N^\alpha_{<-\delta}|_{\II_\alpha} \geq c n^{u/2}\} \cap \Omega^I_{D_n} \big] \eqqcolon p_1 + p_2.
    \end{align*}
    Observe that by Proposition \ref{prop:few_low_boxes} and its proof (more precisely, \eqref{eq:alpha_choice}), the first summand $p_1$ is bounded by $e^{-(\frac{2}{\pi}\cpc_\Lambda(D) + 1)\log^2 n}$ as long as $\alpha \geq \alpha_0(2 - \delta) = \frac{\delta}{1 + \delta/2}$, which is verified for $\delta \coloneqq \alpha$.  Combined with \eqref{eq:lowerbound_condevent}, this yields that for all $n$ sufficiently large, $p_1/\P_n[\Omega^I_{D_n}] \leq e^{-\frac{1}{2}\log^2 n}$. 

    We proceed to estimating the second summand $p_2$. To this end, we consider the Gaussian process $Z$ defined by $Z({B,p})=h^{\primorial_\alpha}_{B+ n^\alpha p}-h_B$ for $B\in \Pi_\alpha$ and $p=\{1,i\} \subset \Z^2 \subset \C$. Note that on the event $\{N^\alpha_{<-\delta}|_{\II_\alpha} >cn^{u/2}\}$, the number of pairs $(B,p)$ such that $|Z({B,p})|\geq \delta \log n$ is at least $c n^{u/2}$. We now show that this is extremely unlikely. Note that by Lemma \ref{lemma:harmonic_cov_EVs}, the largest eigenvalue of the covariance matrix $\tilde \Sigma$ of $Z$ is uniformly bounded by $\tilde c>0$. Therefore, $\check{\Sigma} \coloneqq 2\tilde c \mathrm{Id} - \tilde \Sigma$ is positive definite. Define, independently of $Z$ (and $\phi$ in general), $\check Z = (\check Z({B,p}))_{B \in \Pi_\alpha, p\in \{1, \iu\}}$ as a centered Gaussian process with covariance matrix $\check \Sigma$. In particular, $W \coloneqq Z - \check{Z}$ is a white noise, that is, a collection of i.i.d. centered random variables of variance $2\tilde c$. Combined, the above implies that for all $n$ sufficiently large
    \begin{align*}
        \P_n &\big[\Omega^I_{D_n} \cap\{N^\alpha_{<-\delta}|_{\II_\alpha} \geq c n^{u/2} \} \big] \\
        &\leq\P_n\big[\Omega^I_{D_n}\big] \P_n\bigg[\sup_{B, p} |\check Z({B,p})| > \frac{\delta}{2} \log n \bigg] 
        + \P_n \bigg[\sum_{B, p} \ind_{\big\{|W({B,p})| > \frac{\delta}{2} \log n\big\}} \geq c n^{u/2}\bigg]\\
        &\leq \P_n\big[\Omega^I_{D_n}\big] e^{-\hat{c} \alpha^2 \log^2 n} 
        + \P_n \bigg[\sum_{B, p} \ind_{\big\{|W({B,p})| > \frac{\delta}{2} \log n\big\}} \geq c n^{u/2}\bigg]
    \end{align*}
    for an appropriate absolute constant $\hat{c}>0$, and where in the last inequality we used the union bound, the fact that $\delta = \alpha$ and that $\Var_n [\check Z({B,p})] \leq 2\tilde c$.

    It now only remains to control the event that there are many pairs $(B,p)$ with $|W({B,p})|$ very large, which we do using Bernstein's inequality applied to i.i.d. variables 
    \begin{align*}
        \hat W({B,p}):= \ind_{\big\{|W({B,p})| > \frac{\delta}{2} \log n\big\}}- \P\bigg[|W({B,p})| > \frac{\delta}{2} \log n\bigg] \in [-1,1]. 
    \end{align*}
    Note that since $\P[|W({B,p})|\geq \frac{\delta}{2} \log n] \leq e^{-\hat{c}\alpha^2 \log^2n}$, we obtain for all sufficiently large $n$ that
    \begin{align*}
        \P_n \bigg[\sum_{B, p} \ind_{\big\{|W({B,p})| > \frac{\delta}{2} \log n\big\}} \geq c n^{u/2}\bigg] &\leq \P_n \bigg[\sum_{B, p} \hat W({B,p}) \geq \frac{c}{2} n^{u/2}\bigg] \\
        &\leq e^{-\frac{c}{2} n^{u/2}} \leq e^{-(\frac{2}{\pi} \cpc_\Lambda(D) + 1) \log^2 n}.
    \end{align*}
    The proof is completed by invoking Corollary \ref{cor:halfline_implications}.
\end{proof}


\section[Massive GFF conditioned to avoid a ball]{The massive Gaussian free field conditioned to avoid a ball}
\label{sec:massiveGFF}

As a consequence of Theorem \ref{thm:freezing}, we see that any infinite-volume limit of the spins of a vector-valued massless GFF conditioned to avoid a ball at each site is a constant field. Thus, if we hope to understand the spin $O(N)$ model at low temperature using the GFF, we would need to work with its massive counterpart. Motivated by this, we show that the massive vector-valued GFF on $\Z^d$ conditioned to avoid a ball at each site in a growing box has infinite-volume limits, and then discuss their uniqueness in certain regimes.

We start with the possible infinite-volume limits of a vector-valued massive GFF conditioned to avoid a given open set $I \subset \R^N$ at each site of $\Z^d$. To this end, consider the set of Gibbs states on $(I^c)^{\Z^d}$, denoted $\G(\U^I)$, corresponding to the interaction potential $\U^I = \{U^I_F: \emptyset \neq F \ssubset \Z^d\}$ (see \cite[Chapter 6]{StatMech_book} for an introduction to the topic):
\begin{align*}
    U^I_F \coloneqq \begin{cases}
        \frac{2d+m^2}{2} |\psi(x)|^2 + \infty \ind_{\psi(x) \in I} & F = \{x\};\\
        -\psi(x)\cdot  \psi(y) & F = \{x,y\} \text{ such that } x\sim y;\\
        0 &\text{otherwise}.
    \end{cases}
\end{align*}
Let $\G_1(\U^I)$ be the subset of $\G(\U^I)$ consisting of measures $\mu$ with uniform first moments at each site, i.e., $\mu$ is such that $\sup_{x \in \Z^d} \mu[|\psi(x)|] \leq C < \infty$.
We now restate the existence part of  Theorem \ref{thm:massiveGFF}  using the definition of $\Omega_{\Lambda_n}^I$ in \eqref{eq:Omega_def} in terms of $\psi$.
\begin{proposition}
\label{prop:massiveGFF_existence}
     Let $d\geq 2$, $N \in \N$, $R > 0$, $I = \B^N(0,R) \subseteq \R^N$, and $\P_{m^2}$ denote the law of an $m^2$-massive $N$-vectorial GFF on $\Z^d$. The sequence of probability measures $(\P_{m^2}[\cdot | \Omega^I_{\Lambda_n}])_n$ is tight, and all its subsequential limits lie in $\G_1(\U^I)$. In particular, $\G_1(\U^I)$ is non-empty.
\end{proposition}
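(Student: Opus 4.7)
The argument splits into three stages: reducing tightness to a uniform first-moment bound, establishing that bound by combining Proposition \ref{prop:FKG_condGFF} with a local entropy inequality, and identifying the subsequential weak limits as Gibbs measures in $\G_1(\U^I)$.

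\emph{Stage 1 (Reduction to a moment bound).} Since $(\R^N)^{\Z^d}$ carries the product topology, tightness of the sequence $(\mu_n)_n \coloneqq (\P_{\Z^d, m^2}[\,\cdot\mid \Omega^I_{\Lambda_n}])_n$ is equivalent to tightness of all its one-point marginals, and by Markov's inequality this is implied by a bound
\[
\sup_{n:\, x_0 \in \Lambda_n} \E_{\mu_n}\bigl[|\psi(x_0)|\bigr] \leq C \qquad \text{with } C \text{ independent of } x_0 \in \Z^d.
\]
The uniformity in $x_0$ is exactly what is needed to guarantee that any weak subsequential limit $\mu$ inherits $\sup_{x} \mu[|\psi(x)|] \leq C$, via Fatou applied to the continuous bounded truncations $|\psi(x)| \wedge M$ followed by $M \to \infty$; this places $\mu$ in $\G_1(\U^I)$.

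\emph{Stage 2 (Moment bound via FKG and local entropy).} Fix $x_0 \in \Lambda_n$ and introduce the local reference measure $\nu_{n, x_0} \coloneqq \P_{\Z^d, m^2}[\,\cdot \mid \Omega^I_{\Lambda_n \setminus \{x_0\}}]$. Since $\mu_n = \nu_{n, x_0}[\,\cdot\mid \psi(x_0) \notin I]$, the relative entropy equals $H(\mu_n\,|\,\nu_{n, x_0}) = -\log \nu_{n, x_0}[\psi(x_0) \notin I]$, and the entropy inequality gives, for every $\lambda > 0$,
\[
\lambda\, \E_{\mu_n}\bigl[|\psi(x_0)|\bigr] \leq \log \E_{\nu_{n, x_0}}\bigl[e^{\lambda |\psi(x_0)|}\bigr] + H(\mu_n \,|\, \nu_{n, x_0}).
\]
Two uniform estimates are then needed. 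For the entropy term, one decomposes $\Omega^I_{\Lambda_n \setminus \{x_0\}}$ along the signs of $\psi$ (or, for $N \geq 2$, along a finite covering of $I^c$ by half-spaces), applies Proposition \ref{prop:FKG_condGFF} on each piece, and uses that conditioning on avoidance elsewhere can only push $\psi(x_0)$ further from the origin to obtain $\nu_{n, x_0}[\psi(x_0) \notin I] \geq \P_{\Z^d, m^2}[\psi(x_0) \notin I] > 0$ uniformly in $n$ and $x_0$. For the Laplace transform, the domain Markov property of the unconditioned GFF at $x_0$ shows that, conditionally on $(\psi(y))_{y \neq x_0}$, the law of $\psi(x_0)$ is Gaussian with variance $(2d+m^2)^{-1}$ and mean $(2d+m^2)^{-1} \sum_{y \sim x_0} \psi(y)$; averaging and using that the strict positivity of $m^2$ makes the associated linear map contractive produces a recursive inequality for $\E_{\nu_{n, x_0}}[|\psi(\cdot)|]$ that can be closed uniformly on $\ell^\infty(\Z^d)$.

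\emph{Stage 3 (Subsequential limits lie in $\G_1(\U^I)$).} For any weak subsequential limit $\mu = \lim_k \mu_{n_k}$, the DLR equations associated with $\U^I$ follow from the standard local specification argument: for any finite $\Lambda \subset \Z^d$ and $n_k$ large enough that $\Lambda \subset \Lambda_{n_k}$, the conditional distribution of $\psi|_\Lambda$ given $\psi|_{\Lambda^c}$ under $\mu_{n_k}$ is exactly the Gibbs kernel $\gamma^{\U^I}_\Lambda$, because $\Omega^I_{\Lambda_{n_k}}$ factorizes over sites and the underlying GFF satisfies the DLR equations for the pure Gaussian potential on $\Z^d$; passing to the weak limit against bounded continuous test functions transfers this identity to $\mu$. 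Together with the transferred moment bound from Stage 2, this yields $\mu \in \G_1(\U^I)$. The main obstacle is the Laplace-transform estimate in Stage 2: since $\nu_{n, x_0}$ is itself a conditioned and hence non-Gaussian measure, Gaussian tail bounds do not apply directly, and the recursive bound must be closed uniformly in $x_0 \in \Z^d$, including for $x_0$ close to $\partial \Lambda_n$ where boundary effects on the contraction are most delicate.
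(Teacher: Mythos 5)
Your Stage 1 and Stage 3 are fine, and the entropy identity $H(\mu_n\,|\,\nu_{n,x_0}) = -\log \nu_{n,x_0}[\psi(x_0)\notin I]$ is correct. The genuine gap is the Laplace-transform estimate in Stage 2, which is the heart of your argument and which you yourself flag as an ``obstacle'' rather than resolve. By the one-site Markov property, $\E_{\nu_{n,x_0}}[e^{\lambda|\psi(x_0)|}] \leq e^{c\lambda^2}\,\E_{\nu_{n,x_0}}\bigl[e^{\frac{\lambda}{2d+m^2}\sum_{y\sim x_0}|\psi(y)|}\bigr]$, so bounding the log-Laplace transform at $x_0$ requires \emph{exponential} moments of the neighbours under $\nu_{n,x_0}$ — but the neighbours are conditioned sites, so this is exactly the quantity you set out to control, now for an exponential rather than a first moment. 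The ``recursive inequality for $\E_{\nu_{n,x_0}}[|\psi(\cdot)|]$'' you invoke is a recursion on first moments and cannot bound $\log \E_{\nu_{n,x_0}}[e^{\lambda|\psi(x_0)|}]$; a recursion at the level of exponential moments would have to be closed uniformly in $n$ (note $\E[e^{\lambda|\psi(x)|}\mid\Omega^I_{\Lambda_n}] \leq \E[e^{\lambda|\psi(x)|}]/\P[\Omega^I_{\Lambda_n}]$ is \emph{not} uniform, since $\P[\Omega^I_{\Lambda_n}]\to 0$), and you do not indicate how. In effect your choice of reference measure makes the entropy term $O(1)$ but transfers the entire difficulty into the moment-generating function of a conditioned, non-Gaussian field, which is circular as written. (Incidentally, if a one-site contraction recursion could be closed, it would bound $\sup_x\E_{\mu_n}[|\psi(x)|]$ directly — using that the conditional law at a site is a Gaussian with mean contracted by $\frac{2d}{2d+m^2}$, and that ball-avoidance inflates the conditional first moment only by a factor $1+\varepsilon$ plus a constant — with no entropy inequality at all; but that argument is not the one you give, and establishing the $(1+\varepsilon)$ inflation is precisely the kind of estimate your sketch omits.)

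A secondary problem: your lower bound $\nu_{n,x_0}[\psi(x_0)\notin I]\geq \P_{\Z^d,m^2}[\psi(x_0)\notin I]$ is justified by ``conditioning can only push $\psi(x_0)$ away from the origin'' via Proposition \ref{prop:FKG_condGFF}, but for $N\geq 2$ neither $\{\psi(x_0)\notin \B^N(0,R)\}$ nor the avoidance event at other sites is monotone, and the paper's FKG statement concerns coordinate-wise interval constraints; your proposed half-space decomposition is not spelled out. This piece is fixable without FKG: given $(\psi(y))_{y\neq x_0}$ the law of $\psi(x_0)$ is Gaussian with variance $(2d+m^2)^{-1}$, and by Anderson's inequality the conditional probability of leaving the ball is minimized at mean zero, giving a uniform positive lower bound. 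For comparison, the paper avoids one-site localization altogether: it reduces, via symmetry and the FKG inequality for the conditioned \emph{scalar} field, to the half-line event $\Omega^{(-\infty,R)}_{\Lambda_n}$, bounds $\E[\psi(x)\mid\Omega^{(-\infty,R)}_{\Lambda_n}]$ by $\sqrt{-2\log\P[\Omega^{(-\infty,R)}_{\Lambda_{2n}}]\,/\,\langle \1_{\Lambda_n},G_{\Z^d,m^2}|_{\Lambda_n\times\Lambda_n}^{-1}\1_{\Lambda_n}\rangle}$ following \cite[Lemma 4.7]{entropRep3+D}, lower-bounds the avoidance probability by Slepian's lemma, and controls $\lambda_{\max}(G_{\Z^d,m^2}|_{\Lambda_n\times\Lambda_n})$ by the exponential decay of the massive Green's function, with FKG supplying monotonicity in $n$; your local-entropy route would be a genuinely different (and shorter) argument if the exponential-moment step were actually closed, but as it stands it is not.
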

In fact, in the scalar case $N=1$, here and in what follows, we consider general intervals of the form $I=(a,b)$ with $-R\leq a<b\leq R$, or $I = (-\infty, b)$.

We now turn to the question of uniqueness, corresponding to the second part of Theorem \ref{thm:massiveGFF}. We show that for sufficiently small $R$ (under an additional constraint on the mass if $N\geq 2$), the measure of interest is unique. In the scalar case, we further study the large-$R$ regime and demonstrate that multiple infinite-volume measures exist in $\G_1(\U^I)$, thereby implying a phase transition for the conditioned scalar massive GFF on $\Z^d$ ($d \geq 2$).
\begin{proposition} \label{prop:massiveGFF_uniqueness} 
    If $m^2 > 2d(N-1)$, then for all $R>0$ sufficiently small (depending on $N, m^2, d$), we have $|\G_1(\U^I)| = 1$; in particular, the sequence $(\P_{m^2}[\cdot | \Omega^I_{\Lambda_n}])_n$ is weakly convergent in this case. For $N=1$, uniqueness also holds for any half-line $I = (-\infty, b)$, whereas for $I = (-R,R)$ with $R>0$ sufficiently large but finite, $|\G_1(\U^I)| > 1$. 
\end{proposition}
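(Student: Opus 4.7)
The plan is to handle the three assertions separately: uniqueness for small $R$ via Dobrushin's criterion, uniqueness for the scalar half-line via FKG-monotonicity, and non-uniqueness for the scalar large-$R$ case via comparison with the Ising model.

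For the small-$R$ regime, I would apply Dobrushin's uniqueness criterion in the Wasserstein variant suitable for unbounded spin spaces. The single-site specification of any $\mu\in\G_1(\U^I)$ is, conditional on a boundary configuration $\eta$, the Gaussian on $\R^N$ with mean $\bar\mu(\eta)=(2d+m^2)^{-1}\sum_{y\sim x}\eta(y)$ and covariance $(2d+m^2)^{-1}\mathrm{Id}$, restricted to $I^c$. Shifting one neighbour $\eta(y)$ by $\Delta$ shifts $\bar\mu$ by $\Delta/(2d+m^2)$; coupling the two resulting truncated Gaussians by rejection from a common realization of the unrestricted Gaussian produces an influence coefficient $C(x,y)\le (2d+m^2)^{-1}(1+\varepsilon(R,m^2,N,d))$, where the error term captures the probability that the common sample lands in $I$. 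For small $R$ this error vanishes, but its dependence on $N$ and $m^2$ (which one reads off by expanding the relevant Gaussian integrals near the sphere $\partial\B^N(0,R)$, whose surface measure scales like $R^{N-1}$) is what introduces the extra assumption $m^2>2d(N-1)$; under that hypothesis one verifies $\sum_{y\sim x}C(x,y)<1$ uniformly in $x$ for $R$ small, and Dobrushin's theorem yields $|\G_1(\U^I)|=1$.

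For the scalar half-line case $I=(-\infty,b)$, additional monotonicity allows a purely structural argument. The event $\Omega^I_{\Lambda_n}=\{\psi\ge b \text{ on }\Lambda_n\}$ is FKG, so by Proposition~\ref{prop:FKG_condGFF} the measures $\P_{\Z^d,m^2}[\,\cdot\mid\Omega^I_{\Lambda_n}]$ are FKG, and enlarging the volume imposes the constraint at more sites, which stochastically lowers the field. Hence the sequence decreases monotonically to a Gibbs measure $\mu_*\in\G_1(\U^I)$. An arbitrary $\mu\in\G_1(\U^I)$ is, by DLR-consistency, a mixture of finite-volume specifications; the uniform first-moment bound built into $\G_1$, together with FKG applied to the single-site kernels, sandwiches $\mu$ between $\mu_*$ and the analogous measure obtained from boundary conditions pushed to $+\infty$, and monotone convergence forces equality.

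For $N=1$ and $I=(-R,R)$ with $R$ large, I would follow the blueprint from the introduction. Set $\tilde Q_n\coloneqq \P_{\Z^d,m^2}[\,\cdot\mid\Omega^I_{\Lambda_n}\cap\Omega^{(-\infty,R)}_{\Lambda_{2n}\setminus\Lambda_n}]$, which additionally forces $\psi\ge R$ on the annulus $\Lambda_{2n}\setminus\Lambda_n$. The same FKG-plus-entropy argument underpinning Proposition~\ref{prop:massiveGFF_existence} shows that $(\tilde Q_n)_n$ is tight and that every subsequential limit $\tilde\mu$ belongs to $\G_1(\U^I)$. To separate $\tilde\mu$ from the symmetric limit $\mu$ of Proposition~\ref{prop:massiveGFF_existence} it suffices to show $\tilde\mu[\sgn\psi(0)]>0$, whereas $\mu[\sgn\psi(0)]=0$ by the $\psi\leftrightarrow-\psi$ symmetry of $\Omega^I_{\Lambda_n}$. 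Conditionally on $(|\psi(x)|)_{x\in\Lambda_{2n}}$ together with the annular event, the sign field $\sigma(x)=\sgn\psi(x)$ on $\Lambda_n$ is an Ising model with $+$-boundary condition (forced by $\sigma\equiv+1$ on the annulus, where $\psi\ge R>0$) and nearest-neighbour couplings $|\psi(x)||\psi(y)|\ge R^2$. Griffiths' monotonicity in couplings and in boundary conditions gives $\tilde Q_n[\sgn\psi(0)\mid|\psi|]\ge m^*(R^2)$, where $m^*(\beta)$ is the spontaneous magnetization of the standard Ising model on $\Z^d$; for $R^2$ exceeding the critical Ising inverse temperature $\beta_c(\Z^d)<\infty$ ($d\ge 2$), $m^*(R^2)>0$ uniformly in $n$. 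Averaging over $|\psi|$ and passing to the limit gives $\tilde\mu[\sgn\psi(0)]\ge m^*(R^2)>0$, so $\tilde\mu\ne\mu$.

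The main technical obstacle is the first part: the Dobrushin coefficient must be estimated for a Gaussian whose truncation is not centred at the conditional mean, so the natural translation coupling does not close and one must quantify the non-translation correction. Tracking the $N$- and $m^2$-dependence of this correction precisely enough to isolate the threshold $m^2>2d(N-1)$ is where the bulk of the calculation lies; by contrast, the scalar half-line case is essentially a structural FKG argument, and the Ising comparison in the large-$R$ case is routine once tightness and the FKG reduction are in hand.
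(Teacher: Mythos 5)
Your third part (non-uniqueness for $N=1$, $I=(-R,R)$, $R$ large) is essentially the paper's argument: annulus conditioning, tightness inherited from the existence proof, the conditional law of the signs given $(|\psi(x)|)_x$ being an Ising model with plus boundary condition and couplings at least $R^2$, monotonicity in the couplings, and uniform positivity of the magnetization versus the zero magnetization of the symmetric limit. That part is fine. The two uniqueness parts, however, contain genuine gaps.

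For the small-$R$ regime you pick the right tool (Dobrushin's criterion in Wasserstein form), but the key estimate is neither carried out nor consistent with the statement being proved. If your rejection/translation coupling really gave an influence coefficient $C(x,y)\le (2d+m^2)^{-1}\big(1+\varepsilon(R,m^2,N,d)\big)$ with $\varepsilon\to 0$ as $R\to 0$ for fixed $N,m^2,d$, then $\sum_{y\sim x}C(x,y)\le \frac{2d}{2d+m^2}(1+\varepsilon)<1$ for small $R$ with no hypothesis on $m^2$ whatsoever, so the threshold $m^2>2d(N-1)$ cannot ``emerge'' from a surface correction of order $R^{N-1}$; attributing it to that term is incoherent. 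Moreover the coupling bound itself is not routine: the forbidden region $I$ does not translate along with the conditional mean, and on the failure event the transport cost is of the order of the spread of the single-site conditional law (roughly $\sqrt{N/(2d+m^2)}+R$), not of order $|\psi(y)-\phi(y)|$, so a bound linear in the perturbation requires a real argument that you explicitly defer. The paper's actual route is different and is precisely what produces the threshold: it bounds $\sup_{f:L(f)\le 1}|\nabla_u\, p^{I,m^2}_x(f\,|\,u)|$ by the variance $\inf_v \E[|X-v|^2]$ of the truncated single-site Gaussian; for $N\ge 2$ this is approximately $\frac{N}{2d+m^2}$ up to a factor $(1-O(R^N))^{-1}$, and Dobrushin's condition $<\frac{1}{2d}$ is exactly $m^2>2d(N-1)$ for $R$ small.

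For the scalar half-line $I=(-\infty,b)$ your ``structural'' argument is not a proof as written. FKG monotonicity does give that every $\mu\in\G_1(\U^I)$ is stochastically sandwiched between the limits with minimal and maximal boundary data, but uniqueness requires showing these two extremal limits coincide, which is exactly the point at issue and is not supplied by ``monotone convergence forces equality''; in addition, ``an arbitrary $\mu$ is a mixture of finite-volume specifications'' is not a correct reading of the DLR equations, and the maximal boundary condition (pushed to $+\infty$) needs the uniform moment bound even to be meaningful. You would need, e.g., agreement of one-point expectations under the two extremal limits. The paper avoids all of this by folding the half-line case into the same Dobrushin computation: for a one-sided truncation the correction term $L$ is shown to be nonpositive, so the single-site conditional variance is at most $\frac{1}{2d+m^2}<\frac{1}{2d}$, giving uniqueness for every $b$ with no smallness condition.
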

Let us mention that for $d=2$ and $N\geq 2$, we expect $\G_1(\U^I)$ to contain only one measure. The same is expected for the spin $O(N)$ model; however, a complete characterization of the corresponding Gibbs measures in infinite volume has not yet been established. 

\begin{remark}
    The proof of Proposition \ref{prop:massiveGFF_uniqueness} for $N=1$  also implies uniqueness in the case $I=(a,b)$ provided one of the endpoints $a$ or $b$ is fixed and the other is sufficiently large in absolute value (depending on $m^2$ and the fixed endpoint).
\end{remark}

\subsection{Tightness and uniform first moments}
\label{subsec:massive_tightness.moment}

This subsection is devoted to the proof of Proposition \ref{prop:massiveGFF_existence}. We begin with the scalar case $N=1$, where $I$ is either $(-\infty, b)$ or $(a,b)$ with $-R \leq a < b \leq R$ for some $R>0$. Let $(\Q_n)_n$ denote either $(\P_{m^2}[\cdot \mid \Omega^I_{\Lambda_n}])_n$ or $(\P_{m^2}[\cdot \mid \Omega^I_{\Lambda_n}, \psi(y) \geq R, \, \forall y \in \Lambda_{2n}\setminus \Lambda_n])_n$. The latter sequence will be used to establish the phase transition in Proposition \ref{prop:massiveGFF_uniqueness}.
\begin{proof}[Proof of Proposition \ref{prop:massiveGFF_existence} for $N=1$]
Observe that it suffices to prove that there exists a constant $C = C(m^2, R, d) > 0$ such that
\begin{align}\label{eq:main_existence_massive}
    \sup_{x \in \Z^d}\sup_n \Q_n[|\psi(x)|] \leq C.
\end{align}
This, in turn, follows from the following claim by choosing $C \geq 2\tilde C$ large enough.
\begin{claim}\label{claim:limsup_moment}
    There exists $\tilde C = \tilde C(m^2, R, d)>0$ such that for any $x \in \Z^d$, 
    \begin{align*}
        \limsup_{n \rightarrow \infty} \E_{m^2}\big[\psi(x) ~\big\vert~ \Omega^{(-\infty,R)}_{\Lambda_n}\big] \leq \tilde C.
    \end{align*}
\end{claim}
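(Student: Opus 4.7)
I will derive a uniform pointwise elliptic inequality of the form $(-\Delta + m^2)u_n \leq C$ for $u_n(x) := \E_{m^2}[\psi(x) \mid \Omega^{(-\infty, R)}_{\Lambda_n}]$ and then conclude via the discrete maximum principle, carried out in a finite-volume Dirichlet approximation. Throughout I write $\nu_n := \P_{m^2}[\cdot \mid \Omega^{(-\infty, R)}_{\Lambda_n}]$ (so $u_n(x) = \E_{\nu_n}[\psi(x)]$). The plan rests on two preliminary observations. First, by Proposition \ref{prop:FKG_condGFF} and FKG monotonicity (conditioning on the increasing event $\Omega^{(-\infty, R)}_{\Lambda_n}$) one has $u_n(x) \geq \E_{m^2}[\psi(x)] = 0$ on all of $\Z^d$. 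Second, since $\nu_n$ is the restriction of the Gaussian $\P_{m^2}$ to the convex set $\{\psi(y) \geq R,\ \forall y \in \Lambda_n\}$, the Brascamp--Lieb inequality applied to the log-concave measure $\nu_n$ yields the uniform variance bound $\mathrm{Var}_{\nu_n}[\psi(y)] \leq G_{\Z^d, m^2}(y, y) \leq C_1(m^2, d)$.

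\paragraph{Derivation of the recursive inequality.} For $x \in \Lambda_n$ the one-site Markov property of $\P_{m^2}$ gives that, conditionally on $\psi|_{\Z^d \setminus \{x\}}$, the random variable $\psi(x)$ has under $\nu_n$ the law of $\mathcal{N}(\mu_x, \sigma^2)$ truncated to $[R, \infty)$, with $\mu_x := \frac{1}{2d+m^2}\sum_{y \sim x}\psi(y)$ and $\sigma^2 := \frac{1}{2d+m^2}$. The elementary inverse Mills-ratio bound $\phi(\alpha)/(1-\Phi(\alpha)) \leq \alpha^+ + \sqrt{2/\pi}$, valid for all $\alpha \in \R$, combined with $R \geq 0$, yields
\begin{align*}
\E_{\nu_n}\bigl[\psi(x) \bigm| \psi|_{\Z^d \setminus \{x\}}\bigr] \leq \max(\mu_x, R) + \sigma \sqrt{2/\pi} \leq R + \mu_x^+ + \sigma \sqrt{2/\pi}.
\end{align*}
Integrating against $\nu_n$, using $\mu_x^+ \leq \frac{1}{2d+m^2}\sum_{y \sim x}\psi(y)^+$, and applying $\psi(y)^+ = \psi(y) + \psi(y)^-$ together with the bound $\psi(y)^- \leq |\psi(y) - u_n(y)|$ (valid since $u_n(y) \geq 0$) followed by Cauchy--Schwarz, one obtains $\E_{\nu_n}[\psi(y)^+] \leq u_n(y) + \sqrt{C_1}$. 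Altogether
\begin{align*}
u_n(x) \leq \frac{1}{2d+m^2}\sum_{y \sim x} u_n(y) + C' \qquad \text{for every } x \in \Lambda_n,
\end{align*}
with $C'$ depending only on $R$, $m^2$ and $d$. For $x \notin \Lambda_n$ no truncation is present and the same one-site computation gives $u_n(x) = \frac{1}{2d+m^2}\sum_{y \sim x}u_n(y)$. Rearranging, one arrives at $(-\Delta + m^2)u_n \leq (2d+m^2)C' =: C''$ on all of $\Z^d$.

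\paragraph{Maximum principle and main obstacle.} To legitimately apply the discrete maximum principle I pass to a finite-volume Dirichlet approximation: denote by $u_n^{(N)}$ the analogous conditional first moment when $\P_{m^2}$ is replaced by the $m^2$-massive Dirichlet GFF on a box $\Lambda_N \supset \Lambda_n$. The same derivation yields $(-\Delta+m^2)u_n^{(N)} \leq C''$ on $\Lambda_N$, together with the boundary condition $u_n^{(N)} \equiv 0$ on $\partial \Lambda_N$. Since $u_n^{(N)}$ is bounded and nonnegative on the finite set $\overline{\Lambda_N}$, its maximum is attained at some $x_0$, and the dichotomy ``$x_0 \in \partial \Lambda_N$, so $u_n^{(N)}(x_0) = 0$'' versus ``$(-\Delta u_n^{(N)})(x_0) \geq 0$, so $m^2 u_n^{(N)}(x_0) \leq C''$'' gives $\sup u_n^{(N)} \leq C''/m^2$ uniformly in $N$. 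Sending $N \to \infty$, standard weak-convergence arguments (together with uniform second-moment control from Brascamp--Lieb) yield $u_n^{(N)}(x) \to u_n(x)$ for each $x$, so $u_n(x) \leq \tilde C := C''/m^2$ uniformly in $n$ and $x$, which proves the claim. The main obstacle is the second paragraph: dominating $\E_{\nu_n}[\mu_x^+]$ by the linear quantity $\E_{\nu_n}[\mu_x] = \frac{1}{2d+m^2}\sum_{y \sim x}u_n(y)$ up to an $O(1)$ additive constant is the crux of the argument and crucially relies on the uniform variance bound provided by Brascamp--Lieb.
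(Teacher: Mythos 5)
Your argument is correct, but it takes a genuinely different route from the paper's. The paper bounds the conditioned mean by the entropy/tilting inequality \eqref{eq:cond_moment_aux_bound} imported from the proof of \cite[Lemma 4.7]{entropRep3+D}, then lower-bounds $\log \P_{m^2}\big[\Omega^{(-\infty,R)}_{\Lambda_{2n}}\big]$ via Slepian's lemma by $|\Lambda_{2n}|\log\P[Z_0\geq R]$, and controls the quadratic form $\big\langle \1_{\Lambda_n}, G_{\Z^d,m^2}\vert_{\Lambda_n\times\Lambda_n}^{-1}\1_{\Lambda_n}\big\rangle$ through the operator-norm bound $\lambda_{\max}\leq \sup_x\sum_y G_{\Z^d,m^2}(x,y)<\infty$; the mass enters through the exponential decay of the Green's function. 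You instead derive a pointwise discrete elliptic inequality $(-\Delta+m^2)u_n\leq C''$ from the one-site conditional law (the mean of a one-sided truncated Gaussian, via the inverse Mills-ratio bound, which is valid since that ratio is increasing with derivative strictly less than one), FKG positivity of $u_n$, and a Brascamp--Lieb variance bound, and you conclude by the maximum principle, where the mass enters through the zeroth-order term. Both routes give a constant depending only on $R,m^2,d$; yours in fact yields $\sup_n\sup_x u_n(x)\leq \tilde C$ directly (and makes the roughly linear dependence on $R$ explicit), whereas the paper obtains uniformity in $n$ only afterwards through FKG monotonicity. The price of your approach is a layer of standard but nontrivial bookkeeping that should be spelled out: Brascamp--Lieb for a Gaussian restricted to the hard-wall convex set (justified by smooth convex approximation of the indicator), the fact that the one-site computation is carried out rigorously in the finite-volume Dirichlet approximation rather than directly for the infinite-volume field (as you indeed do), and the $N\to\infty$ passage, which is routine because $\Omega^{(-\infty,R)}_{\Lambda_n}$ is a fixed local event of positive limiting probability whose boundary is null and the relevant second moments are uniformly bounded. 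In short, the paper externalizes the difficulty to the cited lemma and Slepian's inequality, while your argument is more self-contained and elementary at the cost of these verifications; none of them constitutes a gap.
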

Assuming the claim, we now prove \eqref{eq:main_existence_massive}. First note that for $x \in \Lambda_n$, $\E[\psi(x) ~\vert~ \Omega^{(-\infty,R)}_{\Lambda_n}] = \E[(\psi(x))_+ ~\vert~ \Omega^{(-\infty,R)}_{\Lambda_n}]$. Next, by the FKG property (Proposition \ref{prop:FKG_condGFF}) of $\P_{m^2}[\, \cdot \, \vert \Omega^{(-\infty, R)}_{\Lambda_n}]$, the map $n \mapsto \E_{m^2}[(\psi(x))_+ ~\vert~ \Omega^{(-\infty, R)}_{\Lambda_n}]$ is increasing. Together with Claim \ref{claim:limsup_moment}, these two observations imply that
\begin{align*}
    \sup_{x \in \Z^d}\sup_n \E_{m^2}\big [(\psi(x))_+ ~\big\vert~ \Omega^{(-\infty, R)}_{\Lambda_n} \big ] \leq \tilde C.
\end{align*} 

As for \eqref{eq:main_existence_massive}, when $I= (-\infty, b)$, by the FKG property of a massive GFF, since the events $\Omega^I_{\Lambda_n}$ and $ \Omega^I_{\Lambda_n}\cap\, \Omega^{(-\infty,R)}_{\Lambda_{2n}\setminus \Lambda_n}$ are increasing and $\psi \mapsto (\psi(x))_-$ is decreasing, we get that by choosing $C>0$ large enough,
\begin{align*}
    \Q_n[(\psi(x))_- ] \leq \E_{m^2}[(\psi(x))_-] \leq \sqrt{G_{\Z^d, m^2}(0,0)} \leq C/2.
\end{align*}

In the remaining cases ($\Q_n[(\psi(x))_+]$ for any $I$ in question, and $\Q_n[(\psi(x))_-]$ when $a,b$ are finite such that $|a|, |b|\leq R$), we again use the FKG inequality for a conditioned field (Proposition \ref{prop:FKG_condGFF}) and symmetry to obtain
\begin{align*}
    \E_{m^2}\big[(\psi(x))_+ ~\big|~ \Omega^I_{\Lambda_n}\big] &\leq \E_{m^2}\big[(\psi(x))_+ ~\big|~ \Omega^{(-\infty,R)}_{\Lambda_n}\big];\\
    \E_{m^2}\big[(\psi(x))_- ~\big|~ \Omega^I_{\Lambda_n}\big] &\leq \E_{m^2}\big[(\psi(x))_- ~\big|~ \Omega^{(-R,\infty)}_{\Lambda_n}\big] = \E_{m^2}\big[(\psi(x))_+ ~\big|~ \Omega^{(-\infty,R)}_{\Lambda_n}\big].
\end{align*}
Combined wit the above, this proves \eqref{eq:main_existence_massive} for the first definition of $\Q_n$. For the second definition of $\Q_n$, we analogously get
\begin{align*}
    \E_{m^2}\big[(\psi(x))_+ ~\big|~ \Omega^I_{\Lambda_n}, \Omega^{(-\infty,R)}_{\Lambda_{2n}\setminus \Lambda_n}\big] 
    &\leq \E_{m^2}\big[(\psi(x))_+ ~\big|~ \Omega^{(-\infty,R)}_{\Lambda_{2n}}\big];\\
    \E_{m^2}\big[(\psi(x))_- ~\big|~ \Omega^I_{\Lambda_n}, \Omega^{(-\infty,R)}_{\Lambda_{2n}\setminus \Lambda_n}\big] &\leq \E_{m^2}\big[(\psi(x))_- ~\big|~ \Omega^I_{\Lambda_n}, \Omega^{(-R, R)}_{\Lambda_{2n}\setminus \Lambda_n}\big] \\
    &\leq \E_{m^2}\big[(\psi(x))_- ~\big|~ \Omega^{(-R,\infty)}_{\Lambda_{2n}}\big] = \E_{m^2}\big[(\psi(x))_+ ~\big|~ \Omega^{(-\infty,R)}_{\Lambda_{2n}}\big].
\end{align*}
To be precise, to obtain the former pair of inequalities, we used the FKG property of $\P_{m^2} [\cdot \vert \Omega^I_{\Lambda_n}]$ and the facts that $\Omega^{(-\infty, R)}_{\Lambda_n}$ and $\psi \mapsto (\psi(x))_+$ are both increasing and $\Omega^I_{\Lambda_n} \cap \Omega^{(-\infty, R)}_{\Lambda_n} = \Omega^{(-\infty, R)}_{\Lambda_n}$ (similarly for $\Omega^{(-R, \infty)}_{\Lambda_n}$ and $\psi \mapsto (\psi(x))_-$). Analogously, the first inequality of the second pair follows from the FKG of $\P_{m^2}[\cdot | \Omega^I_{\Lambda_n}, \Omega^{(-\infty,R)}_{\Lambda_{2n} \setminus \Lambda_n}]$, while the remaining ones are a consequence of the FKG property of $\P_{m^2}[\cdot | \Omega^I_{\Lambda_n}, \Omega^{(-R,R)}_{\Lambda_{2n} \setminus \Lambda_n}]$.

To complete the proof in the scalar case, it only remains to prove Claim \ref{claim:limsup_moment}.
\begin{proof}[Proof of Claim \ref{claim:limsup_moment}]
    Following the steps of the proof of \cite[Lemma 4.7]{entropRep3+D}, 
    we arrive at
    \begin{align} \label{eq:cond_moment_aux_bound}
         \E_{m^2}\big[\psi(x) ~\big\vert~ \Omega^{(-\infty,R)}_{\Lambda_n}\big] \leq \sqrt{-2 \log \P_{m^2}\big[\Omega^{(-\infty,R)}_{\Lambda_{2n}}\big] ~/~ \big\langle \1_{\Lambda_{n}}, G_{\Z^d, m^2} \vert_{\Lambda_{n}\times \Lambda_{n}}^{-1} \1_{\Lambda_{n}} \big\rangle}.
    \end{align}
    Let $(Z_x)_{x \in \Z^d}$ be i.i.d. centered Gaussian variables of variance $G_{\Z^d, m^2}(0,0)$. Then, by Slepian's lemma, 
    \begin{align*}
        \P_{m^2}\big[\Omega^{(-\infty,R)}_{\Lambda_{2n}}\big] \geq \P[Z_x \geq R, \; \forall x \in \Lambda_{2n}] = \P[Z_0 \geq R]^{|\Lambda_{2n}|}.
    \end{align*}
    And hence,
    \begin{align*}
        \E_{m^2}[\psi(x) ~\vert~ \Omega^{(-\infty,R)}_{\Lambda_n}] \leq \sqrt{-2^{d+1} \log \P[Z_0 \geq R] \lambda_{\max}(G_{\Z^d, m^2} \vert_{\Lambda_{n}\times \Lambda_{n}})}.
    \end{align*} 
    To conclude, we show that $\lambda_{\max}(G_{\Z^d, m^2} \vert_{\Lambda_{n}\times \Lambda_{n}})$ is uniformly bounded by some $c(m^2, d) > 0$. To this end, recall that $G_{\Z^d, m^2}(x,y) \leq \tilde c e^{-c' |x-y|}$ for all $x,y \in \Z^d$ for appropriate constants $\tilde c(m^2, d), c'(m^2, d) > 0$—see, for instance, \cite[(1.3)]{Rod17}. In particular, we have $\sup_x \sum_{y\in \Z^d} G_{\Z^d, m^2}(x,y) \eqqcolon c(m^2, d) < \infty$. By Cauchy-Schwarz, 
    \begin{align*}
        \lambda_{\max}(G_{\Z^d, m^2} \vert_{\Lambda_{n}\times \Lambda_{n}}) &\leq \sup_{h \in \R^{\Lambda_n}: |h| = 1} \sum_{x, y \in \Lambda_n} h(x) G_{\Z^d, m^2}(x,y) h(y) \\
        &\leq \sup_{h \in \R^{\Lambda_n}: |h| = 1} \sum_{x, y \in \Lambda_n} h(x)^2 G_{\Z^d, m^2}(x,y) \leq c(m^2, d). 
    \end{align*}  
\end{proof}\vspace{-0.5 cm}
\end{proof}

We now turn to the vector-valued case.
\begin{proof}[Proof of Proposition \ref{prop:massiveGFF_existence} for $N\geq 2$]
    Let $\psi = (\psi^i(x))^{1 \leq i \leq N}_{x \in \Z^d}\sim \P_{m^2, N}$ be an $N$-vectorial $m^2$-massive GFF on $\Z^d$. By symmetry, tower property, and the FKG property of the scalar conditioned field (Proposition \ref{prop:FKG_condGFF}), we reduce to the above case: 
    \begin{align*}
        \E^{R, n}_{m^2,N} [|\psi(x)|] &\coloneqq \E_{m^2,N} \big[|\psi(x)| ~\big|~ |\psi(y)| \geq R, \; \forall y \in \Lambda_n\big] \leq 2N  \E^{R, n}_{m^2,N}\big [(\psi^1(x))_+\big] \\
        &\:= 2N \E^{R, n}_{m^2,N} \Big[  \E^{R, n}_{m^2,N}\big[(\psi^1(x))_+ ~\big|~ (\psi^j)_{j=2}^N\big] \Big] \\
        &\:= 2N \E^{R, n}_{m^2,N} \bigg[ \tilde\E_{m^2}\Big[(\tilde \psi(x))_+ ~\Big|~ |\tilde \psi(y)|^2 \geq R^2 - \sum_{j=2}^N (\psi^j(y))^2(\omega), \; \forall y \in \Lambda_n\Big] \bigg]\\
        &\:\leq 2N \E_{m^2} \big[(\psi(x))_+ ~\big|~ \psi(y) \geq R, \; \forall y \in \Lambda_n\big].
    \end{align*}
\end{proof}

\subsection{Proof of the uniqueness for small \texorpdfstring{$R$}{R} or \texorpdfstring{$I = (-\infty, b)$}{I = (-infinity, b)}}
We now address the uniqueness question and prove it in the relevant cases following the ideas in the proof of \cite[Proposition 1.7]{entropRep3+D}. The key tool is Dobrushin’s uniqueness criterion. To state it properly, for any $x \in \Z^d$ and any Gibbs state $\psi$ in $\G_1(\U^I)$, let
\begin{align*}
    p^{I, m^2}_x (\d \gamma ~\vert~ (\psi(y))_{y \neq x}) \propto e^{-\frac{2d + m^2}{2}\gamma^2 + \sum_{y: y \sim x} \gamma \psi(y)} \ind_{\{\gamma \notin I\}} \d \gamma
\end{align*}
denote the conditional law of $\psi(x)$ given $\F_{\{x\}^c} = \sigma(\psi(y): y\neq x)$. Observe that the measure $p^{I, m^2}_x (\d \gamma ~\vert~ (\psi(y))_{y \neq x})$ depends only on $$\tilde \psi(x) \coloneqq \sum_{y: y \sim x} \psi(y),$$ so, for brevity, we write $p^{I, m^2}_x(\d \gamma ~\vert~ \tilde \psi(x))$. 
Furthermore, since the law of $\psi$ belongs to $\G_1(\U^I)$ and by continuity, it follows that for every realization of $\psi$, $p^{I, m^2}_x(\d \gamma ~\vert~ \tilde \psi(x))$ has finite first moment. For $x,y \in \Z^d$, we can therefore define
\begin{align*}
    r_{m^2, I}(x, y) \coloneqq \sup\bigg\{ \frac{\W\big(p_x^{I, m^2}(\cdot | \tilde\psi(x)), p_x^{I, m^2}(\cdot | \tilde \phi(x))\big)}{|\psi(y) - \phi(y)|}: \psi, \phi \in (I^c)^{\Z^d} \text{ with } \psi(z) = \phi(z), \forall z\neq y\bigg\},   
\end{align*}
where $\W$ denotes the $1$-Wasserstein distance between two probability measures with finite first moments. In terms of these quantities, Dobrushin's uniqueness criterion can be stated as follows.
\begin{theorem}[Dobrushin's uniqueness criterion {\cite[Theorem 4 in Case 1]{Dobrushin}}] 
\label{thm:Dobrushin}
    If the following holds:
    \begin{align*}
        K(m^2, I) \coloneqq \sup_x \sum_{y: y \neq x} r_{m^2, I}(x,y) < 1,
    \end{align*}
    the set $\G_1(\U^I)$ contains at most one element.
\end{theorem}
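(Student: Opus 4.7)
The plan is to give the standard coupling proof of Dobrushin's criterion, adapted to our non-compact continuous-spin setting. Let $\mu, \nu \in \G_1(\U^I)$ and fix $C>0$ such that $\mu[|\psi(x)|]\vee\nu[|\psi(x)|] \leq C$ for every $x \in \Z^d$. First I would establish a telescoping inequality: for any $x \in \Z^d$ and any configurations $\psi, \phi \in (I^c)^{\Z^d}$ agreeing outside a finite set,
\begin{align*}
    \W\big(p^{I, m^2}_x(\cdot \vert \tilde\psi(x)), p^{I, m^2}_x(\cdot \vert \tilde\phi(x))\big) \leq \sum_{y : y\sim x} r_{m^2, I}(x, y) \, |\psi(y) - \phi(y)|,
\end{align*}
obtained by changing $\phi$ into $\psi$ one coordinate at a time along neighbours of $x$, and applying the triangle inequality for $\W$ together with the definition of $r_{m^2, I}(x, y)$. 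Only the $2d$ neighbours of $x$ contribute, since $p^{I, m^2}_x$ depends on the exterior only through $\tilde\psi(x)$.

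The core of the proof would be a coupled Gibbs sweep. Let $\Pi_x$ denote the Markov kernel that resamples coordinate $x$ from $p^{I, m^2}_x(\cdot\vert \tilde\psi(x))$; the DLR property gives $\mu\Pi_x = \mu$ and $\nu\Pi_x = \nu$. I would partition $\Z^d$ into the two sublattices $\Z^d_{\mathrm{even}}$ and $\Z^d_{\mathrm{odd}}$ of non-adjacent vertices, and starting from the product coupling $Q_0 \coloneqq \mu \otimes \nu$ (for which $\delta_{Q_0}(x) \coloneqq \E_{Q_0}[|\psi(x)-\phi(x)|] \leq 2C$ uniformly), define a sweep $Q \mapsto Q'$ that updates first all sites of $\Z^d_{\mathrm{even}}$ in parallel and then all sites of $\Z^d_{\mathrm{odd}}$ in parallel, at each site $x$ using the optimal $\W_1$-coupling of $p^{I, m^2}_x(\cdot\vert\tilde\psi(x))$ and $p^{I, m^2}_x(\cdot\vert\tilde\phi(x))$. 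The parallel updates are well-defined because, conditionally on the configuration of one sublattice, the updates on the other are independent (the interaction is nearest-neighbour, hence bipartite). The DLR equations ensure that $Q'$ still couples $\mu$ with $\nu$, and the telescoping bound together with convexity yields
\begin{align*}
    \sup_x \delta_{Q'}(x) \leq K(m^2, I) \, \sup_x \delta_Q(x).
\end{align*}
Iterating, $\sup_x \delta_{Q_n}(x) \leq K(m^2, I)^n \cdot 2C \to 0$, and since $\W_1(\mu\circ\psi(x)^{-1}, \nu\circ\psi(x)^{-1}) \leq \delta_{Q_n}(x)$ for every $n$, all single-site marginals of $\mu$ and $\nu$ coincide.

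Finally, applying the same coupling argument to Lipschitz observables on $(I^c)^F$ for finite $F \subset \Z^d$ upgrades single-site agreement to equality of all finite-dimensional distributions, and hence $\mu = \nu$. The main obstacle is making the infinite-volume sweep measurable and the associated contraction rigorous; this is precisely what the even/odd parallelisation resolves, by reducing each sweep to two product-measure updates. Membership in $\G_1(\U^I)$ enters twice: it ensures that $\delta_{Q_0}(x)$ is finite and uniformly bounded, and it guarantees that all Wasserstein distances appearing in the argument are finite. Alternatively, one could first perform the sweeps inside a finite box $\Lambda_n$, extract the contraction there, and then let $\Lambda_n \nearrow \Z^d$, with the uniform first-moment bound used to control the boundary contribution.
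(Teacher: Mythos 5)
Your proposal is correct in outline, but note that the paper does not prove this statement at all: it is imported verbatim from Dobrushin's work (\cite[Theorem 4 in Case 1]{Dobrushin}), which is stated precisely for non-compact spin spaces with the Kantorovich--Wasserstein interdependence coefficients $r_{m^2,I}(x,y)$ and a moment restriction on the Gibbs states --- this is exactly why the paper restricts to $\G_1(\U^I)$ rather than all of $\G(\U^I)$. What you give is the standard coupling proof of that classical result, and the skeleton is sound: the telescoping bound only involves the $2d$ neighbours of $x$ (so the ``agree outside a finite set'' restriction is harmless), the even/odd parallel sweep contracts $\sup_x\delta_Q(x)$ by a factor $K(m^2,I)$, the uniform first moments from $\G_1(\U^I)$ give $\delta_{Q_0}(x)\leq 2C<\infty$ and the finiteness of all Wasserstein distances, and the final bound $|\mu(f)-\nu(f)|\leq \sum_{x\in F}\mathrm{Lip}_x(f)\,\delta_{Q_n}(x)\to 0$ for local Lipschitz $f$ identifies $\mu=\nu$, since such functions are measure-determining. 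Two technical points deserve more than the one line you give them. First, the invariance $\mu\Pi_{\mathrm{even}}=\mu$ for the \emph{infinite} parallel update is not literally a DLR equation (those are for finite volumes); it follows because for even $x$ the kernel $p^{I,m^2}_x(\cdot\,\vert\,\tilde\psi(x))$ is measurable with respect to the odd spins, so applying DLR to a finite set of pairwise non-adjacent even sites and the tower property gives that, conditionally on $\F_{\mathrm{odd}}$, the even spins are independent with exactly these one-site laws; consistency over finite sets then yields invariance. Second, the sitewise use of optimal $\W_1$-couplings requires a measurable selection (or one uses $\varepsilon$-optimal couplings chosen measurably, or runs the sweep in finite boxes as you suggest); you flag this, and it is a standard fix. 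With those two points spelled out, your argument is a complete and self-contained proof of the criterion, whereas the paper's route is simply to quote Dobrushin; the trade-off is length versus transparency about where the uniform-first-moment hypothesis enters.
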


We can now prove the uniqueness part of Proposition \ref{prop:massiveGFF_uniqueness} in the case $N=1$. Recall that in this case we consider $I=(-\infty, b)$ or $I=(a,b) \subset (-R,R)$ for $R>0$ sufficiently small. 
\begin{proof}[Proof of the uniqueness part in Proposition \ref{prop:massiveGFF_uniqueness} for $N=1$] 
    We start by noticing that for all $n$ sufficiently large such that $x \in \Lambda_{n/2}$,
    \begin{align*}
        \P_{m^2} \big[\psi(x) \in \d \gamma ~\big|~ \Omega^I_{\Lambda_n}, \F_{\{x\}^c}\big] = p^{I, m^2}_x \big(\d \gamma ~\vert~ \tilde \psi(x)\big).
    \end{align*} 
    Thus, to complete the proof of the uniqueness in this case, by Proposition \ref{prop:massiveGFF_existence} and Theorem \ref{thm:Dobrushin}, it suffices to establish that $K(m^2,I) < 1$. 
    
    In order to bound $K(m^2,I)$ appropriately, we first note that for $y\not\sim x$, $r_{m^2, I}(x,y) = 0$. To treat $x\sim y$, observe furthermore that
    \begin{align*}
        \frac{\W\big(p_x^{I, m^2}(\cdot | \tilde \psi(x)), p_x^{I, m^2}(\cdot | \tilde \phi(x))\big)}{|\psi(y) - \phi(y)|} \leq \sup_{f: L(f) \leq 1}\sup_u \Big|\frac{\d}{\d u} p^{I, m^2}_x(f | u)\Big| \frac{|\tilde \psi(x) - \tilde \phi(x)|}{|\psi(y) - \phi(y)|},
    \end{align*}
    where the outer supremum is taken over Lipschitz functions with Lipschitz constant $L(f) \leq 1$. Therefore, to finish the proof, it is enough to show that
    \begin{align*}
        \sup_x \sup_{f: L(f) \leq 1}\sup_u \Big|\frac{\d}{\d u} p^{I, m^2}_x(f | u)\Big| < \frac{1}{2d}.
    \end{align*}
    To this end, let $g: \R \rightarrow \R$ be the identity function, then
    \begin{align*}
        \Big|\frac{\d}{\d u} p^{I, m^2}_x(f | u)\Big| &= \Big|\int f(\gamma) (\gamma - p^{I, m^2}_x(g | u)) p^{I, m^2}_x(\d \gamma | u)\Big| \\
        &= \Big|\int (f(\gamma) -  f(p^{I, m^2}_x(g | u)))(\gamma - p^{I, m^2}_x(g | u)) p^{I, m^2}_x(\d \gamma | u)\Big|\\
        &\leq \int (\gamma - p^{I, m^2}_x(g | u))^2 p^{I, m^2}_x(\d \gamma | u).
    \end{align*}
    Here we used the fact that $f$ is Lipschitz with $L(f) \leq 1$ and $\int \gamma p^{I, m^2}_x(\d \gamma | u) = p^{I, m^2}_x(g | u)$. Note that the expression in the last line is nothing else than the variance of $X \sim p^{I, m^2}_x(\cdot | u)$ and recall that $\Var[X] \leq \inf_v \E[(X - v)^2]$. Let $\tilde u \coloneqq u/(2d+m^2)$, then using the explicit expression for $p^{I, m^2}_x(\cdot | u)$ and basic transformations of integrals (substitutions and integration by parts formula), we arrive at 
    \begin{align*}
        \E[(X-v)^2] &=\frac{\int_{I^c} (\gamma - v)^2 e^{-\frac{2d+m^2}{2}(\gamma - \tilde u)^2} \d \gamma}{\int_{I^c} e^{-\frac{2d+m^2}{2}(\eta-\tilde u)^2} \d \eta}\\
        &= \frac{1}{2d+m^2} + (v-\tilde u)^2 
        \hspace{-0.5mm}+\frac{(b + \tilde u - 2v) e^{-\frac{2d+m^2}{2}(b-\tilde u)^2} - (a + \tilde u - 2v)e^{-\frac{2d+m^2}{2}(a-\tilde u)^2}}{(2d+m^2) \int_{I^c - \tilde u} e^{-\frac{2d+m^2}{2}\eta^2} \d\eta}.
    \end{align*}
    Set $M(q) \coloneqq e^{-\frac{2d+m^2}{2}q^2}/\int_{I^c - \tilde u} e^{-\frac{2d+m^2}{2}\eta^2} \d\eta$. The minimum of the above expression is attained at $v= \tilde u + (M(b - \tilde u) - M(a - \tilde u))/(2d+m^2)$, yielding 
    \begin{align*}
        \Var[X] &= \frac{1}{2d+m^2} \bigg( 1 + (b-\tilde u) M(b - \tilde u) - (a-\tilde u) M(a - \tilde u) - \frac{(M(b-\tilde u) - M(a-\tilde u))^2}{2d+m^2}\bigg)\\
        &\eqqcolon \frac{1}{2d+m^2} \Big(1 + \frac{L(a,b, \tilde u, m^2)}{2d+m^2}\Big).
    \end{align*}
    
    If $I = (-\infty, b)$, that is $a= -\infty$, the terms involving $a$ in the above expression are zero, leaving us with $L = M(b-\tilde u)((2d+m^2) (b-\tilde u) - M(b-\tilde u))$. We claim that $L \leq 0$, which implies that $\Var[X] \leq \frac{1}{2d+m^2} < \frac{1}{2d}$ as desired. Indeed, if $b - \tilde u\leq 0$, the claim is obvious; else we note that 
    $$(2d+m^2) (b-\tilde u)\int_{b-\tilde u}^\infty e^{-\frac{2d+m^2}{2}\eta^2} \d\eta \leq (2d+m^2)\int_{b-\tilde u}^\infty \eta e^{-\frac{2d+m^2}{2}\eta^2} \d\eta = e^{-\frac{2d+m^2}{2}(b-\tilde u)^2}.$$ 
    In the remaining case $a, b \in (-R, R)$, observe that 
    \begin{align*}
        \frac{L(a,b, \tilde u, m^2)}{2d+m^2} &\leq (b-\tilde u) M(b - \tilde u) - (a-\tilde u) M(a - \tilde u) \leq 2R \sup_q |(q M(q))'| \\
        &\leq \frac{2R}{\int_{I^c - \tilde u} e^{-\frac{2d+m^2}{2}\eta^2} \d\eta} \leq \frac{2R}{\int_{(-R, R)^c} e^{-\frac{2d+m^2}{2}\eta^2} \d\eta}
    \end{align*}
    is arbitrarily small for all $R>0$ small enough (depending on $m^2$). In particular, there is $R_0(m^2, d) > 0$ such that for all $0<R \leq R_0(m^2,d)$, $\Var[X] < \frac{1}{2d}$ as desired.
\end{proof}

Next, we consider the vectorial case $N\geq 2$.
\begin{proof}[Proof of Proposition \ref{prop:massiveGFF_uniqueness} for $N\geq 2$]
    The above proof for $N=1$ translates easily to the $N$-vectorial case  with $I = \B^N(0, R) \subset \R^N$ up to the following changes: for vectors $\gamma, \psi \in \R^N$, interpret $\psi \gamma$ and $\psi^2$ as the inner product on $\R^N$ of $\psi$ and $\gamma$ and $\psi$ with itself, respectively; substitute $g$ with the identity on $\R^N$ and view $\frac{\d}{\d u}$ as $\nabla_u$. As in the scalar case, it only remains to show that $V \coloneqq \inf_{v \in \R^N} \E[|X-v|^2] < \frac{1}{2d}$ for any sufficiently large (depending on $N$) mass and all sufficiently small $R$ depending on this mass (more precisely, on the ratio $\frac{N}{2d+m^2}$). To this end, notice that  
    \begin{align*}
        V \leq \E[|X-\tilde u|^2] &\leq \frac{\int_{\R^N} |\gamma|^2 e^{-\frac{2d+m^2}{2}|\gamma|^2} \d \gamma}{\int_{I^c} e^{-\frac{2d+m^2}{2}|\eta|^2} \d \eta} \leq \frac{N}{2d + m^2}\frac{(2\pi/(2d+m^2))^{N/2}}{(2\pi/(2d+m^2))^{N/2} - |\B^N(0,R)|} \\
        &= \frac{N}{2d+m^2} \bigg(1- \frac{(R^2 e (2d+m^2)/N)^{N/2}}{\sqrt{\pi N}}\bigg)^{-1},
    \end{align*}
    where we used Stirling's approximation to lower-bound $\Gamma(N/2+1)$ that appears in the volume of the $N$-dimensional unit ball. In the regime where $m^2$ satisfies $\frac{N}{2d+m^2} < \tfrac{1}{2d}$, we obtain, for all $R>0$ sufficiently small (depending on the ratio $(2d+m^2)/N$ and on $d$), that $V < \frac{1}{2d}$, as desired.
\end{proof}

\subsection{Non-uniqueness for \texorpdfstring{$I= (-R,R)$}{I=(-R,R)} with \texorpdfstring{$R$}{R} sufficiently large} 
We now prove that when $N=1$ and $I = (-R, R)$ with $R>0$ large enough, the set $\G_1(\U^I)$ contains more than one measure. To this end, set $\tilde \P_n[\cdot] \coloneqq \P_{m^2}[\cdot \vert \Omega^I_{\Lambda_n}]$ and $\Q_n[\cdot] \coloneqq \P_{m^2}[\cdot \vert \Omega^I_{\Lambda_n}, \Omega^{(-\infty,R)}_{\Lambda_{2n}\setminus \Lambda_n}]$. Recall from Section \ref{subsec:massive_tightness.moment} that both of these sequences are tight, and that any of their respective subsequential limits $\tilde \P$ and $\Q$ belong to $\G_1(\U^I)$. It therefore suffices to show that $\tilde \P \neq \Q$, which we now do.

\begin{proof}[Proof of the non-uniqueness part in Proposition \ref{prop:massiveGFF_uniqueness}]
It suffices to prove that as long as $R$ is sufficiently large, $\Q[\sigma_0] \neq 0 = \tilde \E[\sigma_0]$, where $\sigma_0 = \psi(0)/|\psi(0)|$ is the sign of the field at site $0 \in \Z^d$. Note that by symmetry, $\E_{\Q_n}[\sigma_0] \geq 0 = \tilde \E_n[\sigma_0]$. Moreover, by Proposition \ref{prop:FKG_condGFF} applied to $\tilde \P_n[\cdot \vert \psi(x) \geq R, \;\forall x \in \partial_{\text{out}} \Lambda_n]$ twice, we get
\begin{align*}
    \Q_n[\sigma_0] = 2\Q_n[\psi(0) > 0] - 1 \geq 2\tilde \Q_n[ \psi(0) > 0] - 1 = \tilde \Q_n[\sigma(0)],
\end{align*}
where $\tilde \Q_n[\cdot] = \tilde \P_n[\cdot \vert \psi(x) = R, \;\forall x \in \partial_{\text{out}} \Lambda_n]$. Hence, it in turn suffices to show that for all $R$ large enough, $\E_{\tilde \Q_n}[\sigma_0]$ is uniformly bounded away from zero. But observe that the law of $(\sigma(x))_{x \in \Lambda_n}$ given $(|\psi(x)|)_{x \in \Lambda_n}$ under $\tilde \Q_n$ is that of the nearest-neighbor Ising model on $\Lambda_n$ with plus boundary condition and conductances given by $|\psi(x)||\psi(y)| \geq R^2$ for $x\sim y$ such that $x$ or $y$ is in $\Lambda_n$ (cf. \cite[Proposition 2.3]{AGS}). Let us denote the latter law by $\P^{\text{Ising}, +}_{\Lambda_n, (|\psi(x)||\psi(y)|)_{x\sim y}}$, and write $\P^{\text{Ising}, +}_{\Lambda_n, R^2}$ for the usual Ising model on $\Lambda_n$ with plus boundary condition at inverse temperature $R^2$. Then, by \cite[Theorem 3.49 and Exercise 3.31]{StatMech_book},
\begin{align*}
    \tilde \Q_n[\sigma_0] = \tilde \Q_n\Big[\tilde \Q_n[\sigma_0 ~|~ (|\psi(x)|)_{x \in \Lambda_n}]\Big] = \tilde \Q_n\Big[\E^{\text{Ising}, +}_{\Lambda_n, (|\psi(x)||\psi(y)|)_{x\sim y}}[\sigma_0]\Big] \geq \E^{\text{Ising}, +}_{\Lambda_n, R^2}[\sigma_0].
\end{align*}
The latter quantity is uniformly (in $n$) bounded from below by $1-2\delta(R)$ with $\delta(R) \searrow 0$ as $R\rightarrow \infty$, as was shown using Peierls' argument in \cite[Section 3.7.2]{StatMech_book}. Thus, the proof of $|\G_1(\U^I)| > 1$ with $I=(-R,R)$ for all sufficiently large $R$ is complete.
\end{proof}


\bibliographystyle{alpha}
\bibliography{references}

\end{document}